\numberwithin{equation}{section}
\newtheorem{theorem}{Theorem}[section]
\newtheorem{definition}[theorem]{Definition}
\newtheorem{proposition}[theorem]{Proposition}
\newtheorem{lemma}[theorem]{Lemma}
\DeclareMathOperator{\R}{R}
\DeclareMathOperator{\Weyl}{Weyl}
 \newcommand{\<}{\left\langle}
\renewcommand{\>}{\right\rangle}
 \renewcommand{\(}{\left(}
\renewcommand{\)}{\right)}
\renewcommand{\[}{\left[}
\renewcommand{\]}{\right]}
\newcommand{\eps}{\varepsilon}
\newcommand{\e}{\varepsilon}
\begin{document}

\title[Towering phenomena for the Yamabe equation on symmetric manifolds]{Towering phenomena for the Yamabe equation on symmetric manifolds}

\author{Filippo Morabito}
\address{Filippo Morabito, KAIST 291 Daehak-ro Yuseong-gu, Daejeon, Republic of  Korea,   305-701 }
\email{morabitf@gmail.com}

\author{Angela Pistoia}
\address{Angela Pistoia, Dipartimento di Scienze di Base e Applicate per l'Ingegneria, Sapienza Universit\`a di Roma,
via Antonio Scarpa 16, 00161 Roma, Italy}
\email{angela.pistoia@uniroma1.it}

\author{Giusi Vaira}
\address{Giusi Vaira, Dipartimento di Scienze di Base e Applicate per l'Ingegneria, Sapienza Universit\`a di Roma,
via Antonio Scarpa 16, 00161 Roma, Italy}
\email{vaira.giusi@gmail.com}

\maketitle
 \begin{abstract}

Let $\(M,g\)$ be a compact smooth connected   Riemannian manifold (without boundary) of dimension $N\ge7$. 
Assume $M$ is symmetric with respect to a point $\xi_0$ with non-vanishing  Weyl's tensor. We consider
   the  linear perturbation of the Yamabe problem
$$(P_\epsilon)\qquad-\mathcal L_g u+\epsilon u=u^{N+2\over N-2}\ \hbox{in}\ (M,g) .$$ We prove that
for any $k\in \mathbb N$,  there exists $\eps_k>0$ such that for all $\eps\in (0, \eps_k)$ the problem $(P_\epsilon)$ has a symmetric  solution   $u_\eps ,$  which looks like the superposition of  $k$ positive bubbles centered at the  point $\xi_0$ as $\eps\to 0$.   In particular, $\xi_0$ is a {\em towering} blow-up point. 
 \end{abstract}
 {\bf Keywords}: Yamabe problem, linear perturbation, blow-up points

{\bf  AMS subject classification}: 35J35, 35J60

\section{Introduction}
Let $\(M,g\)$ be a compact smooth connected   Riemannian manifold (without boundary) of dimension $N\ge3$.
 The  Yamabe conjecture claims that {\it the conformal class of the metric $g$ contains a metric with constant scalar curvature}.
  From a PDE's point of view, it turns to be equivalent to state that {\it the critical problem
  \begin{equation}\label{ya}
\mathcal L_g u+\kappa u^{N+2\over N-2}=0\ \hbox{in}\ M,\end{equation}
has a positive solution for some constant $\kappa.$}
   Here $\mathcal L_gu:= \Delta_g u-{N-2\over 4(N-1)}R_gu$ is the conformal laplacian, $\Delta_g$ is the Laplace-Beltrami operator and   $R_g$ is the scalar curvature of the manifold.
Indeed,   the scalar curvature of the metric $\tilde g= u^{4\over N-2} g$ (which belongs to the conformal class  of $g$) is equal to the constant ${4(N-1)\over N-2}\kappa.$\\

The Yamabe conjecture has been proved through the works of   Yamabe \cite{y}, Aubin \cite{a}, Trudinger \cite{t} and Schoen \cite{s1}.  Different proofs in low dimension, i.e. $N=3,4,5$
 and in the case
$(M, g)$
is locally conformally flat are given by Bahri and Brezis \cite{bb} and Bahri   \cite{ba}.\\

Once the question of existence was settled, it is natural to address the problem of uniqueness of the solution. Actually
the solution is unique  in the case of negative scalar curvature and it is unique (up to a constant factor)  in the case of zero scalar curvature, while  in the case of positive scalar curvature the uniqueness does not hold true anymore as it was showed by 
   Schoen in \cite{s2} and Pollack  in \cite{p} where examples of manifolds with 
  a large number of high energy solutions with high  {Morse} 
index were built. That is why a relevant part of the 
 the research work has been devoted to understand the structure of the set of the solutions.\\ 
 
In particular, Schoen  in his topics course at Stanford (see \cite{s3})  conjectured that   the set of solutions (in the positive case) is compact. 
It is important to note that
in the case of the round sphere $(\mathbb S^{N},g_0)$ the set of solutions is not compact as proved by  Obata in \cite{o}. 
Schoen's conjecture   turns out to be true when the dimension of the manifold satisfies $3\le N\le 24$ as it was shown by Khuri, Marques and Schoen \cite{kms})
(previous  results were obtained  by Schoen \cite{s4}, Schoen and  Zhang \cite{sz}, Li and  Zhu \cite{lz}, Li and  Zhang \cite{lzha}, Marques \cite{m} and Druet \cite{d1}),
while it is false when $N\ge25$ thanks to the examples built by  Brendle \cite{b} and Brendle and Marques \cite{bm}.
\\

From a PDE's point of view, the compactness issue is equivalent to establishing a priori estimates for solutions to the equation \eqref{ya}.
Therefore, 
to study the compactness of solutions to the Yamabe equation, it is crucial to establish sharp
estimates of blowing-up solutions  and in particular to find out their right asymptotic profile   near a blow-up point. 
 In particular, when the compactness holds all the possible blow-up points of a  sequence of solutions to  \eqref{ya}  must be {\em isolated} and {\em simple,} i.e. 
  around each blow-up point $\xi_0$ the solution can be approximated by a so called {\it standard bubble }
 $$u_n(x)\sim \alpha_N{ \mu_n^  {N-2\over 2}\over \(\mu_n^2+\(d_g(x,\xi_n\))^2 \)^{N-2\over 2}} \ \hbox{
 for some $\xi_n\to\xi_0$ and $\mu_n\to0.$}$$
Let us be more precise. 
 Let $u_n$ be a sequence of solutions to problem \eqref{ya}. We say that $u_n$  blows-up at a point $\xi_0\in M$ if    there exists $\xi_n\in M$ such that
 $\xi_n\to \xi_0\ \hbox{and}\ u_n(\xi_n)\to+\infty.$   $\xi_0$ is said to be a {\em blow-up point} for $u_n.$ 
 Blow-up points can be classified according to the definitions introduced by  Schoen  in \cite{s3}.
$\xi_0\in M$ is an {\em isolated blow-up point} for $u_n$ if there exists $\xi_n\in M$ such that
  $\xi_n$ is a local maximum of $u_n,$ 
 $\xi_n\to \xi_0,$ 
   $u_n(\xi_n)\to+\infty$ and
   there exist $c>0$ and $R>0$ such that $$0<u_n(x)\le c{1\over \(d_g(x,\xi_n)\)^{ {N-2\over2}}}\ \hbox{for any}\ x\in B(\xi_0,R).$$  
 Moreover, $\xi_0\in M$ is an  {\em isolated and simple blow-up point} for $u_n$ if  the function
    $$\hat u_n(r):=r^{N-2\over 2}{1\over |\partial B(\xi_n,r)|_g}\int\limits_{\partial B(\xi_n,r)}u_n d\sigma_g$$
    has a exactly one critical point in $ (0,R).$  
 \\
 
  Motivated by the previous consideration,  it is natural to ask if  
the  {\it linear perturbation} of the Yamabe problem
  \begin{equation}\label{Eq1}-\mathcal L_g u+\epsilon u=u^{N+2\over N-2},\ u>0,\ \hbox{in}\ (M,g) \end{equation}
 \begin{itemize}
\item[(i)]
 has    solutions  with one or more blow-up points    as $\epsilon\to0,$ 
 \item[(ii)] has blowing-up solutions   whose     blow-up  points are  not isolated, i.e.   {\em clustering} blow-up points,
 \item[(iii)] has blowing-up solutions   whose     blow-up  points are not  neither isolated nor simple, i.e.   {\em towering} blow-up points.
\end{itemize}

 Here we assume that  the first eigenvalue of   $-\mathcal L_g $ is positive and $\epsilon$ is a small parameter.   
  Concerning question (i), 
 Druet in \cite{d1} proved that equation \eqref{Eq1} does not have any blowing-up solution when    $\epsilon<0$ and   $N=3,4,5$
   (except when the manifold is conformally equivalent to the round sphere). It is completely open the case when the dimension is $N\ge6.$
   The situation is completely different when 
   $\epsilon>0.$  Indeed, if  $N=3$  no blowing-up solutions exist  as proved by  Li-Zhu  \cite{lz}, while if $m\ge4$  blowing-up solutions do exist  as shown by Esposito, Pistoia and Vetois in \cite{epv}. In particular, if the dimension $N\ge6$ and the manifold is not locally conformally flat, Esposito, Pistoia and Vetois built solutions which blow-up at non-vanishing stable critical points $\xi_0$ of the Weyl's tensor, i.e. $|\Weyl_g(\xi_0)|_g\not=0.$ 
   Recently, Pistoia and Vaira in \cite{pv} showed that 
    $\xi_0$ is a  {\it clustering}  blow-up point as soon as it is a  non-degenerate minimum point of the Weyl's tensor with non-vanishing Weyl's tensor.  This result  
   gives a positive answer to question (ii). We also quote the paper \cite{rv}, where  Robert and V\'etois     built solutions having clustering blow-up points for a special class of perturbed Yamabe type equation.  \\
   
   In this paper, we address question (iii) and we prove that, under some symmetry assumptions,  it is possible to build solutions to equation \eqref{Eq1} with {\em towering} blow-up points.
   More precisely, our result reads as follows.
  
  \begin{theorem}\label{torri}
 {Assume $(M,g)$   is symmetric with respect to a point $\xi_0$ and $|{\rm Weyl}_g(\xi_0)|_g\neq 0$}
Assume $N\geq 7.$ For any $k\in \mathbb N$,  there exists $\eps_k>0$ such that for all $\eps\in (0, \eps_k)$ the problem \eqref{Eq1} has a symmetric  solution   $u_\eps ,$  which looks like the superposition of  $k$ positive bubbles centered at the  point $\xi_0$ as $\eps\to 0$. In particular, $\xi_0$ is a towering blow-up point. 
\end{theorem}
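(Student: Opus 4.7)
\section*{Proof proposal}

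The plan is to construct the solution via a finite-dimensional Lyapunov--Schmidt reduction performed in the subspace of functions invariant under the symmetry fixing $\xi_0$. First I would choose as approximate solution a sum of $k$ standard bubbles all centered at $\xi_0$ but with concentration parameters arranged in a tower, namely
\begin{equation*}
W_\eps \;=\; \sum_{j=1}^k \mathcal U_{\mu_j,\sigma_j,\xi_0},
\end{equation*}
where each $\mathcal U_{\mu_j,\sigma_j,\xi_0}$ is the bubble $\alpha_N \mu_j^{(N-2)/2}/(\mu_j^2+d_g(\cdot,\xi_0)^2)^{(N-2)/2}$ modified by a conformal normal coordinate correction $\sigma_j$ so that it is an accurate solution of the linearized Yamabe equation near the scale $\mu_j$. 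I would impose the tower ordering $\mu_1\ll \mu_2\ll\cdots\ll\mu_k$ and parametrize $\mu_j = \Lambda_j\, \mu_1^{\beta_j}$ (or equivalently $\mu_j = d_j\,\eps^{\gamma_j}$) for exponents $\gamma_j$ to be fixed by the balance between the Weyl tensor contribution, the linear perturbation $\eps u$, and the bubble--bubble interactions. Symmetry with respect to $\xi_0$ forces any solution in the symmetric class to have all bubbles at $\xi_0$, so no translation parameters enter the reduction; only the scales $\mu_j$ are free.

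Next I would look for a solution of the form $u_\eps = W_\eps + \phi_\eps$, with $\phi_\eps$ in the subspace $K_{\mu,\sigma,\xi_0}^\perp$ of symmetric functions $L^2$-orthogonal to the dilation kernel $\Vj$ of the linearized operator at each $\mathcal U_{\mu_j,\sigma_j,\xi_0}$ (the translational kernel is absent thanks to the symmetry). The key technical step is an invertibility estimate for the linearized operator $-\mathcal L_g + \eps - \tfrac{N+2}{N-2} W_\eps^{4/(N-2)}$ on this orthogonal complement, uniform in the parameters, with a sharp weighted norm that sees each scale $\mu_j$ separately. This together with a contraction mapping argument on the nonlinear remainder produces $\phi_\eps=\phi_\eps(\mu_1,\ldots,\mu_k)$ whose norm is negligible with respect to the leading order terms in the energy expansion.

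Having solved the auxiliary equation, the problem reduces to finding critical points of the finite-dimensional reduced functional
\begin{equation*}
\mathcal J_\eps(\mu_1,\ldots,\mu_k) \;=\; J_\eps(W_\eps + \phi_\eps),
\end{equation*}
where $J_\eps$ is the energy associated with \eqref{Eq1}. The heart of the matter is the asymptotic expansion of $\mathcal J_\eps$: each bubble contributes the universal term, plus a Weyl curvature correction of order $|\Weyl_g(\xi_0)|_g^2\,\mu_j^4$ (for $N\ge7$), a linear perturbation term of order $\eps\mu_j^2$, and interaction terms with adjacent bubbles of order $(\mu_j/\mu_{j+1})^{(N-2)/2}$. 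Selecting the scales $\mu_j = d_j\,\eps^{\gamma_j}$ with exponents $\gamma_j$ that equalize, in a triangular fashion, the Weyl/linear terms for $\mu_1$ with the cascade of interaction terms $\mu_j/\mu_{j+1}$, one obtains
\begin{equation*}
\mathcal J_\eps(d_1,\ldots,d_k) \;=\; c_0 k + \eps^{\delta}\bigl[\Psi(d_1,\ldots,d_k) + o(1)\bigr],
\end{equation*}
for a suitable $\delta>0$ and an explicit function $\Psi$ with a nondegenerate critical point $(d_1^*,\ldots,d_k^*)\in(0,\infty)^k$. A stable critical point of $\Psi$ is preserved by the $o(1)$ perturbation, yielding the desired critical point of $\mathcal J_\eps$ and hence the solution $u_\eps$.

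The main obstacle, which I would attack first, is the expansion step: the $k$ bubbles share the same center, so the bubble interactions are much stronger than in a clustering configuration and must be computed to high enough precision to capture the correct powers $(\mu_j/\mu_{j+1})^{(N-2)/2}$ and to distinguish them from the Weyl contribution of each individual bubble. This requires carefully estimating integrals of products of bubbles of widely separated scales, controlling the effect of the metric through conformal normal coordinates, and checking that the correction $\sigma_j$ produces an error small enough at every scale simultaneously. Once this layered expansion is in hand, the reduced variational argument and the existence of a stable critical point $d^*$ follow in a standard way.
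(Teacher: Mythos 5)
Your overall framework matches the paper's: symmetric Lyapunov--Schmidt reduction, tower of bubbles at $\xi_0$ with corrected profiles, scales $\mu_j=d_j\,\eps^{\gamma_j}$ fixed by balancing Weyl, linear-perturbation, and interaction terms, and then a reduced finite-dimensional variational argument in $(d_1,\dots,d_k)$. However, two linked steps in your proposal would not go through as written.

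First, you treat the remainder as a single function $\phi_\eps$ solving one projected equation. The paper instead decomposes $\Phi_\eps=\sum_{\ell=1}^k\phi_{\ell,\eps}$ and solves a \emph{triangular} system of $k$ equations, arranged so that each $\phi_{\ell,\eps}\in\mathcal K_\ell^\perp$ depends only on $d_1,\dots,d_\ell$ and satisfies $\|\phi_{\ell,\eps}\|=O(\eps^{p\theta_\ell/2})$ with $\theta_\ell=2\bigl(\tfrac{N-2}{N-6}\bigr)^{\ell-1}$. This cascading structure is not a convenience but a necessity: the bubble at scale $\mu_\ell$ contributes to the reduced energy only at order $\eps^{\theta_\ell}$, and these orders $\eps^2\gg\eps^{\theta_2}\gg\cdots\gg\eps^{\theta_k}$ are wildly separated. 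With a single remainder of size $\eps^{p}$ (the size of $\phi_{1,\eps}$), the quadratic error $\|\phi_\eps\|^2$ is of order $\eps^{2p}$, which is enormous compared with $\eps^{\theta_\ell}$ for $\ell\ge2$; unless that error is shown to depend only on $d_1$ (which is exactly what the triangular decomposition guarantees), it swamps every term that is supposed to pin down $d_2,\dots,d_k$. The "weighted norm seeing each scale separately'' you invoke could in principle be a substitute, but you would still need to prove that the $d_\ell$-dependence of the error appears only at order $\eps^{\theta_\ell}$, which is precisely what the splitting delivers; without an argument in that direction, the reduction is not complete. The paper even flags this explicitly as the reason the method does not extend to non-symmetric settings, where the splitting property fails.

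Second, and as a direct consequence, the reduced energy is \emph{not} of the form $c_0k+\eps^\delta[\Psi(d_1,\dots,d_k)+o(1)]$ for any single exponent $\delta$. It has the layered form
\begin{equation*}
\widetilde J_\eps(d_1,\dots,d_k)=D_N+\eps^{2}\bigl[G_1(d_1)+o(1)\bigr]+\sum_{\ell=2}^k\eps^{\theta_\ell}\bigl[G_\ell(d_1,\dots,d_\ell)+o_\ell(1)\bigr],
\end{equation*}
with the $\theta_\ell$ pairwise distinct and $G_\ell$ depending only on $d_1,\dots,d_\ell$. A nondegenerate critical point of a single leading-order function $\Psi$ therefore cannot be invoked. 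The correct closing argument is an iterated (triangular) maximization: one shows that $G_1$ has a unique maximizer $d_1^*$, that $G_\ell(d_1^*,\dots,d_{\ell-1}^*,\cdot)$ has a unique maximizer $d_\ell^*$, and then that an interior maximizer of $\widetilde J_\eps$ on a suitable compact box converges to $(d_1^*,\dots,d_k^*)$, by comparing the values of $\widetilde J_\eps$ order by order in $\eps$. Without replacing your final step by an argument of this kind, the existence of the critical point $(d_1^\eps,\dots,d_k^\eps)$ is not justified. A minor point: the tower ordering should be $\mu_1\gg\mu_2\gg\cdots\gg\mu_k$ with the paper's normalization $\gamma_j=\bigl(\tfrac{N-2}{N-6}\bigr)^{j-1}-\tfrac12$ increasing in $j$, and the correction to the bubble is the function $V$ of Proposition~\ref{bubbleV} added at order $\mu_j^2$, rather than a conformal-normal-coordinate change of gauge; the latter is a legitimate alternative device but is not what carries the estimates here.
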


This result is new and it is in sharp contrast with what happens in the euclidean case. Indeed, let us consider the classical 
 Brezis-Nirenberg problem \cite{bn}
\begin{equation}\label{brni}
\left\{\begin{aligned}&-\Delta u+\epsilon u=u^{N+2\over N-2}\ &&
  \hbox{in}\ \Omega,\\ & u>0 \  && \hbox{in}\ \Omega, \\ &u=0 \  && \hbox{on}\ \partial\Omega\end{aligned}\right.\end{equation}
 where $\Omega\subset\mathbb R^N,$ $N\ge 3$  is   an open and bounded smooth domain.
 It is well  known that it possesses blowing-up solutions when $\epsilon<0$  is small enough and $N\ge4$ (see Han \cite{h}, Rey \cite{re1,re2} and Musso and Pistoia \cite{mp}).
 Actually,  
   all the possible  blow-up points of solutions to \eqref{brni}, when  $\epsilon<0$ and small enough, are isolated and simple, namely {\em clustering} and {\em towering} blow-up points are forbidden, as it was showed by Cerqueti
   in 	\cite{c} using the ideas of Li \cite{li2}.\\

The proof of our result  relies on a delicate finite  dimensional  Ljapunov-Schmidt reduction. As usual, we need to find a {\em good}   approximation of the solution and this is carried out in Section \ref{2}. The second step consists in    finding the remainder term and here a lot of technicalities are required because we need to split the error term into the sum of remainder terms of   different orders. Finally,  we estimate the reduced energy and again we need to be extreme careful because the leading terms appears at different orders.
 All the  proofs  of the results are postponed to the Appendix \ref{appA}, while the main steps of the reduction and the proof of Theorem \ref{torri} are given in Section \ref {3}. Section \ref{1} is devoted to exhibit examples of symmetric manifolds with non-vanishing Weyl's tensor.
 \\
 
 Finally, we conjecture that Theorem \ref{torri} is  true even if we drop the symmetry assumption provided that $\xi_0$ is a non-degenerate critical point of Weyl's tensor with non-vanishing Weyl's tensor.
 The fact that the manifold is symmetric with respect to the point $\xi_0$ simplifies considerably the proof. Indeed, we are lead to build solutions which are symmetric with respect to  the point $\xi_0,$ so that in the reduction argument we only need to take care of the concentration parameters (all the bubbles are centered at the same point $\xi_0$). We point out that our proof cannot be adapted to the general case because 
the presence of   different points where the bubbles are centered would not  allow to split the error into the sum of terms with the required properties  (in particular,   property (i) of Proposition \ref{phij}  would not be true anymore).

\section{Examples of compact symmetric manifolds with non-vanishing Weyl tensor} \label{1}

\subsection{Riemannian manifolds which are symmetric with respect to a point}

We recall that if $M$ is a compact Riemannian manifold then it is complete.
Consequently for any $p \in M,$ the exponential map $exp_{p}$ is defined on the entire tangent space $T_pM$
and any geodesic curve is defined on $R.$	 Furthermore, for any point $q \in M,$ the distance 
of $q$ to $p$ equals the
length of a piece of the unique geodesic curve joining $p$ and $q.$

\begin{definition}
\label{symmetry.1}
A Riemannian manifold $M$ is symmetric with respect to a point $p$
if there exists an isometry $H:M \to M,$ such that 
$H(p)=p$ and  $dH_p : T_pM \to T_pM $ satisfies $dH_p=-id_{T_pM}.$
\end{definition}

We observe that a geodesic curve $\gamma:R \to M,$ with $\gamma(0)=p$ and initial velocity 
vector $v\in T_p M,$ 
can be written as $\gamma(t)=exp_p(tv).$ If $H$ is an isometry, then it 
always holds true that $$H(\gamma(t))=H(exp_p(tv))=exp_{H(p)}(dH_p(tv)).$$
If in addition $M$ is symmetric with respect to $p$ and $H$ satisfies the conditions of previous definition, then
\begin{equation}
\label{reverse}
H(\gamma(t))=exp_p(-tv)=\gamma(-t).
\end{equation}

Consequently, an equivalent definition is the following. 

\begin{definition}
\label{symmetry.2}
$M$ is symmetric with respect to a point $p \in M$ if there exists an isometry
$H:M \to M,$  such that $H(\gamma(t))=\gamma(-t)$ for any 
 geodesic curve $\gamma:R \to M$ such that $\gamma(0)=p.$
In other terms the isometry $H$ reverses the geodesic curves passing by 
the point $p.$
\end{definition}

If we set $t=1$ in \eqref{reverse} then we get that the image of $exp_p(v)$ under the action of $H$ is $H(exp_p(v))=exp_p(-v),$ 
for any $v \in T_pM.$ 
%in the domain of definition of $exp_p.$
Since any isometry preserves the length of curves and  $M$ is complete, then $d_g(p,exp_p(v))=d_g(p,exp_p(-v)),$
where $d_g$ denotes the distance with respect to the metric $g.$

An example of compact manifold which is symmetric with respect to a point is the unit
sphere $S^n=\{(x_1,\ldots,x_{n+1}) \in \R^{n+1}, \sum_1^{n+1} x_i^2=1\}$ equipped with the standard metric.
$S^n$ is symmetric with respect to any point $p \in S^n.$ 
We show that holds true in the case where $p$ coincides with the south pole $S$, the point having 
coordinates $(0,\ldots,0,-1).$ 

%Let $a,b$ the points of coordinates 
%$(x_1,\ldots,x_{n+1})$ and $(-x_1,-x_2,\ldots,-x_n,x_{n+1}).$
We define a map $H:R^{n+1}\to R^{n+1}$ setting $H(x_1,\ldots,x_{n+1})=(-x_1,-x_2,\ldots,-x_n,x_{n+1}).$ 
It is immediate to check this map is an isometry of $R^{n+1}.$ 
Consequently the restriction $h$ of this map to the sphere is an isometry of $S^n$ as well and it fixes the south pole $S.$
Furthermore its differential satisfies $dh_S=-id_{T_S S^n}.$

\subsection{Riemannian manifolds with non-vanishing Weyl tensor}

%A Riemannian manifolds is locally conformally flat if 

It is known that a $n$-dimensional Riemannian manifold is locally conformally 
flat if and only if the Cotton tensor vanishes identically in the case $n=3$
and if and only if the Weyl tensor vanishes identically in the case $n \geq 4.$
%if and only if  the Riemannian manifold is locally conformally flat.

Any space having constant sectional curvature is known to be locally conformally flat:
 $S^n(c)$ (the sphere of radius $\frac1{\sqrt{c}}$)    and the hyperbolic space $H^n(-c),$ $c>0,$ 
 have sectional curvature equal to $c,-c$    respectively.

A useful procedure to produce examples of Riemannian 
manifolds which are not locally conformally flat consists 
in considering the product or more generally the warped
 product of (eventually locally conformally flat) manifolds.

We start by recalling the definition of warped product of
two Riemannian manifolds $(B,g_B)$ and $(F,g_F).$

The {\it warped product }  $B \times_f F$ is the Riemannian manifold
$(B \times F,g),$ where $g =g_B \otimes f^2 g_F$
and $f : B \to \R$ is a positive function called warping function.

Theorem 1 in \cite{bgv1} provides the classification of the  warped products which are
locally conformally flat Riemannian manifolds. 

\begin{theorem}
 \label{warped}
We set $M:=B \times_f F.$

\begin{enumerate}
	\item If $dim(B)=1,$ then $M$ is locally conformally flat 
	if and only if $(F,g_F)$ is a space of constant sectional curvature.
	
	\item If $dim(B)>1,$ $dim(F)>1,$ then $M$ is locally conformally flat 
	if and only if the two following conditions are satisfied:
	
	\begin{itemize}
		\item $(F,g_F)$ is a space of constant curvature;
		
		\item the warping function $f$ defines a conformal deformation
		on $B,$ such that $(B, \frac1{f^2} g_B)$ has constant
		sectional curvature
		equal to $-c_F.$
	
	\end{itemize}
	
	\item If $dim(F)=1,$ then $M$ is locally conformally flat 
	if and only if the warping function $f$ defines a conformal deformation
		on $B,$ such that $(B, \frac1{f^2} g_B)$ has constant sectional curvature.
\end{enumerate}

\end{theorem}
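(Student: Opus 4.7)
The strategy I would pursue rests on a single conformal-invariance trick together with the classical classification of locally conformally flat Riemannian products. The warping function $f=f(b)$ depends only on the base coordinate, so the warped metric $g=g_B+f^2g_F$ satisfies
\[
\tfrac{1}{f^2}\,g \;=\; \tfrac{1}{f^2}\,g_B \;+\; g_F,
\]
which is precisely the Riemannian product metric on $(B,\tilde g_B)\times(F,g_F)$ with $\tilde g_B:=f^{-2}g_B$. Local conformal flatness is a conformally invariant property (it is characterized by the vanishing of the Weyl tensor in dimension $\geq 4$ and of the Cotton tensor in dimension $3$, both conformally invariant). Therefore $M=B\times_f F$ is locally conformally flat if and only if the Riemannian product $(B,\tilde g_B)\times(F,g_F)$ is locally conformally flat. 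This sidesteps any direct computation of the Weyl tensor of the warped product.

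The second ingredient I would use is the classical characterization of locally conformally flat product manifolds. For a Riemannian product $(N_1,h_1)\times(N_2,h_2)$ with $\dim N_i=n_i$ one computes: if $n_1,n_2\geq 2$, the product is locally conformally flat iff both factors have constant sectional curvatures $c_1,c_2$ satisfying $c_1+c_2=0$; if exactly one factor is $1$-dimensional, the product is locally conformally flat iff the other factor has constant sectional curvature. The proof decomposes the Riemann tensor of the product as the (trivially extended) sum of the two Riemann tensors, writes down the Schouten tensor of the product from its Ricci and scalar curvatures, and checks that the Weyl tensor vanishes precisely under the stated curvature conditions; mixed components of the Weyl tensor force both sectional curvatures to be constant, while the traceless condition enforces the sign relation $c_1+c_2=0$.

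Plugging the conformal reduction into this classification yields the theorem case by case. If $\dim B=1$, then $(B,\tilde g_B)$ is $1$-dimensional and automatically has vanishing curvature, so the product is locally conformally flat iff $(F,g_F)$ has constant sectional curvature; this is part (1). Symmetrically, if $\dim F=1$, the product is locally conformally flat iff $(B,\tilde g_B)=(B,f^{-2}g_B)$ has constant sectional curvature; this is part (3). If $\dim B,\dim F\geq 2$, the product is locally conformally flat iff both factors have constant sectional curvatures with $c_B+c_F=0$, i.e.\ $(F,g_F)$ has constant sectional curvature $c_F$ and $(B,f^{-2}g_B)$ has constant sectional curvature $-c_F$; this is part (2). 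The condition that $f$ \emph{defines a conformal deformation on $B$} is tautologically built into the formula $\tilde g_B=f^{-2}g_B$ itself.

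The main obstacle is the clean derivation of the classical product classification, in particular tracking the sign relation $c_1+c_2=0$ through the explicit expansion of the Weyl (or Cotton) tensor on $(N_1,h_1)\times(N_2,h_2)$ and isolating the mixed block that forces both factors to have constant curvatures. Once that lemma is in place, the conformal change $g\mapsto f^{-2}g$ is bookkeeping and the three items fall out uniformly from a single argument, which is the conceptual payoff of the reduction.
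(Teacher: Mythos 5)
The paper does not prove Theorem~\ref{warped}; it is quoted verbatim as Theorem~1 from the reference \cite{bgv1}, so there is no ``paper's own proof'' to compare against. On its own merits your argument is correct and, in fact, is the natural route to the result. The observation that dividing the warped metric $g=g_B\oplus f^2g_F$ by the base function $f^2$ yields the genuine Riemannian product metric $f^{-2}g_B\oplus g_F$, combined with the conformal invariance of local conformal flatness (vanishing Weyl tensor in dimension $\geq 4$, vanishing Cotton tensor in dimension $3$, both conformally invariant), reduces the warped-product classification cleanly to the classical classification of locally conformally flat Riemannian products; applying that lemma case by case gives exactly items (1), (2), (3). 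This conformal rescaling is the standard device for such statements and is presumably also what underlies the proof in \cite{bgv1}.

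One small imprecision worth flagging in your sketch of the product lemma: for a product $(N_1,h_1)\times(N_2,h_2)$ with $n_1,n_2\geq 2$, it is the purely mixed components of type $W(X,Z,Y,W)$ with $X,Y\in TN_1$, $Z,W\in TN_2$ that force each factor to be Einstein and yield the relation between the Schouten tensors (hence $c_1+c_2=0$ once constancy of curvature is established); the same-block components $W|_{TN_1}$ and $W|_{TN_2}$ are what force each factor to have \emph{constant} sectional curvature, not merely Einstein. You attribute constancy to the mixed block and the sign relation to ``the traceless condition,'' which is slightly misallocated, though it does not affect the final statement of the lemma or the validity of the reduction.
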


If in the definition of warped product we allow the warping function $f$
to be defined on the whole set $B \times F,$ then we get the definition of
{\it twisted product} of  $(B,g_B)$ and $(F,g_F).$ 

A necessary condition for a twisted product to be locally conformally flat 
is provided by the following theorem (Theorem 6 in \cite{bgv1}) :

\begin{theorem}
Suppose $dim(B)>1,$ $dim(F) >1. $
If the twisted product $B \times_f F$ is a locally conformally flat manifold then
it can be expressed as warped product.
\end{theorem}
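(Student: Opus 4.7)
The plan is to derive strong restrictions on the twisting function $f$ by combining the curvature identities of a twisted product with vanishing of the Weyl tensor, and conclude that $\log f$ must separate into a sum of a function on $B$ and a function on $F$. Set $\phi := \log f$ and denote by $X, Y, Z$ vector fields on $M = B \times_f F$ lifted from $B$ (horizontal), and by $U, V, W$ vector fields lifted from $F$ (vertical). Standard formulas give the Levi-Civita connection of the twisted metric $g = g_B \oplus f^2 g_F$ in terms of $\nabla^{B}$, $\nabla^{F}$, and $d\phi$; in particular, the vertical leaves $\{b\}\times F$ are totally umbilical, with mean curvature vector essentially given by the vertical gradient of $\phi$.

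First, I would write out the mixed Riemann curvature components of the twisted product. The component $R_M(X,Y,Z,V)$ with three horizontal and one vertical argument is the crucial one: a direct computation with the twisted connection produces terms proportional to $\mathrm{Hess}\,\phi(X,V)$ and $\mathrm{Hess}\,\phi(Y,V)$ paired with horizontal metrics, with no contribution purely from $R_B$ or $R_F$. On the other hand, since $\dim M \geq 4$ and the Weyl tensor vanishes, the Riemann tensor admits the Schouten decomposition
\begin{equation*}
R_M(X_1,X_2,X_3,X_4) = g(X_1,X_3)P(X_2,X_4) - g(X_1,X_4)P(X_2,X_3) + g(X_2,X_4)P(X_1,X_3) - g(X_2,X_3)P(X_1,X_4),
\end{equation*}
where $P = \tfrac{1}{n-2}\bigl(\mathrm{Ric}_M - \tfrac{R_M}{2(n-1)} g\bigr)$. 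Evaluating this expression on three horizontal vectors $X,Y,Z$ and one vertical $V$, and using $g(X,V)=g(Y,V)=g(Z,V)=0$, the right-hand side collapses to $g_B(X,Z)\,P(Y,V) - g_B(Y,Z)\,P(X,V)$. Equating the two expressions for $R_M(X,Y,Z,V)$ and then tracing in $X$ and $Z$ (which requires $\dim B > 1$, so that one can pick $Z$ orthogonal to $X$), one isolates $\mathrm{Hess}\,\phi(X,V)$. A parallel argument with the roles of horizontal and vertical interchanged, now using $\dim F > 1$, gives the symmetric conclusion. Thus $\mathrm{Hess}\,\phi(X,V) = 0$ for every horizontal $X$ and vertical $V$, and in product coordinates $(b^i,q^\alpha)$ the connection formulas reduce this to $\partial_i \partial_\alpha \phi = 0$. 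Integrating yields $\phi(b,q) = \alpha(b) + \beta(q)$, whence $f(b,q) = e^{\alpha(b)}\,e^{\beta(q)}$, and absorbing $e^{2\beta}$ into the metric on $F$ by the replacement $g_F \rightsquigarrow e^{2\beta}g_F$ rewrites the twisted product as the warped product $B \times_{e^{\alpha}} (F, e^{2\beta}g_F)$.

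The main obstacle I expect is isolating $\mathrm{Hess}\,\phi(X,V)$ from the contracted Weyl-vanishing identity: one has to show that the coefficient of $\mathrm{Hess}\,\phi(X,V)$ in the resulting scalar equation is actually non-zero, which amounts to checking that the Schouten-side contribution $P(X,V)$ does not itself secretly contain a full copy of $\mathrm{Hess}\,\phi(X,V)$ that would cancel the geometric side. This requires expanding the Ricci tensor of the twisted product in terms of $\mathrm{Ric}_B$, $\mathrm{Ric}_F$, $\Delta^{B}\phi$, $\Delta^{F}\phi$, and $|\nabla\phi|^2$, and verifying that its mixed horizontal–vertical block is a \emph{multiple} (with non-unit coefficient) of $\mathrm{Hess}\,\phi(X,V)$. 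Both dimension hypotheses $\dim B > 1$ and $\dim F > 1$ enter exactly here: with a one-dimensional factor, there is no room to trace out the horizontal or vertical indices to produce an equation purely in $\mathrm{Hess}\,\phi(X,V)$, and the conclusion can fail — matching the sharpness of the hypothesis.
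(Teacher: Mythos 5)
Note that the paper does not prove this statement: it is quoted verbatim as Theorem~6 of \cite{bgv1} without proof, so your argument must stand on its own. Your global strategy --- use $\Weyl_M\equiv 0$ together with the Schouten decomposition to show that the mixed Ricci $\Ric_M(X,V)$ ($X$ horizontal, $V$ vertical) vanishes, then convert this into separation of $\log f$ via the twisted-product curvature formulas --- is the right one and does reach the conclusion, but the central computational claim on which you lean is wrong. In a twisted product the base slices $B\times\{q\}$ are totally geodesic: $\Gamma^\gamma_{ij}=-\tfrac12 g^{\gamma\delta}\partial_\delta g_{B,ij}=0$ because $g_{B,ij}$ is independent of the fibre variables. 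Hence $R_M(X,Y)Z$ is horizontal whenever $X,Y,Z$ are horizontal, and $R_M(X,Y,Z,V)=0$ for every vertical $V$; this component carries no $\mathrm{Hess}\,\phi$ terms at all, contrary to what you write. Fortunately that vanishing makes the argument simpler rather than breaking it: $\Weyl=0$ collapses the Schouten identity to $0=g_B(X,Z)P(Y,V)-g_B(Y,Z)P(X,V)$, and choosing $X=Z$ orthogonal to $Y$ (this is where $\dim B>1$ enters) gives $P(Y,V)=0$, hence $\Ric_M(Y,V)=0$ since $g(Y,V)=0$.

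Your account of the role of $\dim F>1$, and the final reduction to $\partial_i\partial_\alpha\phi=0$, are also off. The hypothesis $\dim F>1$ does not enter through a ``parallel'' trace over vertical indices; it enters through the twisted-product Ricci formula
\begin{equation*}
\Ric_M(\partial_i,\partial_\alpha)=(1-s)\,\partial_i\partial_\alpha\log f,\qquad s=\dim F,
\end{equation*}
in product coordinates $(b^i,q^\alpha)$. One needs $s>1$ so that $1-s\neq0$, and only then does $\Ric_M(X,V)=0$ give $\partial_i\partial_\alpha\log f=0$, i.e.\ $\log f=\alpha(b)+\beta(q)$, whence $f=e^{\alpha}e^{\beta}$ and the twisted product becomes a warped product after replacing $g_F$ by $e^{2\beta}g_F$, exactly as you describe. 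Your worry about the Schouten side ``secretly cancelling'' the geometric side is moot because the geometric side is identically zero; the real non-degeneracy to check is the factor $1-s$ in the Ricci formula. Finally, the $g$-Hessian of $\phi=\log f$ does \emph{not} reduce to the bare mixed partial: since $\Gamma^\gamma_{i\alpha}=(\partial_i\phi)\delta^\gamma_\alpha\neq0$, one has $\mathrm{Hess}^g\phi(\partial_i,\partial_\alpha)=\partial_i\partial_\alpha\phi-(\partial_i\phi)(\partial_\alpha\phi)$, and the object that actually equals $(1-s)\partial_i\partial_\alpha\phi$, and which the Weyl condition controls, is $\Ric_M$, not $\mathrm{Hess}^g\phi$.
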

 
It is easy to check that a twisted product can be regarded as a warped product 
if and only if $f$ is the product of two functions $f_1,f_2,$ the first being defined 
on $B,$ the second being defined on $F.$  
If such a condition is not satisfied the twisted product is not locally conformally flat. 

The third class of manifolds we consider is the one which consists in 
multiply warped products.

Given the Riemannian manifolds 
$(B,g_B),$ $(F_i,g_{F_i}),$ with $i\in \{1,\ldots,k\},$ $k\geq 2,$
and $g_R$ the euclidean metric,
then their multiply warped product  $B \times_{f_1} F_1 \times_{f_2} F_2 \times \cdots \times_{f_k} F_k$ is the Riemannian 
manifold $(B \times F_1 \times F_2 \times \cdots \times F_k ,g),$
with $g=g_R \otimes f^2_1 g_{F_1} \otimes \cdots \otimes f^2_k g_{F_k}$
and $f_i : F_i \to \R$ is a positive function.

We will also assume that $f_i$ is non-constant and the spaces $F_i$ are different.

Lemma 3.1, Remark 3.5 (see also the considerations done at page 210) in \cite{bgv2} 
provide some necessary conditions for a  multiply warped product to be a locally conformally flat
 manifold. We mention only the following result.

\begin{lemma}
\label{necessary.conditions}
If $(B \times F_1 \times F_2 \times \cdots \times F_k ,g)$ is locally conformally flat then $k\leq b+2,$
where $b=dim(B),$ $(B,g_B)$ is locally conformally flat and $(F_i,g_{F_i})$ have constant sectional curvature (provided $dim(F_i)\geq 2 $).
\end{lemma}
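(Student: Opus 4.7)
The plan is to impose local conformal flatness directly on the multiply warped metric $g = g_R \oplus f_1^2 g_{F_1} \oplus \cdots \oplus f_k^2 g_{F_k}$ by computing its Weyl tensor in a frame adapted to the product decomposition, then reading off the three conclusions from the vanishing of the distinct algebraic blocks. Working with horizontal vectors $X,Y \in TB$ and vertical vectors $U_i, V_i \in TF_i$, the standard multiply warped product formulas (as in O'Neill or Bishop--O'Neill) express the Riemann tensor as a sum of four kinds of pieces: the intrinsic curvatures of the factors; terms of the form $\mathrm{Hess}_B(f_i)/f_i$ in the mixed $TB$--$TF_i$ directions; terms in $|\nabla f_i|_B^2/f_i^2$ inside each pure $TF_i$ block; and cross terms in $\langle \nabla f_i, \nabla f_j\rangle_B/(f_i f_j)$ coupling $TF_i$ with $TF_j$ for $i\neq j$. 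Tracing these yields the Ricci and scalar curvature, and hence the Schouten and Weyl tensors in block form.

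I would first isolate the piece of $\mathrm{Weyl}(g)$ whose four arguments all lie in a single $TF_i$ with $\dim F_i \geq 2$. After the Schouten subtraction, this piece reduces to the Weyl tensor of $F_i$ plus a scalar multiple of the Kulkarni--Nomizu square of $g_{F_i}$. Its vanishing, for each such $i$, forces $F_i$ to be a space of constant sectional curvature. Analogously, the block with all four arguments in $TB$ reduces to the Weyl tensor of $B$, whose vanishing gives that $(B, g_B)$ is locally conformally flat (automatic here since $g_R$ is Euclidean).

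Next I would examine the mixed blocks. The $(TB, TF_i, TB, TF_i)$ piece forces $\mathrm{Hess}_B(f_i)$ to be pure trace, i.e.\ proportional to $g_B$; equivalently, $\ln f_i$ is a concircular potential on $B$, so that $\nabla \ln f_i$ is a closed conformal gradient. The cross pieces $(TF_i, TF_j, TF_i, TF_j)$ for $i\neq j$ become algebraic relations among the $f_i, f_j$ and the inner products $\langle \nabla f_i, \nabla f_j\rangle_B$, imposing orthogonality and independence-type constraints on the $k$ gradients $\nabla \ln f_i$ inside $TB$. The hypotheses that each $f_i$ is non-constant and the $F_i$ are mutually distinct prevent degenerate solutions.

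For the bound $k \leq b+2$ I would invoke the fact that on a simply connected open set of a locally conformally flat manifold of dimension $b$, the space of concircular potentials has dimension at most $b+2$: it is spanned by the coordinate functions together with $|x|^2$ and the constants, which correspond to the translations, dilation and inversion generators of the conformal algebra of $\mathbb{R}^b$. Since the mixed-block compatibility together with non-constancy forces the $\ln f_i$ to be linearly independent in this $(b+2)$-dimensional space, one obtains $k \leq b+2$. The main obstacle I anticipate is exactly this last step: ruling out hidden linear dependencies between the $\ln f_i$ requires combining the Hessian equations and all the cross-gradient relations simultaneously to pin down the precise count, and keeping careful track of the Schouten corrections so that no spurious solutions of the vanishing-Weyl equations are missed.
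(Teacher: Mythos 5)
The paper does not actually prove this lemma: it is cited verbatim from Lemma~3.1 and Remark~3.5 of Brozos-V\'azquez, Garc\'\i a-R\'\i o and V\'azquez-Lorenzo (\cite{bgv2}), with the sentence ``We mention only the following result.'' So there is no in-paper argument to compare your sketch against; you are being asked to supply a proof of an imported fact.

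Your block decomposition of the Weyl tensor is the right starting point and readily gives the two softer conclusions (constant curvature of the $(F_i,g_{F_i})$ with $\dim F_i\geq 2$, local conformal flatness of the base). Two points in the middle step need tightening. First, the $(TB,TF_i,TB,TF_i)$ block does \emph{not} immediately force $\mathrm{Hess}_B f_i$ to be pure trace: what drops out of $\Weyl=0$ is the system
$\tfrac{\mathrm{Hess}_B f_i}{f_i}= -S\big|_{TB}-\rho_i\,g_B$,
where $S\big|_{TB}$ itself contains the weighted sum $\sum_j m_j\,\mathrm{Hess}_B f_j/f_j$ through $\Ric_g\big|_{TB}$. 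You must invert this coupled linear system (which works when $b\neq 2$, and uses $\Ric_B=0$ in the Euclidean case) before concluding that each $\mathrm{Hess}_B f_i$ is a scalar multiple of $g_B$. Second, once that is done it is $f_i$ itself that is concircular, not $\ln f_i$: $\mathrm{Hess}_B f_i=\phi_i\,g_B$ does not make $\mathrm{Hess}_B(\ln f_i)$ proportional to $g_B$, because of the extra $d\ln f_i\otimes d\ln f_i$ term. This is harmless for the dimension count (the concircular space $\{1,x_1,\dots,x_b,|x|^2\}$ still has dimension $b+2$), but the wording should be corrected.

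The substantive gap is exactly the one you flag at the end. The count $k\leq b+2$ requires the $k$ concircular potentials $f_1,\dots,f_k$ to be linearly independent in the $(b+2)$-dimensional space, and nothing in the block computations as presented forces this. The hypotheses ``$f_i$ non-constant'' and ``the $F_i$ are different'' rule out the most degenerate coincidences, and the cross identities $\langle\nabla f_i,\nabla f_j\rangle=\phi_i f_j+\phi_j f_i+c_{ij}$ (which follow from concircularity) together with the $(TF_i,TF_j)$ block of $\Weyl=0$ do give algebraic relations among the $f_i$, but you have not shown that a nontrivial linear relation $\sum a_i f_i=0$ contradicts them. Until that implication is established, the argument does not yield the bound. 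To close the gap you would need to carry out precisely the bookkeeping you describe: plug the concircular normal forms $f_i=\tfrac{\phi_i}{2}|x|^2+b_i\cdot x+c_i$ into all the remaining Weyl blocks and show that the resulting polynomial identities force the vectors $(\phi_i,b_i,c_i)\in\mathbb{R}^{b+2}$ to be independent; alternatively, consult the cited source, where this is done.
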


\subsection{Compact symmetric manifolds without boundary and non-vanishing Weyl tensor}

In this subsection we explain how to use warped products in order 
to produce examples of compact Riemannian manifolds of dimension at least $4,$ without boundary,
which are symmetric with respect to a point and have non-vanishing Weyl tensor.

First we explain under which conditions a warped product $M \times_f N$ is symmetric with
 respect to a point if $M,N$ are. Let $h_M,h_N$ denote the isometries which satisfy the conditions of Definition
\ref{symmetry.1}. 

\begin{lemma}
\label{symmetry.product}
We suppose $(M,g)$ is a Riemannian manifold which is symmetric with respect to $p \in M,$
and $(N,\tilde g)$ is a Riemannian manifold which is symmetric with respect to $q \in N,$
Then the warped product $(M \times N, G),$ with $G=g \otimes f^2 \tilde g,$ is 
 symmetric with respect to the point $(p,q) \in M \times N,$
if the warping function $f:M\to \R^+$ satisfies $f \circ h_M=f.$   In other terms
$f$ is invariant under the action of the isometry $h_M.$
\end{lemma}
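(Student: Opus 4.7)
The natural candidate for the required isometry is the product map $H: M \times N \to M \times N$ defined by $H(x,y) = (h_M(x), h_N(y))$. The plan is to verify that $H$ satisfies the three requirements of Definition~\ref{symmetry.1} at the point $(p,q)$: namely, $H$ fixes $(p,q)$, its differential acts as $-\mathrm{id}$ on $T_{(p,q)}(M\times N)$, and it is an isometry of the warped metric $G$.

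The fixed-point and differential conditions are immediate. Since $h_M(p)=p$ and $h_N(q)=q$, we have $H(p,q)=(p,q)$. Identifying $T_{(p,q)}(M\times N)$ with $T_p M \oplus T_q N$, the differential decomposes as $dH_{(p,q)} = dh_{M,p} \oplus dh_{N,q}$, and by hypothesis both summands equal $-\mathrm{id}$, so $dH_{(p,q)}=-\mathrm{id}_{T_{(p,q)}(M\times N)}$.

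The only nontrivial point is showing $H$ preserves $G$, and this is exactly where the condition $f\circ h_M = f$ enters. At a point $(x,y)$ and for tangent vectors $(u,v),(u',v') \in T_xM \oplus T_yN$, the warped metric reads
\begin{equation*}
G_{(x,y)}\bigl((u,v),(u',v')\bigr) = g_x(u,u') + f(x)^{2}\, \tilde g_y(v,v').
\end{equation*}
Pulling back by $H$ and using that $h_M$ is an isometry of $(M,g)$, that $h_N$ is an isometry of $(N,\tilde g)$, and that $f(h_M(x)) = f(x)$, one obtains
\begin{equation*}
G_{H(x,y)}\bigl(dH(u,v), dH(u',v')\bigr) = g_x(u,u') + f(x)^{2}\, \tilde g_y(v,v') = G_{(x,y)}\bigl((u,v),(u',v')\bigr),
\end{equation*}
proving $H$ is an isometry. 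I do not expect any real obstacle here: the argument is a direct verification, and the hypothesis $f\circ h_M = f$ is precisely what is needed to handle the conformal factor attached to the $\tilde g$-component. One might remark that in general one should not expect an analogous isometry to exist if $f$ were not $h_M$-invariant, since the $\tilde g$-component of the metric would get rescaled by $f(h_M(x))^2 \ne f(x)^2$ under $H$.
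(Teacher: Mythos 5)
Your proof is correct and follows essentially the same route as the paper: both define the product map $H=(h_M,h_N)$, verify it preserves the warped metric $G$ using the isometry of $h_M$, $h_N$ and the invariance $f\circ h_M=f$, and check that $dH_{(p,q)}=-\mathrm{id}$ via the direct-sum decomposition of the tangent space. No substantive difference.
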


\begin{proof}
The map $h:M \times N \to M\times N  ,$ defined as $\pi_M \circ h=h_M,$ $\pi_N \circ h=h_N,$ where
$\pi_M : M\times N \to M,$  $\pi_N : M\times N \to N,$  denotes the projections, is an isometry of $M \times N.$

In order to show that, we assume that 
\begin{itemize}
	\item $r \in M,$ $s \in N.$
\item $V,W \in T_{(r,s)}M \times N \cong T_{r}M \oplus T_{s}N,$  
\item $V_1,V_2$  are the projections of $V$ on $T_{r}M$ and $T_{s}N,$ 
\item $W_1,W_2$ are the projections of $W$ on $T_{r}M$ and $T_{s}N.$
\end{itemize}
$h$ is an isometry if it is a diffeomorphism (the proof of this is immediate) and
$$G_{h(r,s)}(dh_{(r,s)}(V),dh_{(r,s)}(W))=G_{(r,s)}(V,W).$$
By definition of the metric $G,$ the right hand side equals
%$$G_{h(r,s)}(dh_{(r,s)}(V),dh_{(r,s)}(W))=
$$g_{h_M(r)} ((dh_M)_r(V_1),(dh_M)_r (W_1))+
[f(h_M(r))]^2 \tilde g_{h_N(s)} ((dh_N)_s(V_2),(dh_N)_s (W_2)).$$
Using the fact that $h_M$ and $h_N$ are isometries and $f\circ h_M=f$, we can write that as:
$$ g_r(V_1,W_1)+ f^2(r) \tilde g_s(V_2,W_2)=G_{(r,s)}(V,W). $$

It remains to show that $dh_{(p,q)}$ coincides with the antipodal map on 
$T_{(p,q)}M \times N \cong T_{p}M \oplus T_{q}N.$
This follows from:
$dh_{(p,q)}=((dh_M)_{p},(dh_N)_{q})=(-id_{T_{p}M},-id_{T_{q}N})=-id_{T_{p}M \oplus T_{q}N}.$  
\end{proof}

The $n$-spheres $S^n$ are examples of  compact  manifolds which are symmetric with respect to a point,
but their Weyl tensor vanishes identically because they are locally conformally flat manifolds.
 We can obtain manifolds which are not locally conformally flat
if we take the product of at least two spheres.

The Riemannian manifold $(P,g_P)=(S^{n} \times S^{m},g_{S^{n}}\otimes f^2 g_{S^{m}})$  with $n,m \geq 2$ 
 is not locally conformally flat for any choice of the warping function $f$ which does not satisfy the hypotheses of Theorem \ref{warped}, part (2). 
In particular when $f$ is a constant function. 

Such manifolds are also symmetric with respect to a point  provided $f$ satisfies the condition
of Lemma \ref{symmetry.product}.

Alternatively, we can consider the twisted product of two spheres. 

Products of an higher number of spheres can be shown to be not locally conformally flat, writing it as a product of 
two manifolds and using induction. For example  
$ S^{l} \times S^{n} \times S^{m}$   equipped with the metric $g_{S^l}\otimes g_{S^n}\otimes g_{S^m}$ is not locally conformally flat,
because we can write it as product of $S_l$ and the manifold $P$ constructed above, with $f \equiv 1.$
Now we can use again Theorem \ref{warped}, because $(P,g_P)$ has non-constant sectional curvature.
The last assertion follows from the fact that if $P$ was a manifold with constant sectional curvature then it would be
locally conformally flat.

 Similarly, the multiply warped product $(M_k,g_k)=(S^{n} \times S^{m_1} \times  \cdots  S^{m_k}, g_{S^{n}}\otimes f_1^2 g_{S^{m_1}}\otimes  \cdots 
f_k^2 g_{S^{m_k}} ),$ $n,m_i \geq 2,$ $k\geq n+3,$
is not locally conformally flat for any choice of the warping functions $f_i,$ according to Lemma \ref{necessary.conditions}.

In order to study the symmetry,  we observe that by Lemma \ref{symmetry.product}
we  can show the symmetry $(M_k,g_k)$ arguing by induction.
The product $M_1$ is symmetric if $f_1 \circ h_0=f_1, $ where $h_0$ is the isometry of $S^n.$
The product $M_{i+1}$ is symmetric if $f_{i+1}\circ h_i=f_{i+1},$ where $h_i$ is the isometry of $M_i,$
with $i \in \{1,\ldots,k-1\}.$

Examples with same structure are those ones we get if we replace the spheres by other compact manifolds.
Let us consider the $n$-dimensional ellipsoids, $n\geq 2$, centered at the origin, 
$$\left\{(x_1,x_2,\ldots,x_{n+1})\in \R^{n+1}, \sum_1^{n+1}
\frac{x_i^2}{a_i^2}=1\right\},$$ with $a_i>0,$ 
endowed with the metric induced by the euclidean one. 
 Note that if the semi-axis length $a_i=r$ for any $i\in \{1,\ldots,n+1\},$ then we get the $n$-sphere of radius $r.$ 

A direct computation shows that an $n$-dimensional ellipsoid in $R^{n+1}$ equipped 
with the metric induced by the euclidean one,  is not locally conformally flat if $n\geq 3$ and at least three
of the semi-axis lengths $a_i$ are different. See also the Proposition by Cartan, Schouten in \cite{k}.

Using $m$-dimensional ellipsoids, $m\geq 2,$ for which at least two of the semi-axis 
lengths are different  (this hypothesis ensures that their curvature is not constant), then, in 
view of Theorem \ref{warped}, we can 
construct warped products which are not locally conformally flat.
As in the case of product of spheres, we can show  the symmetry of these examples using the
symmetry of each ellipsoid with respect to one of its vertices.  

Of course there are plenty of other examples, the ones we presented here are relatively easy to describe.

\subsection{Examples of symmetric manifolds with nowhere vanishing Weyl tensor}
In view of previous considerations, we already know that the product of sphere is
symmetric with respect to a point. We finish the section by showing that a product of spheres is
is example of compact Riemannian manifold which has nowhere vanishing 
Weyl tensor. 

The sphere $S^m$ equipped with the standard metric enjoys the following property: 
Isometries of $S^m$ act transitively, that is for each fixed pair of distinct points
$p,q \in S^m,$ there exists an isometry $H:S^m \to S^m,$ such that $H(p)=q.$
This property is clearly inherited by any product $S^{m_1} \times S^{m_2}.$ 

Now we assume $m_1,m_2 \geq 2.$ Since the Weyl tensor is preserved by isometries,
if the Weyl tensor vanishes at $(p_1,p_2) \in S^{m_1} \times S^{m_2}$  then it vanishes also
at the point $H(p_1,p_2),$  where $H$ is an isometry from $S^{m_1} \times S^{m_2}$ 
onto itself. Since $H(p_1,p_2)$ can be chosen arbitrarily this would show that the Weyl tensor
 vanishes at each point. That says $S^{m_1} \times S^{m_2}$ would be locally conformally
flat, and that contradicts Theorem \ref{warped}, part (2), with $f\equiv 1.$

The same proof applies to multiple products of spheres.

% but we will deduce this later in a simpler way. Now we observe only 
%two of the semi-axis lengths are different, say $a_j= a_k$ for 
%$j,k\neq 1,$ then the metric of the manifold takes the form
%$dr^2 \otimes f^2(r)g_{S^{n-1}},$ where $r$ is the distance of a point 
%to one of the vertices of coordinates $(\pm a_1,0\ldots,0).$ Being 
%the metric radial, in this case the ellipsoid is locally conformally flat.

\section{The ansatz}\label{2}

\subsection{Preliminaries}
We will assume that $M$ is symmetric with respect to a point $\xi$ with $|{\rm Weyl}_g(\xi )|_g\neq 0.$ We will also assume that $M$ has dimension $N\ge7.$\\

The main ingredient in our construction are the euclidean bubbles
\begin{equation}\label{bubble}U_{\mu,y}(x) = \mu^{-{N-2\over2}} U\({  x-y \over \mu}\) ,\ x,y\in\mathbb R^N,\ \mu>0,\ \hbox{where}\
 U (x):=\displaystyle{ {\alpha_N}{1\over\(1+|x |^2\)^{N-2\over2}}}.\end{equation} 
Here $\alpha_N:={N(N-2)}^{N-2\over4}.$ They are all the solutions to the  critical equation in the Euclidean space
  \begin{equation}\label{criti} -\Delta U=U^{N+2\over N-2} \  \hbox{in}\  \mathbb R^N.\end{equation}

Let us consider the euclidean bubble  $U_{\mu,0}$ , centered around the origin (see \eqref{bubble}),
which via a geodesic normal coordinate system around the point $\xi\in M $ read as
$$\mathcal U_{\mu,\xi}(z)= U_{\mu,0} \left( {\rm exp}_{\xi}^{-1}(z)    \right)=\mu^{-{N-2\over2}}U  \left(\frac{{\rm exp}_{\xi}^{-1}(z) }{\mu } \right)\ \hbox{if $d_g(\xi,z)$ is small enough.}$$
A comparison between the conformal laplacian   with the euclidean laplacian of the bubble shows that  there is an error, which at main order looks like
 \begin{equation}\label{conf-eu} \mathcal L_g \mathcal U_{\mu,\xi}   -\Delta \mathcal U_{\mu,\xi} \sim  -\frac 13 \sum\limits_{a,b,i,j=1}^NR_{iab j}(\xi)x_a x_b\partial^2_{ij}U_{\mu,0}+\sum\limits_{i, l,k=1}^N \partial_l \Gamma^k_{ii}(\xi)x_l\partial_k U_{\mu,0}+{\beta_N R_g(\xi)}U_{\mu,0}.\end{equation}
 Here $\beta_N:={N-2\over 4(N-1)}$, $ R_{iab j}$ denotes the Riemann curvature tensor,   $\Gamma^k_{ij}$ the Christoffel's symbols
 and $R_g$ the scalar curvature.
This  easily follows by standard properties of the exponential map, which imply 
\begin{equation}\label{lb}
-\Delta_g u = -\Delta u -(g^{ij}-\delta^{ij})\partial^2_{ij}u+g^{ij}\Gamma^k_{ij}\partial_k u, 
\end{equation}
with
\begin{equation}\label{gij}
g^{ij}(x)=\delta^{ij}(x) -\frac 13 R_{iab j}(\xi)x_a x_b +O(|x|^3)\ \hbox{and}\ 
g^{ij}(x)\Gamma^k_{ij}(x)=\partial_l \Gamma^k_{ii}(\xi)x_l +O(|x|^2).
\end{equation}
 To build our solution it shall be necessary    to kill the R.H.S of \eqref{conf-eu} by adding to the bubble an higher order term $V$   whose existence has been established in \cite{ep}. To be more precise, we need
 to remind (see \cite{be}) that the all the solutions to the linear problem
\begin{equation}\label{Eqlin}
-\Delta v=pU^{p-1}v\quad\hbox{in}\ \mathbb{R}^N,\end{equation}
are   linear combinations of the functions
\begin{equation}\label{Eq14}
\psi^0\(x\)=x\cdot\nabla U(x)+{N-2\over2} U(x)
\ \hbox{and}\
 \psi^i\(x\)=\partial_i U(x), \ i=1,\dots,N.
\end{equation}
The correction term $V$ is built in the following proposition (see  Section 2.2 in \cite{ep}).
 \begin{proposition}\label{bubbleV}
 There exist  $\nu(\xi)\in\mathbb R $ and a function $V\in   \mathcal{D}^{1,2}(\mathbb{R}^N) $ solution to
\begin{align}\label{EqV1}
  -\Delta  V-f'(U) V = & -\sum\limits_{a,b,i,j=1}^N{1\over 3} R_{iabj} (\xi)x_ax_b\partial^2_{ij} U
 +\sum\limits_{i, l,k=1}^N \partial_l\Gamma^k_{ii}(\xi)x_l\partial_kU +\beta_N\R_g(\xi) U+\nu(\xi)\psi^0 \ \hbox{in}\ \mathbb{R}^N ,
\end{align}
with
$$\displaystyle\int\limits_{\mathbb{R}^N}V(x)\psi^i(x)dx=0,\  i=0,1,\dots,N $$
and
\begin{equation}\label{V3}
|V(x)|+ |x|\left|\partial_kV(x)\right|+|x|^2 \left|\partial^2_{ij}V (x)\right| =O\({1\over \(1+|x|^2\)^{N-4\over2}}\),\quad x\in\mathbb{R}^N. \end{equation}
 \end{proposition}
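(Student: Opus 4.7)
My plan is to treat \eqref{EqV1} as a linear Fredholm problem on $\mathcal{D}^{1,2}(\mathbb{R}^N)$ for the operator $L := -\Delta - f'(U) = -\Delta - p U^{p-1}$ (with critical exponent $p = (N+2)/(N-2)$) and inhomogeneity
\begin{equation*}
F(x) := -\tfrac{1}{3}\sum_{a,b,i,j=1}^N R_{iabj}(\xi)\, x_a x_b\,\partial^2_{ij}U + \sum_{i,l,k=1}^N \partial_l\Gamma^k_{ii}(\xi)\, x_l\,\partial_k U + \beta_N R_g(\xi)\, U.
\end{equation*}
From \eqref{Eqlin}--\eqref{Eq14} we know $\ker L = \Span\{\psi^0,\psi^1,\ldots,\psi^N\}$, so the Fredholm alternative provides a unique solution $V\in(\ker L)^\perp$ of $LV = F + \nu(\xi)\psi^0$ exactly when the right-hand side is $L^2$-orthogonal to each kernel element.

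The orthogonality to $\psi^1,\ldots,\psi^N$ would be dispatched by a parity check. Since $U$ is radial, $\partial^2_{ij}U$ is invariant under $x\mapsto -x$ whereas $\partial_k U$ is odd; hence $x_a x_b\,\partial^2_{ij} U$ and $x_l\,\partial_k U$ are both even, so $F$ itself is even in $x$. Because $\psi^i$ is odd for $i=1,\ldots,N$ while $\psi^0$ is even, one obtains $\int_{\mathbb{R}^N} F\psi^i = 0$ and $\int_{\mathbb{R}^N} \psi^0\psi^i = 0$ for free. The last solvability condition then uniquely determines
\begin{equation*}
\nu(\xi) = -\frac{\int_{\mathbb{R}^N} F(x)\,\psi^0(x)\,dx}{\int_{\mathbb{R}^N} |\psi^0(x)|^2\,dx},
\end{equation*}
both integrals being absolutely convergent since $F(x), \psi^0(x) = O(|x|^{-(N-2)})$ at infinity and $N\geq 7$.

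For the pointwise decay \eqref{V3}, I would argue by asymptotic comparison via the Newtonian potential. Rewriting the equation as $-\Delta V = pU^{p-1}V + F + \nu(\xi)\psi^0$ and noting that $pU^{p-1} = O(|x|^{-4})$ is a perturbative coefficient at infinity while the inhomogeneity is $O(|x|^{-(N-2)})$, convolution with the fundamental solution of $-\Delta$ gains two powers and yields $V(x) = O(|x|^{-(N-4)})$. The corresponding bounds on $\partial_k V$ and $\partial^2_{ij}V$ then follow by differentiating the equation and invoking Schauder estimates on rescaled dyadic annuli $\{|x|\sim R\}$ as $R\to\infty$, where rescaling to unit scale reduces the matter to standard interior estimates.

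The main obstacle is not the solvability — which collapses to the clean parity observation above — but the careful and uniform extraction of the sharp decay rates in \eqref{V3} simultaneously for $V$ and for its first and second derivatives. I would carry out this bookkeeping along the lines of the argument already detailed in \cite{ep}.
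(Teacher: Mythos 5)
Your plan is essentially the standard construction and is consistent with the argument carried out in Section~2.2 of \cite{ep}, to which the paper defers (the paper gives no proof of its own). The core ideas are all sound: treat $L=-\Delta-pU^{p-1}$ as a compact perturbation of $-\Delta$ on $\mathcal D^{1,2}(\mathbb R^N)$ so the Fredholm alternative applies, dispose of the orthogonality conditions against $\psi^1,\dots,\psi^N$ by the parity observation that $F$ and $\psi^0$ are even while each $\psi^i$ with $i\geq 1$ is odd, pick $\nu(\xi)$ so that the right-hand side is also orthogonal to $\psi^0$, and obtain the decay \eqref{V3} by bootstrapping through the Newtonian potential plus rescaled Schauder estimates. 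The parity argument is precisely the clean way to avoid computing any curvature integrals.

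Two small points worth tightening. First, you attribute the need for $N\geq 7$ to the convergence of $\int F\psi^0$ and $\int|\psi^0|^2$, but these converge already for $N\geq 5$ since both $F$ and $\psi^0$ decay like $|x|^{-(N-2)}$. The dimension restriction $N\geq 7$ actually enters earlier, to ensure $F\in L^{2N/(N+2)}(\mathbb R^N)$ (so that $\i^*F\in\mathcal D^{1,2}$ and the Fredholm setup makes sense), and again at the end, to guarantee that a function decaying like $|x|^{-(N-4)}$ with gradient decaying like $|x|^{-(N-3)}$ actually lies in $\mathcal D^{1,2}$. Second, the Fredholm alternative on $\mathcal D^{1,2}$ naturally produces a solution orthogonal to the kernel with respect to the $\mathcal D^{1,2}$ inner product, which amounts to $\int U^{p-1}V\psi^i=0$ rather than the stated $\int V\psi^i=0$; the fix is trivial (subtract the $L^2$-projection of $V$ onto $\Span\{\psi^0,\dots,\psi^N\}$, which does not change $LV$, and check that $\int V\psi^i$ is finite once the decay \eqref{V3} is in hand), but it deserves a line so that the constraint stated in the proposition is literally verified.
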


\subsection{The tower}
 Let $r_0$ be a positive real number less than the injectivity radius of $M$, and $\chi$ be a smooth cutoff function  such that $0\le\chi\le1$ in $\mathbb{R}$, $\chi\equiv1$ in $ \[-r_0/2,r_0/2 \]$, and $\chi\equiv0$ out $ \[-r_0,r_0 \]$. For  any positive real number $\mu_j$, we define   $W_j$ by 
\begin{equation}\label{Wj}
W_j(z):=\underbrace{\chi(d_g(z, \xi))\mu_j^{-\frac{N-2}{2}}U\left(\frac{{\rm exp}_{\xi}^{-1}(z)}{\mu_j} \right)}_{:=\mathcal U_j(z)}+\mu_j^2 \underbrace{\chi(d_g(z, \xi))\mu_j^{-\frac{N-2}{2}}V\left(\frac{{\rm exp}_{\xi}^{-1}(z)}{\mu_j} \right)}_{:=\mathcal V_j(z)}, z\in M
\end{equation}
where the functions $U$ and $V$ are defined, respectively, in \eqref{bubble} and \eqref{EqV1}.\\

 We look for {\it symmetric} solutions of \eqref{Eq1}, according to the following definition.
\begin{definition}\label{fi-simmetria}
We say that a function $u:M\to\mathbb R$ is symmetric   if
 $u(H(x))=u(x)$ for any $x\in M.$ 
 $H$ is the isometry introduced in {Definition} \eqref{symmetry.2}.
\end{definition}

 More precisely, we look for symmetric solutions of \eqref{Eq1}
 of the form 
\begin{equation}\label{sol}
u_\eps(z):= \sum_{j=1}^k W_j(z) +\Phi_{ \eps}(z)
\end{equation}
where each term $W_j$ is defined in \eqref{Wj},   and
for any $j=1, \ldots, k$ the concentration parameter $\mu_j$ satisfies
  \begin{equation}\label{muj}
\mu_{j }= d_{j } \eps^{\gamma_j} \ \hbox{with}\ d_1,\dots,d_k\in(0,+\infty)\ \hbox{and}\ 
 \gamma_j:=\left(\frac{N-2}{N-6}\right)^{j-1}-\frac 12.
\end{equation} 
We point out that the choice the concentration rate for $\mu_j$ is motivated by the fact that (see the expansion of the reduced energy in \eqref{ex-jtilde})
$$\mu_1^4\sim\eps\mu_1^2\quad \hbox{and}\quad \({\mu_j\over\mu_{j-1}}\)^{N-2\over2}\sim \eps\mu_j^2\ \hbox{for any $j\ge2.$}$$

  The remainder term $\Phi_\eps$ shall be splitted into the sum of $k$ terms of different order
\begin{equation}\label{resto}
\Phi_{ \eps}(z):=\sum_{\ell=1}^k \phi_{\ell, \eps}(z), \qquad z\in M
\end{equation}
where  each remainder term $\phi_{\ell, \eps}$ only depends on   $d_{1},\dots,d_\ell $, it is symmetric   according to Definition \ref{fi-simmetria} and
it belongs to the space $\mathcal K^\bot_{\ell}$ defined in \eqref{kelle}.\\

\subsection{Setting of the problem}

We   provide the Sobolev space $H^1_g\(M\)$ with the scalar product
\begin{equation}\label{Eq6}
\<u,v\> =\int_M \left<\nabla u,\nabla v\right>_g d\nu_g+\beta_N\int_M\R_g uvd\nu_g
\end{equation}
where $d\nu_g$ is the volume element of the manifold.   We let $\|\cdot\| $ be the norm induced by $\<\cdot,\cdot\> $. Moreover, for any function $u$ in $L^q(M)$ and for any $A\subset M$, we let $|u|_{q,A}=\left(\int_A|u|^qd\nu_g\right)^{1/q}$.

We let $\i^*:L^{\frac{2N}{N+2}}\(M\)\rightarrow H^1_g(M)$ be the adjoint operator of the embedding $\i:H^1_g\(M\)\hookrightarrow L^{{2^*}}\(M\)$, i.e. for any $w$ in $L^{\frac{2N}{N+2}}(M)$, the function $u=\i^*\(w\)$ in $H^1_g\(M\)$ is the unique solution of the equation $ -\mathcal L_g u=w$ in $M$. By the continuity of the embedding of $H^1_g\(M\)$ into $L^{2^*}\(M\)$, we get
\begin{equation}\label{Eq7}
\left\|\i^*\(w\)\right\|\le C\left |w\right |_{\frac{2N}{N +2}}
\end{equation}
for some positive constant $C$ independent of $w$.
We rewrite problem \eqref{Eq1} as
\begin{equation}\label{Eq1b}
u=\i^*\[f(u) -\eps u\],\qquad u\in{\rm H}^1_g(M)\end{equation}
where we set
  $f(u):=(u^+)^p $ with  $p={N+2\over N-2} $.

For any $j=1,\dots,k$ we set
\begin{equation}\label{Zj}
Z_j^0(z):=\chi(d_g(z, \xi))\mu_j^{-\frac{N-2}{2}}\psi^0\left(\frac{{\rm exp}_\xi^{-1}(z)}{\mu_j}\right),\ z\in M
\end{equation}
where the function $\psi^0$ is defined in \eqref{Eq14} and
for any integer $\ell=1,\dots,k$,  we define the   subspaces
\begin{equation}\label{kelle}
\begin{aligned}
 &\mathcal K_{\ell}:={\rm Span}\left\{\i^*(Z_j^0),\   j=1,\dots,\ell\right\}\\ 
 &\mathcal K^\bot_{\ell}:=\left\{\phi\in H^1_g(M)\,:\, \hbox{$\phi$ is symmetric and}\ \langle \phi, \i^*(Z_j^0)\rangle  =0,\     j=1,\dots,\ell\right\}.\\
 \end{aligned}\end{equation}
We also define $\Pi_{\ell}$ and $\Pi^\bot_{\ell}$ the projections of the Sobolev space $H^1_g(M)$ onto the respective subspaces $\mathcal K_{\ell}$ and $\mathcal K^\bot_{\ell}$. \\

In order to solve 
  equation \eqref{Eq1b}, we shall solve the system
\begin{align}\label{bif}
&\Pi^\bot_{k}\left\{u_\eps-\i^*\left[f(u_\eps)-\eps u_\eps\right]\right\}=0,\\
\label{aux}
&\Pi_{k}\left\{u_\eps-\i^*\left[f(u_\eps)-\eps u_\eps\right]\right\}=0
\end{align}
 where $u_\eps$ is given in \eqref{sol}.

\section{The Ljapunov-Schmidt procedure}\label{3}

\subsection{The remainder term: solving equation \eqref{bif}}
In order to find the remainder term $\Phi_\eps,$ we shall find functions $\phi_{j, \eps}$ for any $j=1, \ldots, k,$ which solve the following system of $k$ equations

\begin{equation}\label{sistema}
\left\{
\begin{aligned}
&\mathcal E_1+\mathcal S_1(\phi_{1, \eps})+\mathcal N_1(\phi_{1, \eps})=0\\
&\mathcal E_2+\mathcal S_2(\phi_{2, \eps})+\mathcal N_2(\phi_{1, \eps}, \phi_{2, \eps})=0\\
&\ldots\\
&\ldots\\
&\mathcal E_k+\mathcal S_k(\phi_{k, \eps})+\mathcal N_k(\phi_{1, \eps}, \ldots, \phi_{k, \eps})=0.\\
\end{aligned}
\right.
\end{equation}
The error terms $\mathcal E_\ell$ are defined by
\begin{equation}\label{R1}
\mathcal E_1:=\Pi^\bot_{1}\left\{W_1-\i^*\left[f\left( W_1\right)-\eps W_1\right]\right\}
\end{equation}
and  
\begin{equation}\label{Rj}
\begin{aligned}
\mathcal E_\ell&:=\Pi^\bot_{\ell}\left\{W_\ell-\i^*\left[f\left(\sum_{j=1}^\ell W_j\right)-f\left(\sum_{j=1}^{\ell-1}W_j\right)-\eps W_\ell\right]\right\},\ \ell\ge2.\end{aligned}
\end{equation}
The linear operators $\mathcal S_\ell$ are defined by
 for $\ell=1, \ldots, k$
\begin{equation}\label{Lj}
\mathcal S_\ell (\phi_{\ell, \eps}):=\Pi^\bot_{\ell}\left\{\phi_{\ell, \eps}-\i^*\left[f'\left(\sum_{j=1}^\ell W_j\right)\phi_{\ell, \eps}-\eps \phi_{\ell,\eps}\right]\right\}.
\end{equation}
The higher order terms $\mathcal N_\ell$ are defined by
\begin{equation}\label{N1}
\mathcal N_1(\phi_{1, \eps}):=\Pi^\bot_{1}\left\{-\i^*\left[f\left(W_1+\phi_{1, \eps}\right)-f\left(W_1\right)-f'\left(W_1\right)\phi_{1, \eps}\right]\right\}
\end{equation}
and  
\begin{equation}\label{Nj}
\begin{aligned}
\mathcal N_\ell(\phi_{1, \eps}, \ldots, \phi_{\ell, \eps}) &:=\Pi^\bot_{\ell}\left\{-\i^*\left[f\left(\sum_{j=1}^\ell (W_j+\phi_{j, \eps})\right)-f\left(\sum_{j=1}^\ell W_j\right)\right.\right.\\
&\left.\left.-f'\left(\sum_{j=1}^\ell W_j\right)\phi_{\ell, \eps}-f\left(\sum_{j=1}^{\ell-1}(W_j+\phi_{j, \eps})\right)+f\left(\sum_{j=1}^{\ell-1} W_j\right)\right]\right\},\ \ell\ge2.
\end{aligned}
\end{equation}

In order to solve system \eqref{sistema}, first of all we need to evaluate the $H^1_g(M)-$ norm of the error terms $\mathcal E_\ell $.
This is done in the following lemma whose proof is postponed in Section \ref{appA}.\\

\begin{lemma}\label{errorej}
For any $\ell=1, \ldots, k$ and for any compact subset $A_\ell\subset (0,+\infty)^{\ell} $ there exists a positive constant $C $  and $\eps_0>0$ such that for any $\eps\in(0,\eps_0)$  and for any $(d_1,\dots,d_\ell)\in A_\ell$ there holds 
\begin{equation}\label{erroreRj}
\|\mathcal E_\ell\|\leq C  \left\{\begin{aligned} &\mu_\ell^{\frac 52}+\eps \mu_\ell^2+\left(\frac{\mu_\ell}{\mu_{\ell-1}}\right)^{\frac 9 4}\qquad &\mbox{if}\,\, N=7\\ &\mu_\ell^{3}|\ln\mu_\ell|^{\frac 5 8}+\eps\mu_\ell^2+\left(\frac{\mu_\ell}{\mu_{\ell-1}}\right)^{\frac 5 2}\qquad &\mbox{if}\,\, N=8\\ &\mu_\ell^{3}+\eps\mu_\ell^2+\left(\frac{\mu_\ell}{\mu_{\ell-1}}\right)^{\frac{N+2}{4}}\qquad &\mbox{if}\,\, N\ge9,\end{aligned}\right.\end{equation}
where we agree that if $\ell=1$ the interaction term $ \frac{\mu_\ell}{\mu_{\ell-1}}$ is zero.
In particular, by the choice of $\mu_\ell$'s in \eqref{muj} we deduce
\begin{equation}\label{elle}
\begin{aligned}
&\|\mathcal E_1\|=\left\{O\(\eps^{\frac54 }\)\ \hbox{if}\ N=7,\ O\(\eps^{\frac32 }|\ln\eps|\)\ \hbox{if}\ N=8,\ O\(\eps^{\frac32 }\)\ \hbox{if}\ N\ge9\right\},\\
&\|\mathcal E_\ell\|=O\(\eps^{{p\over2}\theta_\ell}\)\ \hbox{if}\ \ell\ge2,\end{aligned}\end{equation}
where
\begin{equation}\label{tetaelle} \theta_\ell:=\(\gamma_\ell-\gamma_{\ell-1}\)\frac{N-2}{2}=1+2\gamma_\ell = 2\left(\frac{N-2}{N-6}\right)^{\ell-1},\ \ell\ge2.
 \end{equation}
\end{lemma}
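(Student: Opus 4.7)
The strategy is to reduce the $H^1_g$-norm estimate to an $L^{2N/(N+2)}$-norm estimate via \eqref{Eq7} and the fact that $\Pi^\perp_\ell$ is a contraction: $\|\mathcal E_\ell\| \le C |G_\ell|_{\frac{2N}{N+2}}$ where
\[
G_\ell := -\mathcal L_g W_\ell - \Bigl[f\Bigl(\sum_{j=1}^\ell W_j\Bigr) - f\Bigl(\sum_{j=1}^{\ell-1} W_j\Bigr)\Bigr] + \eps W_\ell.
\]
I decompose $G_\ell = A_\ell + B_\ell + C_\ell$ into the single-bubble ansatz error $A_\ell := -\mathcal L_g W_\ell - f(W_\ell)$, the linear piece $B_\ell := \eps W_\ell$, and (for $\ell\ge 2$) the interaction $C_\ell := f(W_\ell) + f(\sum_{j<\ell}W_j) - f(\sum_{j\le\ell}W_j)$. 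These three pieces are responsible for the three summands on the right-hand side of \eqref{erroreRj}.

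The bound $|B_\ell|_{\frac{2N}{N+2}} = O(\eps\mu_\ell^2)$ is immediate from $\|W_\ell\|_{\frac{2N}{N+2}} = O(\mu_\ell^2)$. For $A_\ell$, I expand $-\mathcal L_g$ in normal coordinates at $\xi$ via \eqref{lb}--\eqref{gij} and apply Proposition \ref{bubbleV}: the correction $\mu_\ell^2 \mathcal V_\ell$ in $W_\ell$ cancels precisely the quadratic geometric terms in \eqref{conf-eu}, leaving four residuals: (i) the contribution $\mu_\ell^2 \nu(\xi)\,\psi^0_{\mu_\ell}$, which belongs to $\mathcal K_\ell$ and is annihilated by $\Pi^\perp_\ell$; (ii) cubic geometric remainders of pointwise size $O(\mu_\ell^3 |y|^3 |\partial^2 U(y)|)$ after rescaling $y=(x-\xi)/\mu_\ell$; (iii) cutoff contributions from $\chi$, which are exponentially small in $\mu_\ell$; and (iv) the superquadratic Taylor remainder $f(\mathcal U_\ell+\mu_\ell^2\mathcal V_\ell)-f(\mathcal U_\ell)-\mu_\ell^2 f'(\mathcal U_\ell)\mathcal V_\ell$. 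Computing the $L^{2N/(N+2)}$-norm of (ii) by rescaling, one finds that the rescaled integrand fails to be integrable at infinity when $N\ge 7$; the truncation at radius $r_0/\mu_\ell$ then dictates the leading behaviour and yields the stated $\mu_\ell^{5/2}$ for $N=7$, $\mu_\ell^3 |\ln\mu_\ell|^{5/8}$ for $N=8$ (borderline logarithm), and $\mu_\ell^3$ for $N\ge 9$.

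For $C_\ell$ the crucial ingredient is a sharp pointwise bound. Since $1<p<2$ whenever $N\ge 7$, the map $s\mapsto s^{p-1}$ is $(p-1)$-H\"older, so $|a^{p-1}-b^{p-1}|\le |a-b|^{p-1}$ for $a,b\ge 0$; applying this inside the two symmetric integral representations
\[
C_\ell = -p\,W_\ell\!\int_0^1\!\bigl[(A+tW_\ell)^{p-1}-(tW_\ell)^{p-1}\bigr]\,dt = -p\,A\!\int_0^1\!\bigl[(W_\ell+sA)^{p-1}-(sA)^{p-1}\bigr]\,ds,
\]
with $A := \sum_{j<\ell}W_j$, yields the sharp pointwise bound $|C_\ell|\le C\,\min\bigl(W_\ell A^{p-1},\,A W_\ell^{p-1}\bigr)$. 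On $\mathrm{supp}\,W_\ell$ the function $A$ is effectively constant at $A(\xi)\sim \mu_{\ell-1}^{-(N-2)/2}$ (dominated by the most concentrated previous bubble, $W_{\ell-1}$). Splitting $M$ along $|x-\xi|=\sqrt{\mu_\ell\mu_{\ell-1}}$ --- using $|C_\ell|\le C A W_\ell^{p-1}$ on the inner region (where $W_\ell\ge A$) and $|C_\ell|\le C W_\ell A^{p-1}$ on the outer region --- and rescaling, both regions are found to contribute at order $(\mu_\ell/\mu_{\ell-1})^{(N+2)/4}$ in $L^{2N/(N+2)}$. This exponent equals $9/4$ for $N=7$, $5/2$ for $N=8$, and $(N+2)/4$ for $N\ge 9$, matching \eqref{erroreRj}. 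Finally \eqref{elle} follows by substituting $\mu_\ell = d_\ell\eps^{\gamma_\ell}$ and observing that $(\mu_\ell/\mu_{\ell-1})^{(N+2)/4} \sim \eps^{(p/2)\theta_\ell}$ with $\theta_\ell$ as in \eqref{tetaelle}, which is precisely the balance encoded in the choice of $\gamma_\ell$.

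The main obstacle is this sharp interaction bound: using only the one-sided estimate $|C_\ell|\le C W_\ell A^{p-1}$ gives merely $(\mu_\ell/\mu_{\ell-1})^2$, which is insufficient since $(N+2)/4>2$ for $N\ge 7$. Combining the two H\"older bounds (equivalently, exploiting the minimum) together with the two-region splitting above is the decisive step that produces the correct exponent.
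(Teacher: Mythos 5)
Your decomposition — the single-bubble ansatz error $A_\ell$, the $\eps$-linear piece $B_\ell$, and the cross-interaction $C_\ell$ — mirrors exactly the paper's split of $\|\mathcal E_\ell\|$ into the two pieces $(I)$ and $(II)$ after invoking \eqref{Eq7} and the absorption of $\nu(\xi)Z_\ell^0$ by $\Pi^\perp_\ell$, and both arguments defer the single-bubble estimate to \cite{ep}. The only real difference is in how the interaction is organized: the paper partitions $B_\xi(r_0)$ into the $\ell$ dyadic annuli $\mathcal A_h$ of \eqref{anelli} and, on each, applies Lemma \ref{yyl} with the locally dominant function, producing the cross-term estimates \eqref{ok1}--\eqref{ok4}; you instead extract the sharp pointwise bound $|C_\ell|\le C\min\bigl(W_\ell A^{p-1},\,A W_\ell^{p-1}\bigr)$ via the two integral representations and cut at the single radius $\sqrt{\mu_\ell\mu_{\ell-1}}$. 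Both routes yield the exponent $(N+2)/4$, and yours works because $\sup_{B_\xi(r_0)} A \le c\,\mu_{\ell-1}^{-(N-2)/2}$ (the $W_j$ with $j<\ell$ are all dominated pointwise by $\mu_{\ell-1}^{-(N-2)/2}$), which is the content you use when computing the rescaled integrals. You correctly identify that the one-sided bound $|C_\ell|\le C\,W_\ell A^{p-1}$ alone would only give $(\mu_\ell/\mu_{\ell-1})^2$ near $\mathcal A_\ell$, which is too weak for $N\ge 7$; the paper's version of the same observation is that on $\mathcal A_\ell$ one must apply Lemma \ref{yyl} in the reverse direction. Two minor imprecisions that do not affect the conclusion: the cutoff contributions from $\chi$ are of high \emph{polynomial} order in $\mu_\ell$ (not exponentially small), and on the outer region $A$ is not actually constant (it behaves like $\max_h W_h$), so your statement that $A$ is ``effectively constant on $\mathrm{supp}\,W_\ell$'' should be read only as the upper bound $A\lesssim \mu_{\ell-1}^{-(N-2)/2}$, which nevertheless suffices.
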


Next, we need to understand the invertibility of the linear operators $\mathcal S_\ell  $.
This is done in the following lemma whose proof can be carried out as in   \cite{mp}.\\
\begin{lemma}\label{lineare}
For any $\ell=1, \ldots, k$ and for any compact subset $A_\ell\subset (0,+\infty)^{\ell} $ there exists a positive constant $C $  and $\eps_0>0$ such that for any $\eps\in(0,\eps_0)$  and for any $(d_1,\dots,d_\ell )\in A_\ell$  there holds
\begin{equation}
\|\mathcal L_\ell (\phi_{\ell})\|\geq C  \|\phi_{\ell}\|\ \hbox{for any $\phi_\ell \in \mathcal K_{\ell}^\bot$}.\end{equation}
\end{lemma}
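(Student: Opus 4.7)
The plan is a standard contradiction argument at multiple scales, in the spirit of the proof in Musso--Pistoia \cite{mp}. Suppose the claim fails: there exist sequences $\eps_n\to 0$, $(d_{1,n},\dots,d_{\ell,n})\to(d_1^*,\dots,d_\ell^*)\in A_\ell$, and $\phi_n\in\mathcal K_\ell^\bot$ with $\|\phi_n\|=1$ and $\|\mathcal S_\ell(\phi_n)\|\to 0$. Unfolding the projection, this reads
\begin{equation*}
\phi_n-\i^*\bigl[f'(W_n)\phi_n-\eps_n\phi_n\bigr]=h_n+\sum_{j=1}^\ell c_{j,n}\,\i^*(Z_j^0),
\end{equation*}
where $W_n:=\sum_{j=1}^\ell W_j$, $h_n\to 0$ in $H^1_g$, and the $c_{j,n}\in\mathbb R$ are Lagrange multipliers.

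First I would estimate the $c_{j,n}$. Testing the above identity against $\i^*(Z_i^0)$ for $i=1,\dots,\ell$ and using the scale separation in \eqref{muj}, the matrix $\bigl(\langle\i^*(Z_j^0),\i^*(Z_i^0)\rangle\bigr)_{ij}$ turns out to be nearly diagonal and uniformly invertible, while the right-hand side tends to zero; consequently $c_{j,n}\to 0$ for every $j$.

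Next, for each $j\in\{1,\dots,\ell\}$ I would rescale by setting $\tilde\phi_{j,n}(y):=\mu_{j,n}^{(N-2)/2}\phi_n(\exp_\xi(\mu_{j,n}y))$. The bound $\|\phi_n\|=1$ gives a uniform $\mathcal D^{1,2}(\mathbb R^N)$ bound, so along a subsequence $\tilde\phi_{j,n}\rightharpoonup\tilde\phi_j$. At the scale $\mu_{j,n}$ only the $j$-th bubble contributes a non-trivial limiting potential (the others concentrate strictly faster or strictly slower), so $\tilde\phi_j$ solves \eqref{Eqlin} and hence lies in $\Span\{\psi^0,\psi^1,\dots,\psi^N\}$. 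The orthogonality $\langle\phi_n,\i^*(Z_j^0)\rangle=0$ passes to the limit as $\int_{\mathbb R^N}\tilde\phi_j\,\psi^0\,dy=0$, killing the $\psi^0$-component; the symmetry $\phi_n\circ H=\phi_n$, together with $H(\xi)=\xi$ and $dH_\xi=-\mathrm{id}$, transfers to the evenness condition $\tilde\phi_j(-y)=\tilde\phi_j(y)$, which kills the odd components $\psi^i=\partial_i U$ for $i=1,\dots,N$. Hence $\tilde\phi_j\equiv 0$ for every $j$.

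Finally I would upgrade these local weak limits to global strong convergence, forcing $\|\phi_n\|\to 0$ and contradicting $\|\phi_n\|=1$. Concretely, one tests the equation against $\phi_n$ on geodesic annuli isolating each scale $\mu_{j,n}$, uses the vanishing of $\tilde\phi_j$ to dispose of each bubble contribution, and handles the remaining ``outer'' region via the fact that there $f'(W_n)\to 0$ uniformly while $\eps_n\to 0$. The main obstacle is precisely the multi-scale bookkeeping: because all bubbles sit at the same point $\xi$, only the polynomial scale separation $\mu_j/\mu_{j-1}\to 0$ from \eqref{muj} decouples them, and one must check carefully that both the Lagrange-multiplier matrix stays uniformly non-degenerate and that the limiting linearized equation at each scale is unaffected by the other bubbles. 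This is why the argument of \cite{mp} must be adapted rather than quoted verbatim.
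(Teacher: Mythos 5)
Your plan is the same Musso--Pistoia contradiction argument the paper refers to (the paper gives no details and simply cites \cite{mp}), and you correctly identify the two features that make the adaptation work here: all bubbles are centred at $\xi$, so it is the polynomial scale separation $\mu_j/\mu_{j-1}\to0$ that decouples them, and the symmetry assumption is what allows the orthogonality to be imposed only against $Z_j^0$ since evenness eliminates the $\psi^i=\partial_iU$ components of the rescaled limits.

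There is one step where the sketch as written would not go through. You claim the Gram matrix $\bigl(\langle\i^*(Z_j^0),\i^*(Z_i^0)\rangle\bigr)_{ij}$ is \emph{uniformly} invertible and deduce $c_{j,n}\to0$. In fact $\|\i^*(Z_j^0)\|\sim\mu_j^2$ (one sees $\|\i^*(Z_j^0)\|\le C|Z_j^0|_{2N/(N+2)}\sim\mu_j^2$ from \eqref{Eq7}, and pairing with $W_j$ gives the matching lower bound), so the diagonal entries of that matrix are of order $\mu_j^4$ and the matrix degenerates as $\eps\to0$. Testing the identity against $\i^*(Z_i^0)$ only yields the bound $c_{j,n}\mu_j^2=O(1)$, since the term $\int_M f'(W_n)\phi_n\,\i^*(Z_i^0)\,d\nu_g$ is a priori only $O(\mu_i^2)$ and not $o(\mu_i^2)$. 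When you then rescale at scale $\mu_j$, the Lagrange-multiplier contribution $c_{j,n}Z_j^0$ does \emph{not} disappear: it converges (along a subsequence) to $\lambda_j\psi^0$ with $\lambda_j=\lim c_{j,n}\mu_j^2$, so the rescaled limit $\tilde\phi_j$ solves $-\Delta\tilde\phi_j-f'(U)\tilde\phi_j=\lambda_j\psi^0$ rather than the homogeneous linearised equation. The missing step is to eliminate $\lambda_j$: pair the limit equation with $\psi^0$; the left-hand side vanishes because $\psi^0$ solves \eqref{Eqlin}, so $\lambda_j\int_{\mathbb R^N}(\psi^0)^2\,dy=0$ and hence $\lambda_j=0$. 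After that your orthogonality-plus-symmetry argument gives $\tilde\phi_j\equiv0$ and the final contradiction via strong convergence proceeds as you describe.
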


Finally, we are able to solve system \eqref{sistema}. This is done in the following proposition, whose proof is postponed in Section \ref{appA} and relies on  a   sophisticated contraction mapping argument.\\
\begin{proposition}\label{phij}
 For any compact subset $A \subset  (0,+\infty)^{k} $ there exists a positive constant $C $ and $\eps_0$ such that for $\eps\in(0,\eps_0)$, for any $(d_1,\dots,d_k )\in A $ and   for any $\ell=1, \ldots, k$    there exists a unique function $\phi_{\ell, \,\eps}\in \mathcal K^\bot_{\ell}$  which solves the $\ell-$th equation in \eqref{sistema} such that
\begin{itemize}
\item[(i)] $\phi_{\ell, \eps}$  depends only on  $ d_1,\dots,d_ \ell$;
\item[(ii)] the map $(d_1,\dots,d_\ell )\to \phi_{\ell, \eps}(d_1,\dots,d_\ell) $ is of class $C^1$ and 
\begin{equation}\label{stimaphi1}
\begin{aligned}
&\|\phi_{1, \eps}\|=\left\{O\(\eps^{\frac54 }\)\ \hbox{if}\ N=7,\ O\(\eps^{\frac32 }|\ln\eps|\)\ \hbox{if}\ N=8,\ O\(\eps^{\frac32 }\)\ \hbox{if}\ N\ge9\right\},\\
&\|\phi_{\ell, \eps}\| =O\(\eps^{{p\over2}\theta_\ell}\)\ \hbox{if}\ \ell\ge2,\
(\theta_\ell \ \hbox{is defined in \eqref{tetaelle}});
 \end{aligned}\end{equation}
 Moreover,
 \begin{equation}\label{blabla} \|\nabla_{(d_1,\dots,d_\ell)}\phi_{\ell, \eps}\|=o(1).
 \end{equation}
  \item[(iii)] there exists $\rho>0$ such that \begin{equation}\label{stimalinfty}
\sup\limits_{d_g(x, \xi)\le \rho\mu_\ell }|\phi_{\ell, \eps}(x)|=O\( \mu_\ell^{-\frac{N-2}{2}}\).
\end{equation}  

 \end{itemize}
\end{proposition}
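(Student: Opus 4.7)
The plan is to solve the $k$ equations in \eqref{sistema} inductively in $\ell$ via a contraction mapping argument in the Banach space $\mathcal K_\ell^\bot$. For each fixed $\ell$, once $\phi_{1,\eps},\dots,\phi_{\ell-1,\eps}$ have already been produced, I treat the $\ell$-th equation as an equation for the single unknown $\phi_{\ell,\eps}$ and apply Lemma \ref{lineare} to rewrite it in fixed-point form
$$
\phi_{\ell,\eps}= T_\ell(\phi_{\ell,\eps}),\qquad T_\ell(\phi):=-\mathcal S_\ell^{-1}\!\left[\mathcal E_\ell+\mathcal N_\ell(\phi_{1,\eps},\dots,\phi_{\ell-1,\eps},\phi)\right].
$$
Property (i) is then automatic: by construction the $\ell$-th equation only involves $W_1,\dots,W_\ell$ and $\phi_{1,\eps},\dots,\phi_{\ell,\eps}$, so no $d_{\ell+1},\dots,d_k$ ever enters.

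The heart of the argument is the contraction estimate. I would fix the ball $B_\ell:=\{\phi\in\mathcal K_\ell^\bot:\|\phi\|\le K\,\eta_\ell\}$ where $\eta_\ell$ is the right-hand side of \eqref{stimaphi1}, so that $\|\mathcal E_\ell\|\le C\eta_\ell$ by Lemma \ref{errorej}. The crucial observation is the algebraic structure of $\mathcal N_\ell$ in \eqref{Nj}: the subtraction $f\bigl(\sum_{j=1}^{\ell-1}(W_j+\phi_j)\bigr)-f\bigl(\sum_{j=1}^{\ell-1}W_j\bigr)$ removes exactly those contributions that depend only on the already-fixed lower-order remainders. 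Writing $S_\ell:=\sum_{j=1}^{\ell}W_j$ and $P_\ell:=\sum_{j=1}^{\ell-1}\phi_{j,\eps}$ and using
$$
f(S_\ell+P_\ell+\phi)-f(S_\ell)-f'(S_\ell)\phi-f(S_{\ell-1}+P_\ell)+f(S_{\ell-1}),
$$
a second-order Taylor expansion (with remainders controlled in $L^{2N/(N+2)}$ via \eqref{Eq7}) gives, for $\phi,\tilde\phi\in B_\ell$,
$$
\|\mathcal N_\ell(\ldots,\phi)\|\le C\bigl(\|\phi\|^{\min(p,2)}+\|\phi\|\,\|P_\ell\|+\|W_\ell\|_{L^{2^*}(B_{r_0})}^{p-1}\|\phi\|\bigr),\qquad \|\mathcal N_\ell(\ldots,\phi)-\mathcal N_\ell(\ldots,\tilde\phi)\|\le \tfrac12\|\phi-\tilde\phi\|;
$$
the inductive hypothesis $\|\phi_{j,\eps}\|\le C\eta_j$ and $\eta_j\ll\eta_\ell$ for $j<\ell$ (a consequence of the rapid growth of $\theta_\ell$ in \eqref{tetaelle}) ensure that $\|T_\ell(\phi)\|\le K\eta_\ell$ if $K$ is large enough. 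The Banach fixed-point theorem then produces a unique $\phi_{\ell,\eps}$ in $B_\ell$, establishing the $H^1_g$ bound in (ii).

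The $C^1$ regularity and the gradient bound \eqref{blabla} follow from the implicit function theorem applied to the map $F_\ell(d_1,\dots,d_\ell,\phi)=\phi-T_\ell(\phi)$: the partial derivative $\partial_\phi F_\ell$ is invertible by Lemma \ref{lineare}, while $\partial_{d_j}F_\ell$ produces terms of the same order as $\mathcal E_\ell/d_j$, giving $o(1)$ after combination with $\mathcal S_\ell^{-1}$. Finally, the pointwise bound (iii) comes from elliptic regularity: the equation satisfied by $\phi_{\ell,\eps}$ is of the form $-\mathcal L_g\phi_{\ell,\eps}=V_\ell\phi_{\ell,\eps}+R_\ell$ with $V_\ell=f'(S_\ell)+O(\eps)$; rescaling by $\mu_\ell$ around $\xi_0$ turns this into a problem on $B(0,\rho)\subset\mathbb{R}^N$ whose potential is bounded and whose right-hand side is $O(\mu_\ell^{-(N+2)/2})$ in $L^q$ for $q$ large, so the Moser/De Giorgi estimates yield $\|\phi_{\ell,\eps}\|_{L^\infty(B(\xi_0,\rho\mu_\ell))}=O(\mu_\ell^{-(N-2)/2})$.

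The main obstacle is the nonlinear estimate in the second paragraph: since $N\ge 7$ we have $p=(N+2)/(N-2)\in(1,9/5]$, so $f$ is only Hölder continuously differentiable and $|f(u+v)-f(u)-f'(u)v|$ can only be bounded by $|v|^p$ globally, not $|v|^2$. Combined with the fact that $\phi_{\ell,\eps}$ concentrates at very different scales $\mu_1\gg\mu_2\gg\dots\gg\mu_\ell$, one has to carefully split the $L^{2N/(N+2)}$ norm region by region (using the localization given by the cutoff in \eqref{Wj}) to check that the mixed terms involving products of $W_j$ and $\phi_m$ at different scales really are absorbed by $\eta_\ell$. This is where the specific choice \eqref{muj} of concentration rates, and the identity $\eta_\ell\sim(\mu_\ell/\mu_{\ell-1})^{(N+2)/4}\sim\eps\mu_\ell^2$, become indispensable.
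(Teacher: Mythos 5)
Your overall architecture (inductive contraction mapping, Implicit Function Theorem for the $C^1$ dependence, rescaled elliptic regularity for the pointwise bound) matches the paper's. However, there are two substantive gaps.

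First, the claimed bound on $\mathcal N_\ell$ is incorrect: since $\mathcal N_\ell$ in \eqref{Nj} subtracts $f'\bigl(\sum_{j\le\ell}W_j\bigr)\phi_{\ell,\eps}$ but not the analogous linear term in $\sum_{j<\ell}\phi_{j,\eps}$, the quantity $\mathcal N_\ell(\phi_{1,\eps},\dots,\phi_{\ell-1,\eps},0)$ does \emph{not} vanish. Writing $S=\sum_{j\le\ell}W_j$, $S'=\sum_{j<\ell}W_j$, $P=\sum_{j<\ell}\phi_{j,\eps}$, a Taylor expansion of the five-term difference produces $[f'(S)-f'(S')]\,P$ plus terms of order $|P|^p$; this $\phi_\ell$-independent contribution must itself be shown to be $O(\eps^{(p/2)\theta_\ell})$, and it is exactly this contribution that gives the extra constant $c\,\eps^{(p/2)\theta_\ell}$ on the right-hand side of \eqref{cont1} in the paper, which your bound omits.

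Second, and more importantly, your estimate treats the cross terms by $H^1_g$ norms alone (\,``$\|\phi\|\,\|P_\ell\|$''), but this loses too much and the contraction argument does not close. The crucial quantities are of the type $\bigl||\phi_{j,\eps}|^{p-1}W_\ell\bigr|_{2N/(N+2),\,\mathcal A_h}$ with $j,h<\ell$. If you bound this by $\|\phi_{j,\eps}\|^{p-1}\,|W_\ell|_{2N/(N-2),\,\mathcal A_h}$, the first factor is of order $\eta_j^{p-1}$, which does \emph{not} decay as $\eps\to0$ at a rate comparable to $(\mu_\ell/\mu_{\ell-1})$; a short computation in the case $N=7$, $j=h=1$, $\ell=2$ gives $\eps\cdot\eps^{5}=\eps^{6}$ whereas the target is $(\mu_2/\mu_1)^{(N+2)/4}=\eps^{9}$. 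The only way the paper closes this is by feeding the \emph{pointwise} estimate \eqref{stimalinfty} for the already-constructed $\phi_{j,\eps}$, $j<\ell$, back into the contraction estimate for $\phi_{\ell,\eps}$: on $\mathcal A_h\subset B_\xi(\rho\mu_j)$ one uses $|\phi_{j,\eps}|\le C\mu_j^{-(N-2)/2}$ to get the gain (see \eqref{ok5} and the surrounding case analysis). In other words, (iii) is not an independent add-on but is part of the inductive hypothesis used in the proof of (ii) for the next index; your proposal does not make this dependence explicit and implicitly tries to do without it, which fails.

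A smaller remark on (iii): the paper does not estimate $\phi_{\ell,\eps}$ in isolation. It first shows that the Lagrange multipliers $\lambda_{j,\eps}$ appearing in the weak form of the system are $o(1)$, then telescopes the first $\ell$ equations to get a clean semilinear equation \eqref{eqMell} for $\hat u_\ell=\sum_{j\le\ell}(W_j+\phi_{j,\eps})$, rescales at scale $\mu_\ell$, and applies Proposition~\ref{astratto}. If you work directly with $\phi_{\ell,\eps}$ you need to control the kernel terms and to rewrite the genuinely nonlinear difference $f\bigl(\sum_{j\le\ell}(W_j+\phi_j)\bigr)-f\bigl(\sum_{j<\ell}(W_j+\phi_j)\bigr)$ as a bounded potential times $\phi_{\ell,\eps}$ plus a controlled remainder; this is not automatic and should be justified.
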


\subsection{The reduced problem: proof of Theorem \ref{torri}}
 Let us define the energy $J_\epsilon:H_g^1(M)\to \mathbb R$
\begin{equation}\label{energy}
J_\epsilon(u):={1\over2}\int\limits_M\(|\nabla_g u|^2+\beta_N\R_g u^2+\epsilon u^2\)d\nu_g-{1\over p+1}\int\limits_M \(u^+\)^{p+1}d\nu_g,
\end{equation}
whose critical points are solutions to the problem \eqref{Eq1}.
\\
Let us introduce the reduced energy, defined if $(d_1, \ldots, d_k ) \in (0, +\infty)^k $ by
\begin{equation}\label{red-en}
\widetilde{J}_\eps(d_1, \ldots, d_k ):=J_\eps\(\sum_{j=1}^k W_j  +\Phi_{ \eps} \),
\end{equation} where the remainder term $\Phi_{\eps}=\sum_{j=1}^k \phi_{j, \eps}$ and the $\phi_{j, \eps}$'s  are defined in Proposition \ref{phij}.

The following result allows as usual to reduce our problem to a finite dimensional one. The proof is quite involved and it is postponed in Section  \ref{appA}.\\

\begin{proposition}\label{prob-rido}
\begin{itemize}
\item[(i)] $\sum_{j=1}^k W_j  +\Phi_{ \eps}$ is a solution to \eqref{Eq1} if and only if $(d_1, \ldots, d_k ) \in (0, +\infty)^k $  is a critical point of the reduced energy \eqref{red-en}
\item[(ii)] The following expansion holds true
\begin{equation}\label{ex-jtilde}\begin{aligned}
\widetilde{J}_\eps(d_1, \ldots, d_k ) :=  D_N&+\eps^2 \left[-A_N |{\rm Weyl}_g(\xi)|^2_g d_1^4+B_N d_1^2+\Upsilon_1\right] \\ &+\sum_{\ell=2}^k \eps^{\theta_\ell}\left[-C_N \left(\frac{d_\ell}{d_{\ell-1}}\right)^{\frac{N-2}{2}} +B_N d_\ell^2 +\Upsilon_\ell\right]\\
\end{aligned}\end{equation}
as $\eps\to 0$,    uniformly with respect to $(d_1, \ldots, d_k ) $ in compact subsets of $(0, +\infty)^k   .$
 Here  $\theta_\ell$ is defined in \eqref{tetaelle}, $A_N,$ $B_N$, $C_N,$ $D_N$  are positive constants and the higher order terms   
 $\Upsilon_\ell=\Upsilon_\ell (d_1, \ldots, d_\ell ) $ are  smooth functions  such that $ |\Upsilon_\ell|=o(1).$  

\end{itemize}
\end{proposition}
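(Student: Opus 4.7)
The plan splits the argument into the reduction (part (i)) and the explicit expansion (part (ii)); throughout we use Proposition \ref{phij} and Lemma \ref{errorej} as black boxes. For part (i) I would run a standard Ljapunov--Schmidt argument. By construction $\Phi_\eps$ solves \eqref{bif}, so the Euler--Lagrange residual $J_\eps'(u_\eps)=u_\eps-\i^*[f(u_\eps)-\eps u_\eps]$ (gradients identified via \eqref{Eq6}) automatically lies in $\mathcal K_k$, and we can write $J_\eps'(u_\eps)=\sum_{\ell=1}^k c_\ell\,\i^*(Z_\ell^0)$. The auxiliary equation \eqref{aux} is then equivalent to $c_1=\cdots=c_k=0$. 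Differentiating $\widetilde J_\eps$ gives
\begin{equation*}
\partial_{d_j}\widetilde J_\eps=\sum_\ell c_\ell\,\bigl\langle\i^*(Z_\ell^0),\partial_{d_j}u_\eps\bigr\rangle,
\end{equation*}
and a direct computation from \eqref{Wj} yields $\partial_{d_j}W_i=-\delta_{ij}d_j^{-1}Z_j^0+(\text{lower order})$, while $\partial_{d_j}\Phi_\eps=o(1)$ in $H^1_g$ by \eqref{blabla}. After rescaling rows and columns, the resulting coefficient matrix is a small perturbation of a diagonal matrix with positive entries $\langle\i^*(Z_j^0),Z_j^0\rangle$, hence invertible for small $\eps$, and the asserted equivalence follows.

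For part (ii) I would expand $J_\eps(u_\eps)$ in three stages. \emph{First}, Taylor's formula together with $\Phi_\eps\in\mathcal K^\bot_k$, the bound $\|J_\eps'(\sum_j W_j)\|\le C\sum_\ell\|\mathcal E_\ell\|$, Lemma \ref{errorej} and Proposition \ref{phij} yield
\begin{equation*}
J_\eps(u_\eps)=J_\eps\Bigl(\sum_{j=1}^k W_j\Bigr)+o(\eps^2),
\end{equation*}
so the remainder contributions are absorbed into $\Upsilon_1$. \emph{Second}, each self-energy $J_\eps(W_j)$ is expanded in normal coordinates at $\xi$ after rescaling by $\mu_j$; the correction profile $V$ from Proposition \ref{bubbleV} cancels the unwanted $O(\mu_j^2)$ and $O(\mu_j^3)$ pieces coming from \eqref{conf-eu}, leaving
\begin{equation*}
J_\eps(W_j)=\tfrac{1}{N}\!\int_{\mathbb R^N}\!U^{p+1}dx\;-\;A_N|\Weyl_g(\xi)|_g^2\,\mu_j^4\;+\;B_N\,\eps\,\mu_j^2\;+\;o(\mu_j^4+\eps\mu_j^2),
\end{equation*}
as in \cite{ep}, with $B_N=\tfrac12\int_{\mathbb R^N}U^2dx$ finite because $N\ge 7$. \emph{Third}, the cross term
\begin{equation*}
-\tfrac{1}{p+1}\!\int_M\!\Bigl[\bigl(\textstyle\sum_j W_j\bigr)^{p+1}-\textstyle\sum_j W_j^{p+1}\Bigr]d\nu_g
\end{equation*}
is reduced, by the strong scale separation $\mu_\ell/\mu_{\ell-1}\to 0$, to the consecutive interactions; evaluating $W_{\ell-1}\sim \mu_{\ell-1}^{-(N-2)/2}\alpha_N$ on the narrow support of $W_\ell$ gives the leading contribution
\begin{equation*}
-\!\int_M\! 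W_{\ell-1}^p\, W_\ell\,d\nu_g=-C_N\Bigl(\tfrac{\mu_\ell}{\mu_{\ell-1}}\Bigr)^{\!(N-2)/2}(1+o(1)),
\end{equation*}
non-consecutive pairs being strictly higher order.

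Substituting $\mu_j=d_j\eps^{\gamma_j}$ from \eqref{muj} and using the identity $\theta_\ell=(\gamma_\ell-\gamma_{\ell-1})(N-2)/2=1+2\gamma_\ell$ from \eqref{tetaelle}: at $j=1$ both $\mu_1^4$ and $\eps\mu_1^2$ are of order $\eps^2$, producing the first bracket of \eqref{ex-jtilde}; at $\ell\ge 2$ both $\eps\mu_\ell^2$ and $(\mu_\ell/\mu_{\ell-1})^{(N-2)/2}$ are of order $\eps^{\theta_\ell}$, while the self-Weyl term $\mu_\ell^4\sim\eps^{4\gamma_\ell}$ is strictly smaller (since $4\gamma_\ell>1+2\gamma_\ell$ when $\gamma_\ell>\tfrac12$) and is absorbed into $\Upsilon_\ell$. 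The main obstacle is not any single computation but the combinatorial bookkeeping that forces every error term --- the Taylor remainder above, non-consecutive interactions, contributions of the $\mathcal V_j$-corrections to the interactions, and cross pairings $\langle\mathcal E_\ell,\phi_{m,\eps}\rangle$ with $m\ne\ell$ --- to sit strictly below the leading order at its level; the sharp hierarchical bounds of Proposition \ref{phij}, the decay \eqref{V3} of $V$, and the geometric growth of the exponents $\theta_\ell$ are what make this bookkeeping work.
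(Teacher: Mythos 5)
Your part (i) matches the paper's argument: by construction the residual lies in $\mathcal K_k$, and the matrix with entries $\bigl\langle \i^*(Z_\ell^0),\partial_{d_j}u_\eps\bigr\rangle$ is diagonally dominant thanks to $\partial_{d_j}W_j=d_j^{-1}(Z_j^0+o(1))$, the orthogonality relations \eqref{czero}, and the smallness \eqref{blabla}; hence the annulation of $\partial_{d_j}\widetilde J_\eps$ forces all Lagrange multipliers to vanish. Stages 2 and 3 of your part (ii) also identify the correct leading terms (the self-energy expansion of $J_\eps(W_j)$ and the consecutive interaction $C_N(\mu_\ell/\mu_{\ell-1})^{(N-2)/2}$), exactly as in the paper's Step 2.

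However, stage 1 of your part (ii) has a genuine gap. You collapse the contribution of the correction into a single remainder,
$J_\eps(u_\eps)=J_\eps\bigl(\sum_j W_j\bigr)+o(\eps^2)$, ``absorbed into $\Upsilon_1$.'' This is not enough to establish the statement, for two reasons. First, the statement requires \emph{level-by-level} dependence, $\Upsilon_\ell=\Upsilon_\ell(d_1,\dots,d_\ell)$; your remainder depends on all of $d_1,\dots,d_k$ through all of $\phi_{1,\eps},\dots,\phi_{k,\eps}$, so putting it in $\Upsilon_1$ destroys the very structure the proposition asserts and that the proof of Theorem \ref{torri} exploits (each later maximization is performed with the smaller variables frozen, and the term-by-term dependence $\Upsilon_\ell(d_1,\dots,d_\ell)$ is what allows the induction on $\ell$ to close). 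Second, a global bound $o(\eps^2)$ is strictly weaker than what is needed at each level: for $\ell\ge2$ one has $\theta_\ell=2\bigl(\tfrac{N-2}{N-6}\bigr)^{\ell-1}>2$, so $\eps^{\theta_\ell}=o(\eps^2)$, and a single $o(\eps^2)$ remainder need not be $o(\eps^{\theta_\ell})$; it could swamp the leading term $-C_N(d_\ell/d_{\ell-1})^{(N-2)/2}+B_Nd_\ell^2$ at that scale. The paper avoids Taylor altogether in this step: it subtracts $J_\eps(\sum W_\ell)$ from $J_\eps(\sum(W_\ell+\phi_\ell))$, tests each equation of the triangular system \eqref{sistema} against $\phi_{m,\eps}$ ($m\ge\ell$) to eliminate the quadratic form, reorganizes the result into integrals $\int_M a_\ell$ with $a_\ell=a_\ell(d_1,\dots,d_\ell)$ (this is where property (i) of Proposition \ref{phij} is essential), and then shows $\int_M a_\ell\,d\nu_g=o(\eps^{\theta_\ell})$ by splitting over the annuli $\mathcal A_h$ and invoking the pointwise bound \eqref{stimalinfty}. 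You flag this ``bookkeeping'' as the main obstacle in your final paragraph, but your stage 1 does not actually carry it out, so the sketch as written does not yield the stated $\Upsilon_\ell$.
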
 

\medskip

\begin{proof} [\bf {Proof of Theorem \ref{torri}}]
By (i) of Proposition \eqref{prob-rido},  it is sufficient to find a critical point of the reduced energy $\widetilde{J}_\eps$.   By (ii) of Proposition \eqref{prob-rido}, it is sufficient to find a critical point of the function
\begin{equation}\label{m-0}F_\eps(d_1,\dots,d_k):=\sum\limits_{\ell=1}^k\eps^{\theta_\ell}\(G_\ell(d_1,\dots,d_\ell)+o_\ell(1)\)\end{equation}
where
$$G_1(d_1):=-A_N |{\rm Weyl}_g(\xi)|^2_g d_1^4+B_N d_1^2$$
and $$ G_\ell(d_1,\dots,d_\ell):=-C_N \left(\frac{d_\ell}{d_{\ell-1}}\right)^{\frac{N-2}{2}} +B_N d_\ell^2\ \hbox{if}\ \ell=2,\dots,k.$$
Here $o_\ell(1)$ only depends on $d_1,\dots,d_\ell$ and $o_\ell(1)\to0$ as $\e\to0$  uniformly with respect to $(d_1, \dots, d_\ell) $ in compact subsets of $(0, +\infty)^\ell   .$
We shall prove that $F_\eps$ has a maximum point. The claim will follow.\\

First, the function $G_1$ has a unique critical point $d_1^*$ which is a global maximum. In particular, given $\delta>0$ there exists $\sigma_1>0$ such that
\begin{equation}\label{m-1}
G_1(d_1)\le G_1(d_1^*)-\delta\ \hbox{if}\ |d_1-d_1^*|=\sigma_1.
\end{equation}
Now,  for any $\ell=2,\dots,k$   the function  $d_\ell\to G_\ell(d_1^*,\dots,d_{\ell-1}^*,d_\ell)  $  has a unique
 critical point $d_\ell^*$ which is a global maximum. In particular, given $\delta>0$ there exists $\sigma_\ell>0$ such that
\begin{equation}\label{m-2}
G_\ell(d_1^*,\dots,d_{\ell-1}^*,d_\ell)\le G_\ell(d_1^*,\dots,d_{\ell-1}^*,d_\ell^*)-\delta\ \hbox{if}\ |d_\ell-d_\ell^*|=\sigma_\ell.
\end{equation}
 We consider the compact set $K:=[d_1^*-\sigma_1,d_1^*+\sigma_1]\times\dots\times[d_k^*-\sigma_k,d_k^*+\sigma_k].$
For any $\eps$ small enough, there exists a  $(d_1^\e,\dots,d_k^\e)\in K$ such that
 $F_\e(d_1^\e,\dots,d_k^\e):=\max _KF_\e.$ 
 First of all, let us prove that
 \begin{equation}\label{m-3}
 \lim\limits_{\e\to0} d_\ell^\eps=d_\ell^*\ \hbox{for any}\ \ell=1,\dots,k.
 \end{equation}
 Let us start with $\ell=1.$ We know that 
 $F_\e(d_1^\e,\dots,d_k^\e)\ge F_\e(d_1^*,d_2^\e,\dots,d_k^\e),$ then by \eqref{m-0} we deduce that
 $$\e^{\theta_1}\[G_1(d_1^\e)-G_1(d_1^*)+o(1)\]\ge 0,$$
 which implies
 $$G_1(d_1^\e)\ge G_1(d_1^*)+o(1).$$
 On the other hand, since $d_1^*$ is the maximum of $G_1$ we also have
 $$G_1(d_1^*)\ge G_1(d_1^\e).$$
Combining the two inequalities and passing to the limit we get $\lim\limits_{\e\to0}G_1(d_1^\e)=G_1(d_1^*)$ and so \eqref{m-3} follows.
Assume that \eqref{m-3} holds for $\ell=1,\dots,i-1$ and let us consider the case $ \ell=i.$
We know that
$$F_\e(d_1^\e,\dots,d_k^\e)\ge F_\e(d_1^*,\dots, d_i^*,d_{i+1}^\e\dots,d_k^\e),$$ then by \eqref{m-0} we deduce that
\begin{equation*}
\begin{aligned}&\e^{\theta_i}\[G_i(d_1^\e,\dots,d_i^\e)-G_i(d_1^*,\dots, d_i^*)+o(1)\]\\ &=\eps^{\theta_i}\[\underbrace{G_i(d_1^\e,\dots,d_i^\e)-G_i(d_1^*,\dots, d_{i-1}^*,d_i^\e)}_{=o(1)\ by\ \eqref{m-3}}+G_i(d_1^*,\dots, d_{i-1}^*,d_i^\e)-G_i(d_1^*,\dots, d_i^*)+o(1)\]\ge 0,\end{aligned}
\end{equation*}
 which implies
$$G_i(d_1^*,\dots, d_{i-1}^*,d_i^\e)-G_i(d_1^*,\dots, d_i^*)+o(1)\ge0.$$
On the other hand, since $d_i^*$ is the maximum of $G_i(d_1^*,\dots, d_{i-1}^*,\cdot)$ we also have
 $$ G_i(d_1^*,\dots, d_i^*)\ge G_i(d_1^*,\dots, d_{i-1}^*,d_i^\e).$$
Combining the two inequalities and passing to the limit we get $\lim\limits_{\e\to0}G_i(d_1^*,\dots, d_{i-1}^*,d_i^\e)=G_i(d_1^*,\dots, d_{i-1}^*,d_i ^*)$ and so \eqref{m-3} follows.\\

Now, let us prove that   if $\e$ is small enough  $(d_1^\e,\dots,d_k^\e)\not\in \partial K.$
Assume, that  $|d_i^\e-d_i^*|=\sigma_i.$ for some $i\ge1.$ 
We have
$$F_\e(d_1^\e,\dots, d_i^\e,\dots, d_k^\e)\ge F_\e(d_1^*,\dots, d_i^*,\dots,  d_k^\e).$$ 
 On the other hand, by \eqref{m-1} we deduce that
\begin{equation*}
\begin{aligned}&F_\e(d_1^\e,\dots,  d_i^\e,\dots, d_k^\e)- F_\e(d_1^*,\dots,d_i^*,\dots,   d_k^\e)\\
&=
 \e^{\theta_i}\[G_i(d_1^\e,\dots, d_i^\e)-G_i(d_1^*,\dots, d_i^*)+o(1)\]\\
 &= \e^{\theta_i}\[\underbrace{G_i(d_1^\e,\dots, d_i^\e)-G_i(d_1^*,\dots,d_{i-1}^*, d_i^\e)}_{=o(1)\ by\ \eqref{m-3}}+\underbrace{G_i(d_1^*,\dots,d_{i-1}^*, d_i^\e)-G_i(d_1^*,\dots, d_i^*)}_{<-\delta \ by\ \eqref{m-3}}+o(1)\]\\
 &<0\ \hbox{and a contradiction arises.}\end{aligned}
\end{equation*}

\end{proof}

\section{Appendix} \label{appA}
\begin{proof}[{\bf Proof of Lemma \ref{errorej}}]
When $\ell=1$ we argue exactly as in Lemma 3.1 of \cite{ep}.\\ 
Let us focus on the case $\ell\geq 2$. \\
It is useful to point out that by \eqref{V3} in geodesic coordinate
\begin{equation}\label{uvw}
|W_j(x)|\le c {\mu_j^{ \frac{N-2}{2}} \over\(\mu_j^2+|x|^2\)^{N-2\over2}},\ x\in B(0,r_0).
\end{equation}
 Since $\Pi^\bot_\ell[\i^*(\nu(\xi)Z_\ell^0)]=0$ then we have
\begin{equation}\label{normarj}
\begin{aligned}
\|\mathcal E_\ell\|&=\left\|\Pi^\bot_\ell\left\{ W_\ell-i^*\left[f\left(\sum_{i=1}^\ell  W_i\right)-f\left(\sum_{i=1}^{\ell-1} W_i\right)-\eps  W_\ell +\nu(\xi) Z_\ell^0\right]\right\}\right\|\\
&\leq c \left|-\mathcal L_g  W_\ell+ \eps   W_\ell-\nu(\xi) Z_\ell^0-f\left(\sum_{i=1}^\ell  W_i\right)+f\left(\sum_{i=1}^{\ell-1} W_i\right)\right|_{\frac{2N}{N+2}}\\
&\leq c \underbrace{\left|-\mathcal L_g  W_\ell+ \eps   W_\ell-\nu(\xi) Z_\ell^0-f( W_\ell )\right|_{\frac{2N}{N+2}}}_{(I)}\\
&+\underbrace{c\left|f\left(\sum_{i=1}^\ell  W_i\right)-f\left(\sum_{i=1}^{\ell-1} W_i\right)-f( W_\ell )\right|_{\frac{2N}{N+2}}}_{(II)}
\end{aligned}
\end{equation}
Arguing as in Lemma 3.1 of \cite{ep} we get that 
\begin{equation}\label{erroreRjparteI}
(I) \le \left\{\begin{aligned} &\mu_\ell^{\frac 52}+\eps \mu_\ell^2\qquad &\mbox{if}\,\, N=7\\ &\mu_\ell^{3}|\ln\mu_\ell|^{\frac 5 8}+\eps\mu_\ell^2\qquad &\mbox{if}\,\, N=8\\ &\mu_\ell^{3}+\eps\mu_\ell^2\qquad &\mbox{if}\,\, N\ge 9.\\ \end{aligned}\right.\end{equation}
Now, let us prove that
\begin{equation}\label{stimaii}
(II)=O\(\(\mu_\ell\over\mu_{\ell-1}\)^{N+2\over4}\)
\end{equation}
  For any $\ell=1, \ldots, k$ we introduce the set of disjoint annuli  
\begin{equation}\label{anelli}
\mathcal A_h := B_\xi(\sqrt{\mu_{h-1}\mu_h})\setminus B_\xi(\sqrt{\mu_h \mu_{h+1}}),\   h=1,\ldots, \ell 
\end{equation}
where we agree that $\mu_0:=\frac{r_0^2}{\mu_1}$ and $\mu_{\ell+1}:=0$. It is useful to point out that
 $  B_\xi(r_0)=\mathcal A_1\cup\dots\cup\mathcal A_\ell ,$ so all the bubbles $W_i$ are supported in $B_\xi(r_0).$
Therefore we have
$$(II)=\sum\limits_{h=1}^\ell\left|f\left(\sum_{i=1}^\ell  W_i\right)-f\left(\sum_{i=1}^{\ell-1} W_i\right)-f( W_\ell )\right|_{\frac{2N}{N+2},\mathcal A_h}.$$
 It is useful to remind that the choice of the $\mu_\ell$'s in \eqref{muj} implies that
$$\left( {\mu_\ell\over \mu_{\ell-1}}\right)^{\frac{N+2}{4}} =O\( \eps^{\frac p2 \theta_\ell}\).$$
 If $h=1, \ldots, \ell-1$ by Lemma \ref{yyl}  we have 
 \begin{equation*}
\begin{aligned}
|(II)|_ {\frac{2N}{N+2}, \mathcal A_h}&=\left|f\left(\sum_{i=1}^\ell  W_i\right)-f\left(\sum_{i=1}^{\ell-1} W_i\right)-f( W_\ell )\right|_ {\frac{2N}{N+2}, \mathcal A_h}\\ &\leq c\left|\left|\sum_{i=1}^{\ell-1}  W_i\right|^{p-1}| W_\ell |\right|_{\frac{2N}{N+2}, \mathcal A_h}+c\left| W_\ell^p\right|_{\frac{2N}{N+2}, \mathcal A_h}\\
&\leq c \sum_{i=1}^{\ell-1}\left| W_i^{p-1} W_\ell \right|_{\frac{2N}{N+2}, \mathcal A_h}+c\left| W_\ell\right|^p_{\frac{2N}{N-2}, \mathcal A_h}\\
&= O \(\left( {\mu_\ell\over \mu_{\ell-1}}\right)^{\frac{N+2}{4}}\),\end{aligned}\end{equation*}
because, by \eqref{uvw} we get for any $h=1, \ldots, \ell-1$
\begin{equation}\label{ok1}\begin{aligned}
\left| W_\ell\right| _{\frac{2N}{N-2}, \mathcal A_h}&\le   c\left[\int\limits_{ \sqrt{\mu_h\mu_{h+1}}\le |x|\le\sqrt{\mu_{h-1}\mu_{h}} }\frac{\mu_\ell^N}{(\mu_\ell^2+|x|^2)^N}\, dx\right]^{\frac{N-2}{2N}}\\
&=c\left[\int\limits_{ {\sqrt{\mu_h\mu_{h+1}}\over\mu_\ell}\le |x|\le{\sqrt{\mu_{h-1}\mu_h} \over\mu_\ell}}\frac{1}{(1+|y|^2)^N}\, dy\right]^{\frac{N-2}{2N}}\\ &=O \(\left(\frac{\mu_\ell}{\sqrt{\mu_h \mu_{h+1}}}\right)^{\frac{N-2}{2}}\)=O \(\left( {\mu_\ell\over \mu_{\ell-1}}\right)^{\frac{N-2}{4}}\)
\end{aligned}\end{equation}
and for any $h=1, \ldots, \ell-1$ and ${i=1,\dots,\ell-1}$
\begin{equation}\label{ok2}\begin{aligned}\left| |W_i|^{p-1} W_\ell \right|_{\frac{2N}{N+2}, \mathcal A_h}&\le c
\left\{\int\limits_{ \sqrt{\mu_h\mu_{h+1}}\le |x|\le\sqrt{\mu_{h-1}\mu_h} }\left[\frac{\mu_i^2}{(\mu_i^2+|x|^2)^2}\frac{\mu_\ell^{\frac{N-2}{2}}}{(\mu_\ell^2+|x|^2)^{\frac{N-2}{2}}}\right]^{\frac{2N}{N+2}}\, dx\right\}^{\frac{N+2}{2N}}\\
&\le c\left\{\int\limits_{ {\sqrt{\mu_h\mu_{h+1}}\over\mu_\ell}\le |x|\le{\sqrt{\mu_{h-1}\mu_h} \over\mu_\ell}}\mu_\ell^N\left[\frac{\mu_i^{-2} \mu_\ell^{-\frac{N-2}{2}} }{(1 +|y|^2)^{\frac{N-2}{2}}}\right]^{\frac{2N}{N+2}}\, dy\right\}^{\frac{N+2}{2N}}\\ &=O \left(\({\mu_\ell\over\mu_i}\)^2\( \frac{\mu_\ell}{\sqrt{\mu_h \mu_{h+1}}}\right)^{\frac{N-6}{2}}\)=O \(\left( {\mu_\ell\over \mu_{\ell-1}}\right)^{\frac{N+2}{4}}\).
\end{aligned}\end{equation}
 
If $h=\ell$ by Lemma \ref{yyl}  we have 
\begin{equation*}
\begin{aligned}
|(II)|_{\frac{2N}{N+2}, \mathcal A_\ell}&=\left|f\left(\sum_{i=1}^\ell  W_i\right)-f\left(\sum_{i=1}^{\ell-1} W_i\right)-f( W_\ell )\right|_{\frac{2N}{N+2}, \mathcal A_\ell}\\ &
\leq c\left| \sum_{i=1}^{\ell-1}   |W_\ell | ^{p-1}W_i\right|_{\frac{2N}{N+2}, \mathcal A_\ell}+c\left|\left|\sum_{i=1}^{\ell-1} W_i\right|^p\right|_{\frac{2N}{N+2}, \mathcal A_\ell}\\
&\le c\sum_{i=1}^{\ell-1} \left|  W_i | W_\ell |^{p-1}\right|_{\frac{2N}{N+2}, \mathcal A_\ell}+c\sum_{i=1}^{\ell-1} \left|W_i \right|^p_{\frac{2N}{N-2}, \mathcal A_\ell}\\
&=O \(\left( {\mu_\ell\over \mu_{\ell-1}}\right)^{\frac{N+2}{4}}\),
\end{aligned}\end{equation*}
because by \eqref{uvw} we get for any $i=1, \ldots, \ell-1$
\begin{equation}\label{ok3}
\begin{aligned}
 \left|W_i \right|_{\frac{2N}{N-2}, \mathcal A_\ell}&\le   c\left[\int\limits_{ \sqrt{\mu_\ell\mu_{\ell+1}}\le |x|\le\sqrt{\mu_{\ell-1}\mu_\ell} }\frac{\mu_i^N}{(\mu_i^2+|x|^2)^N}\, dx\right]^{\frac{N-2}{2N}}\\
&=c\left[\int\limits_{ {\sqrt{\mu_\ell\mu_{\ell+1}}\over\mu_i}\le |x|\le{\sqrt{\mu_{\ell-1}\mu_\ell} \over\mu_i}}\frac{1}{(1+|y|^2)^N}\, dy\right]^{\frac{N-2}{2N}}\\ &=O \(\left({\sqrt{\mu_{\ell-1} \mu_{\ell}}\over\mu_i}\right)^{\frac{N-2}{2}}\)=O \(\left( {\mu_\ell\over \mu_{\ell-1}}\right)^{\frac{N-2}{4}}\)
\end{aligned}\end{equation}
and  
\begin{equation}\label{ok4}\begin{aligned}\left| |W_\ell|^{p-1} W_i \right|_{\frac{2N}{N+2}, \mathcal A_\ell}&\le c
\left\{\int\limits_{ \sqrt{\mu_\ell\mu_{\ell+1}}\le |x|\le\sqrt{\mu_{\ell-1}\mu_\ell} }\left[\frac{\mu_\ell^2}{(\mu_\ell^2+|x|^2)^2}\frac{\mu_i^{\frac{N-2}{2}}}{(\mu_i^2+|x|^2)^{\frac{N-2}{2}}}\right]^{\frac{2N}{N+2}}\, dx\right\}^{\frac{N+2}{2N}}\\
&\le c\left\{\int\limits_{ {\sqrt{\mu_\ell\mu_{\ell+1}}\over\mu_i}\le |x|\le{\sqrt{\mu_{\ell-1}\mu_\ell} \over\mu_i}}\mu_i^N\left[ 
{\mu_\ell^{ 2} \mu_i^{-\frac{N-2}{2}}\over \mu_i^4  |y|^ 4} \right]^{\frac{2N}{N+2}}\, dy\right\}^{\frac{N+2}{2N}}\\ &=O \left(\({\mu_\ell\over\mu_i}\)^2\(  {\sqrt{\mu_{\ell-1} \mu_{\ell}}\over\mu_i} \right)^{\frac{N-6}{2}}\)=O \(\left( {\mu_\ell\over \mu_{\ell-1}}\right)^{\frac{N+2}{4}}\).
\end{aligned}\end{equation}

The claim follows collecting all the previous estimates.
\end{proof}
We recall the following useful lemma.
\begin{lemma}\label{yyl}
For any $a>0$ and $b\in\mathbb R$ we have
$$\left||a+b|^\beta- a^\beta\right|\le\left\{ 
\begin{aligned}&c(\beta)\min\{|b|^\beta,a^{\beta-1}|b|\}\ \hbox{if}\ 0<\beta<1\\
&c(\beta)\(|b|^\beta+a^{\beta-1}|b|\)\ \hbox{if}\ \beta>1\\
\end{aligned}
\right.$$
and
$$\left||a+b|^\beta(a+b)-a^{\beta+1}-(1+\beta)a^\beta b\right|\le\left\{ 
\begin{aligned}&c(\beta)\min\{|b|^{\beta+1},a^{\beta-1}b^2\}\ \hbox{if}\ 0<\beta<1\\
&c(\beta)\max\{|b|^{\beta+1},a^{\beta-1}b^2\}\ \hbox{if}\ \beta>1\\
\end{aligned}
\right.$$
\end{lemma}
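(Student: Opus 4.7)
The plan is to reduce both estimates to elementary calculus on the power functions $f(t)=|t|^\beta$ and $g(t)=|t|^\beta t$, distinguishing the small-increment regime $|b|\le a$ (where $a+b$ stays comparable to $a$ and Taylor/mean-value arguments apply) from the large-increment regime $|b|>a$ (where $|b|$ dominates and only the triangle inequality for powers is available). Which of the $\min$, $\max$, or sum forms appears in the statement is then determined by comparing the two resulting bounds across these two regimes.

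For the first inequality, in the regime $|b|\le a$ the mean value theorem gives
\[
\bigl| |a+b|^\beta - a^\beta\bigr| \le \beta\, \theta^{\beta-1}|b| \le C(\beta)\, a^{\beta-1}|b|
\]
for some $\theta$ between $a$ and $a+b$, using $\theta\asymp a$. In the regime $|b|>a$ one has $|a+b|\le 2|b|$, hence $\bigl||a+b|^\beta - a^\beta\bigr|\le |a+b|^\beta + a^\beta\le C(\beta)|b|^\beta$. A direct comparison shows that $a^{\beta-1}|b|\le |b|^\beta$ if and only if $|b|\le a$ when $0<\beta<1$; so in that case the valid bound in each regime is exactly the $\min$ of $|b|^\beta$ and $a^{\beta-1}|b|$, yielding the stated $\min$ estimate. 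When $\beta>1$ the comparison reverses and both terms must be retained, producing the sum estimate.

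For the second inequality I would use the integral form of Taylor's theorem: setting $G(s):=|a+sb|^\beta(a+sb)$ for $s\in[0,1]$, one has $G(0)=a^{\beta+1}$, $G'(0)=(\beta+1)a^\beta b$, and
\[
G''(s)=\beta(\beta+1)\,|a+sb|^{\beta-1}\,\mathrm{sgn}(a+sb)\, b^2,
\]
so the quantity to be estimated equals $\int_0^1 (1-s)G''(s)\,ds$. Bounding this integral produces $C(\beta)a^{\beta-1}b^2$ in the small-increment regime (where $|a+sb|\asymp a$) and $C(\beta)|b|^{\beta+1}$ in the large-increment regime (where $|a+sb|\le 2|b|$ and the remainder integral converges since $\beta>0$). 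The ratio $a^{\beta-1}b^2/|b|^{\beta+1}=(a/|b|)^{\beta-1}$ equals $1$ at $|b|=a$, and for $|b|\lessgtr a$ lies $\gtrless 1$ according to $\beta\gtrless 1$; hence for $0<\beta<1$ the bound in each regime is the $\min$, while for $\beta>1$ it is the $\max$, matching the statement. The only non-trivial work is the bookkeeping across the four cases in $(\beta,|b|/a)$; no analytic obstacle arises.
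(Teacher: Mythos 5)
The paper states this lemma without proof (``We recall the following useful lemma''), so there is no in-paper argument to compare against; the relevant question is simply whether your argument closes. Your overall strategy --- split into small/large increment regimes, use a mean-value or Taylor-remainder estimate in the small regime and the triangle inequality in the large regime, then compare the two bounds against the $\min$/$\max$/sum --- is the standard and correct one, and the bookkeeping you do at the end (comparing $a^{\beta-1}b^2$ with $|b|^{\beta+1}$ via $(a/|b|)^{\beta-1}$) is accurate.

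However, there is a genuine gap in the way you set up the small-increment regime. You take it to be $|b|\le a$ and assert that there $a+b$ stays comparable to $a$, hence $\theta\asymp a$. This is false at and near the endpoint $b=-a$: with $b$ slightly greater than $-a$, the intermediate point $\theta$ can lie anywhere in $(a+b,a)$ and in particular arbitrarily close to $0$, where for $0<\beta<1$ the derivative $\beta\theta^{\beta-1}$ blows up, so the bound $\theta^{\beta-1}\le C(\beta)a^{\beta-1}$ fails. The fix is to take the small regime to be $|b|\le a/2$ (say), so that $\theta\ge a/2$ and $\theta^{\beta-1}\le(a/2)^{\beta-1}$; the middle range $a/2<|b|\le a$ is then absorbed by the same triangle-inequality estimate you use for $|b|>a$, together with the elementary observation that $|b|^{\beta}\le 2^{1-\beta}a^{\beta-1}|b|$ whenever $|b|\ge a/2$ and $0<\beta<1$. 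The same remark applies to the second inequality: for $0<\beta<1$ and $b<0$ with $|b|>a$, the point $a+sb$ crosses $0$ inside $(0,1)$, so $G''(s)=\beta(\beta+1)|a+sb|^{\beta-1}\mathrm{sgn}(a+sb)\,b^2$ is not bounded pointwise; you should say explicitly that it has an integrable singularity (exponent $\beta-1>-1$) and estimate the integral by the change of variables $u=a+sb$, which yields $\int_0^1|a+sb|^{\beta-1}\,ds\le C(\beta)\,\max(a,|a+b|)^\beta/|b|\le C(\beta)|b|^{\beta-1}$. With these two repairs the argument is complete.
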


\begin{proof}[{\bf Proof of Proposition \ref{phij}}] 
{\it Step 1: The case $\ell=1$} \\
(i) is trivial and (ii)   can be proved arguing exactly as in Proposition 3.1 of \cite{ep}.  \\
  Let us prove   that  (iii) holds.
The function $\phi_{1, \eps}$ weakly solves the first equation in \eqref{sistema}, namely $$\mathcal E_1+\mathcal S_1(\phi_{1, \eps})+\mathcal N_1(\phi_{1, \eps})=0.$$  
Then, if $\eps>0$ is small enough, there exists a constant $\lambda_\eps$ (depending only on $d_1$) such that $\phi_{1, \eps}$ weakly solves 
\begin{equation}\label{eqM}
-\mathcal L_g(W_1+\phi_{1, \eps})+ \eps(W_1+\phi_{1, \eps})-f(W_1+\phi_{1, \eps})=\lambda_ \eps \(-\mathcal L_g Z^0_1\)
\end{equation}
Let us first show that $\lambda_ \eps=o(1)$   as $\eps\to 0$.\\ We test the equation \eqref{eqM} by $Z^0_1$. We use the fact that $\phi_{1, \eps}\in \mathcal K_1^\bot$ and we get
\begin{equation}\label{eqMint1}
\begin{aligned}
&\int_M \[-\mathcal L_g  W_1 +\e W_1-f(W_1)\]Z^0_1\, d\nu_g  +\underbrace{\int_M (-\mathcal L_g \phi_{1,\eps} )Z_1^0\, d\nu_g}_{=\langle\phi_{1, \eps}, \iota ^* Z^0_1\rangle =0}+\e\int_M \phi_{1, \eps} Z^0_1\, d\nu_g\\
&-\int_M \[f(W_1+\phi_{1, \eps})-f(W_1)\]Z^0_1\, d\nu_g =\lambda_\eps \int_M \(-\mathcal L_g Z^0_1\)Z^0_1\, d\nu_g  
\end{aligned}
\end{equation}
Let us estimate each term in \eqref{eqMint1}.  
By   \eqref{stimaphi1}, \eqref{hhhh} and Lemma  \eqref{yyl} we deduce
$$\left|\int_M \[\mathcal L_g (W_1)+\e W_1-f(W_1)\]Z^0_1\, d\nu_g\right|\le c \left|\int_M \frac{\mu_1^{N-2\over 2}}{(\mu_1^2+ d_g(z, \xi)^2)^{N-2 \over 2}}Z^0_1\, d\nu_g\right|\le C \mu_1^2,$$
$$\e \left|\int_M \phi_{1, \e}Z^0_1\, d\nu_g\right|\le c \e\|\phi_{1, \e}\| |Z^0_1|_{2N\over N+2}\le c \e  \mu_1^2\|\phi_{1, \e}\|$$
and
\begin{equation*}
\begin{aligned}
 \left|\int_M \[f(W_1+\phi_{1, \eps})-f(W_1)\]Z^0_1\, d\nu_g\right|& \le \int_M | W_1^{p-1}\phi_{1, \e}Z^0_1|\,d\nu_g  
 + \int_M| \phi_{1, \e}^p Z^0_1|\, d\nu_g \le c\|\phi_{1, \e}\|.
\end{aligned}
\end{equation*}
Moreover, by \eqref{lb} and \eqref{gij} we deduce that for any $j=1,\dots,k$
\begin{equation}\label{ok100}
 \mathcal E_j^0:=-\mathcal L_g Z^0_j-f'(W_j)Z_j^0\quad\hbox{and}\quad | \mathcal E_j^0|=O\({\mu_j^{N-2\over2}\over\(\mu_j^2+d_g(z,\xi)^2\)^{N-2\over2}}\).
\end{equation}
Therefore an easy computation leads to 
\begin{equation*}
\begin{aligned}
 \int_M \(-\mathcal L_g Z^0_1\)Z^0_1\, d\nu_g & = \int_M f'\(W_1 \)(Z^0_1)^2\, d\nu_g  +\int_M \mathcal E_j^0 Z^0_1\, d\nu_g\\ &=\underbrace{\int\limits_{\mathbb R^N}f'(U)\(\psi^0\)^2dx}_{c_N}+O\(\mu_1^2\).
 \end{aligned}\end{equation*}

Collecting all the estimates in \eqref{eqMint1}, we deduce that $  \lambda_\eps =o(1).$ \\

  Let us set
 $\hat{u}_1 := W_1+\phi_{1, \eps} . $ 
 By \eqref{lb} and \eqref{ok100},  equation \eqref{eqM} in geodesic coordinates can be written as
\begin{equation}\label{eqM1}
-\Delta \hat{u}_1- (g^{ij}-\delta^{ij})\partial^2_{ij} \hat{u}_1- g^{ij}\Gamma^k_{ij} \partial_k \hat{u}_1+(\beta_N \R _g +\eps)\hat{u}_1-f(\hat{u}_1)= \lambda_\eps \[f'(W_1)Z^0_1+\mathcal E_1^0\]\ \hbox{in}\ B (0,r_0).
\end{equation}
 Therefore, if we take $r=\rho\mu_1$ and we scale  $\hat v_1(y):=\mu_1^{\frac{N-2}{2}}\hat u_1\circ {\rm exp}_\xi(\mu_1 y), $ the function $\hat v_1$ (taking into account \eqref{ok100}) solves  
\begin{equation}\label{eqM2}
\begin{aligned}
&  -\Delta \hat{v}_1-  \underbrace{\(g^{ij}(\mu_1 y)-\delta^{ij}(\mu_1 y)\)}_{:=a_{ij}(y)}\partial^2_{ij} \hat{v}_1-\underbrace{\mu_1 g^{ij}(\mu_1 y)\Gamma^k_{ij} (\mu_1 y)}_{:=b_k(y)} \partial_k \hat{v}_1\\
&+\underbrace{\mu_1^2(\beta_N \R _g(\mu_1 y) +\eps)}_{:=c(y)}\hat{v}_1-  \hat{v}_1^{p }=\underbrace{  \lambda_ \eps\( f'(U)\psi^0+O(\mu_1^2)\)}_{:=h(y)}\ \hbox{in}\  B\left(0,  {\rho} \right).
\end{aligned}
\end{equation}
By \eqref{gij} 
\begin{equation*}
\begin{aligned}&\sup\limits_{y\in B (0,  {\rho} )}|a_{ij}(y)|= O(\rho),\ \sup\limits_{y\in B (0,  {\rho} )}\(|\nabla a_{ij}(y)|+ |b_k(y)|\)= O(\mu_1),\ \sup\limits_{y\in B (0,  {\rho} )}|c(y)|= O(\mu_1^2),\\ &  \sup\limits_{y\in B (0,  {\rho} )} |h(y)|=O(\lambda_\e)=o(1).\end{aligned}
\end{equation*}
 We are in position to apply Proposition \ref{astratto}, which implies   that there exists $c>0$ such that
 $$ \sup\limits_{B (0,  {\rho} )}|\hat{v}_1|\le c.$$ 
  Therefore
   $$|\hat{v}_1(y)|=|U(y)+\mu_1^2 V(y) +\mu_1^{\frac{N-2}{2}}\hat\phi_{1, \eps}({\rm exp}_\xi (\mu_1 y))|\le c,\quad  y\in B\(0, \rho\)$$ and finally
$$ |\phi_{1, \e}(z)|\leq c \mu_1^{-\frac{N-2}{2}},\quad z\in B_\xi(\rho\mu_1).$$ 
{\it Step 2: The case $\ell\ge2.$}\\
Let us suppose that the first $(\ell-1)-$ th equations of \eqref{sistema} have the solutions $\phi_{j, \eps}$ with $j=1, \ldots, \ell-1$ with the all the  properties (i), (ii) and  (iii) and let us consider the $\ell$-th equation of \eqref{sistema}.
\begin{itemize}
\item {\it Proof of (i) and (ii):  existence and the uniform estimate}. \\

By  Proposition \ref{lineare} we can
 rewrite the equation $\mathcal E_\ell +\mathcal S_\ell(\phi_{\ell, \eps})+\mathcal N_\ell (\phi_{1, \eps}, \ldots, \phi_{\ell, \eps})=0$ as 
 $$\phi_{\ell, \eps} =\mathcal S^{-1}_\ell\left(\mathcal E_\ell +\mathcal N_\ell(\phi_{1, \eps}, \ldots, \phi_{\ell, \eps})\right):=\mathcal T_\ell(\phi_{\ell, \eps}).$$
As usual, we shall show that if $\eps$ is small enough, $\mathcal T_\ell:\mathcal B_\ell\to\mathcal B_\ell$ is a contraction mapping, where 
$\mathcal B_\ell:=\left\{\phi \in H^2_1(M)\,\, :\,\, \|\phi\|\leq R \eps^{\frac p2 \theta_\ell }\right\}$  for some   $R>0$.\\
First, by Proposition \ref{lineare} we   get
  $$\|\mathcal T_\ell(\phi_{\ell, \eps})\|\leq \|\mathcal S_\ell^{-1}\|\left(\|\mathcal E_\ell\|+ \|\mathcal N_\ell(\phi_{1, \eps}, \ldots, \phi_{\ell, \eps})\|\right)\leq c \left(\|\mathcal E_\ell\|+ \|\mathcal N_\ell(\phi_{1, \eps}, \ldots, \phi_{\ell, \eps})\|\right)$$ and by Proposition   \ref{errorej} we get
\begin{align*}
\|\mathcal E_\ell\| \leq c \eps^{\frac p2 {\theta_\ell} }.
\end{align*}
We shall prove that in the ball $\mathcal B_\ell$ there hold true that
\begin{equation}\label{cont1}\|\mathcal N_\ell(\phi_{1, \eps}, \ldots, \phi_{\ell, \eps})\|\le c \|\phi_{\ell, \eps}\|^p+c \|\phi_{1, \eps}\|^{p-1} \|\phi_{\ell, \eps}\|+c\eps^{\frac p2\theta_\ell}\end{equation}
and
\begin{equation}\label{cont2}\|\mathcal N_\ell(\phi_{1, \eps}, \ldots, \phi_{\ell, \eps})-\mathcal N_\ell(\phi_{1, \eps}, \ldots, \bar\phi_{\ell, \eps})\|\le   L\|\phi_{\ell, \eps}-\bar\phi_{\ell, \eps}\| \ \hbox{for some $L\in(0,1).$}\end{equation}
Then the claim will follow.\\

 We introduce the set of annuli defined in \eqref{anelli} and 
we get
\begin{equation}\label{Nl}
\begin{aligned}
\|\mathcal N_\ell(\phi_{1, \eps}, \ldots, \phi_{\ell, \eps})\| & \leq C \left|f\left(\sum_{j=1}^\ell ( W_j+\phi_{j, \eps})\right)-f\left(\sum_{j=1}^\ell   W_j\right)-f'\left(\sum_{j=1}^\ell  W_j\right)\phi_{\ell, \eps}\right.\\
&\left.-f\left(\sum_{j=1}^{\ell-1}( W_j+\phi_{j, \eps})\right)+f\left(\sum_{j=1}^{\ell-1} W_j\right)\right|_{\frac{2N}{N+2}}\\
&=\sum_{h=1}^{\ell }\left[\int_{\mathcal A_h} \left| \ldots\ldots\right|^{\frac{2N}{N+2}}\, d\nu_g \right]^{\frac{N+2}{2N}}.
\end{aligned}
\end{equation}
If $h=1,\dots,\ell-1$ by Lemma \ref{yyl} we deduce 
\begin{equation*}
\begin{aligned}
&\left[\int_{\mathcal A_h} \left| \ldots\ldots\right|^{\frac{2N}{N+2}}\, d\nu_g \right]^{\frac{N+2}{2N}}\le 
\left[\int_{\mathcal A_h} \left| f\left(\sum_{j=1}^\ell ( W_j+\phi_{j, \eps})\right)-f\left(\sum_{j=1}^\ell  W_j+\sum_{j=1}^{\ell-1}\phi_{j, \eps}\right)\right.\right.\\
&\left.\left.-f'\left(\sum_{j=1}^\ell  W_j+\sum_{j=1}^{\ell-1}\phi_{j, \eps}\right)\phi_{\ell, \eps}\right|^{\frac{2N}{N+2}}\, d\nu_g \right]^{\frac{N+2}{2N}}\\
&+\left[\int_{\mathcal A_h} \left| \left[f'\left(\sum_{j=1}^\ell  W_j+\sum_{j=1}^{\ell-1}\phi_{j, \eps}\right)-f'\left(\sum_{j=1}^\ell  W_j\right)\right]\phi_{\ell, \eps}\right|^{\frac{2N}{N+2}}\, d\nu_g \right]^{\frac{N+2}{2N}}\\
&+\left[\int_{\mathcal A_h} \left| f\left(\sum_{j=1}^\ell  W_j+\sum_{j=1}^{\ell-1}\phi_{j, \eps}\right)-f\left(\sum_{j=1}^{\ell-1}  W_j+\sum_{j=1}^{\ell-1}\phi_{j, \eps}\right)\right|^{\frac{2N}{N+2}}\, d\nu_g \right]^{\frac{N+2}{2N}}\\
&+\left[\int_{\mathcal A_h} \left| f\left(\sum_{j=1}^\ell  W_j\right)-f\left(\sum_{j=1}^{\ell-1}  W_j\right)\right|^{\frac{2N}{N+2}}\, d\nu_g \right]^{\frac{N+2}{2N}}\\
&\le c \|\phi_{\ell, \eps}\|^p+c \sum_{j=1}^{\ell-1}\|\phi_{j, \eps}\|^{p-1}\|\phi_{\ell, \eps}\|+c \left[\int_{\mathcal A_h}|  W_\ell|^{p+1}\, d \nu_g \right]^{\frac{N+2}{2N}}\\
&+c \left[\int_{\mathcal A_h}\left|\left|\sum_{j=1}^{\ell-1}( W_j+\phi_{j, \eps})\right|^{p-1}  W_\ell \right|^{\frac{2N}{N+2}}\, d\nu_g\right]^{\frac{N+2}{2N}}\\
&\le c \|\phi_{\ell, \eps}\|^p+c\sum_{j=1}^{\ell-1}\|\phi_{j, \eps}\|^{p-1}\|\phi_{\ell, \eps}\|+c|W_\ell|^p_{\frac{2N}{N-2}, \mathcal A_h}+c\sum_{j=1}^{\ell-1}||W_j|^{p-1}W_\ell| _{\frac{2N}{N+2}, \mathcal A_h}+\sum_{j=1}^{\ell-1}||\phi_j|^{p-1} W_\ell| _{\frac{2N}{N+2}, \mathcal A_h}\\
 &\le c \(\|\phi_{\ell, \eps}\|^p+  \|\phi_{1, \eps}\|^{p-1} \|\phi_{\ell, \eps}\| +\eps^{\frac p2\theta_\ell}\),
\end{aligned}
\end{equation*}
because of \eqref{ok1}, \eqref{ok2} and the following new estimate
\begin{equation}\label{ok5}
  ||\phi_j|^{p-1} W_\ell| _{\frac{2N}{N+2}, \mathcal A_h}=O\(\({\mu_\ell\over\mu_{\ell-1}}\)^{N+2\over4}\),\quad  j,h=1,\dots,\ell-1. \end{equation}
  Let us prove \eqref{ok5}.
We have to distinguish three cases $j\ge h+1$,  $j\le h-1$ and $j=h.$

If $j\le h-1 $  by \eqref{stimalinfty} we deduce that $\phi_j=O\(\mu_j^{-{N-2\over2}}\)$ in $\mathcal A_h\subset B(\xi,r\mu_j)$ and we have
\begin{equation}
\begin{aligned}
&||\phi_j|^{p-1} W_\ell| _{\frac{2N}{N+2}, \mathcal A_h}\le c{1\over \mu_j^2}|  W_\ell| _{\frac{2N}{N+2}, \mathcal A_h}
\le c{1\over \mu_j^2}\(meas \ \mathcal A_h\)^{2\over N}  |W_\ell| _{\frac{2N}{N-2}, \mathcal A_h}\\
& \le c{\mu_h\mu_{h+1}\over \mu_j^2} \left( {\mu_\ell\over \mu_{\ell-1}}\right)^{\frac{N-2}{4}} =O\( \left( {\mu_\ell\over \mu_{\ell-1}}\right)^{\frac{N+2}{4}} \).
	\end{aligned}
\end{equation}

If $h\le j\le \ell-1$ and $h+1\le \ell-1$ then by \eqref{ok1} and \eqref{stimaphi1} we deduce
\begin{equation}
\begin{aligned}
&||\phi_j|^{p-1} W_\ell| _{\frac{2N}{N+2}, \mathcal A_h}\le c \|\phi_j\|^{p-1}   |  W_\ell| _{\frac{2N}{N-2}, \mathcal A_h}
\le   c \|\phi_ {1}\|^{p-1}\left(\frac{\mu_\ell}{\sqrt{\mu_h \mu_{h+1}}}\right)^{\frac{N-2}{2}} \\
&\le c \|\phi_ {1}\|^{p-1}\left(\frac{\mu_\ell}{ \mu_{\ell-1}}\right)^{\frac{N-2}{2}} =O\( \left( {\mu_\ell\over \mu_{\ell-1}}\right)^{\frac{N+2}{4}} \)
	\end{aligned}
\end{equation}
  
If $j=h=\ell-1$ we split the annulus 
$$\mathcal A_{\ell-1}=\underbrace{\{\sqrt{\mu_{\ell-1}\mu_{\ell}}\le d_g(x,\xi)\le \mu_{\ell-1} \}}_{\mathcal A'_{\ell-1}}\cup
\underbrace{\{\mu_{\ell-1}  \le d_g(x,\xi)\le  \sqrt{\mu_{\ell-1}\mu_{\ell-2}}\}}_{\mathcal A''_{\ell-1}}$$
and we get
\begin{equation}
\begin{aligned}
&||\phi_{\ell-1}|^{p-1} W_\ell| _{\frac{2N}{N+2}, \mathcal A_{\ell-1}}= ||\phi_{\ell-1}|^{p-1} W_\ell| _{\frac{2N}{N+2}, \mathcal A'_{\ell-1}}
+ ||\phi_{\ell-1}|^{p-1} W_\ell| _{\frac{2N}{N+2}, \mathcal A''_{\ell-1}}\\
& \le c{1\over \mu_{\ell-1}^2}  |W_\ell| _{\frac{2N}{N+2}, \mathcal A'_{\ell-1}}
+c\|\phi_{\ell-1}\|^{p-1}   |  W_\ell| _{\frac{2N}{N-2}, \mathcal A''_{\ell-1}}\\ 
&=O \(\left( {\mu_\ell\over \mu_{\ell-1}}\right)^{\frac{N+2}{4}}\),
	\end{aligned}
\end{equation}
because
\begin{equation}\label{ok31}
\begin{aligned}
 \left|W_\ell\right|_{\frac{2N}{N+2}, \mathcal A'_{\ell-1}}&\le   c\left[\int\limits_{ \sqrt{\mu_{\ell-1}\mu_{\ell}}\le |x|\le \mu_{\ell-1} }\(\frac{\mu_\ell^{N-2\over2}}{(\mu_\ell^2+|x|^2)^{N-2\over2}}\)^{2N\over N+2}\, dx\right]^{\frac{N+2}{2N}}\\
&\le c\mu_\ell^{2}\left[\int\limits_{ {\sqrt{\mu_{\ell-1}\over\mu_{\ell}}}\le |y|\le{\mu_{\ell-1}\over\mu_\ell} }\(\frac{1}{(1+|y|^2)^{N-2\over2}}\)^{2N\over N+2}\, dy\right]^{\frac{N+2}{2N}}\\ &=O \(\mu_\ell^{2} \({\mu_\ell\over\mu_{\ell-1}}\)^{\frac{N-6}{4}}\)
\end{aligned}\end{equation}
and
\begin{equation}\label{ok32}
\begin{aligned}
 \left|W_\ell\right|_{\frac{2N}{N-2}, \mathcal A''_{\ell-1}}&\le   c\left[\int\limits_{\mu_{\ell-1}  \le |x|\le \sqrt{\mu_{\ell-1}\mu_{\ell-2}}}\(\frac{\mu_\ell^{N-2\over2}}{(\mu_\ell^2+|x|^2)^{N-2\over2}}\)^{2N\over N-2}\, dx\right]^{\frac{N-2}{2N}}\\
&\le c\left[\int\limits_{{\mu_{\ell-1}\over\mu_\ell}  \le |y|\le{ \sqrt{\mu_{\ell-1}\mu_{\ell-2}}\over\mu_\ell}}\frac{1}{(1+|y|^2)^{N}}\, dy\right]^{\frac{N-2}{2N}}\\ &=O \( \({\mu_\ell\over\mu_{\ell-1}}\)^{\frac{N-2}{2}}\)
\end{aligned}\end{equation}

It remains to evaluate the last term $h=\ell$ in \eqref{Nl}. By Lemma \eqref{yyl} we get
\begin{equation*}
\begin{aligned}
&\left[\int_{\mathcal A_\ell} \left| \ldots\ldots\right|^{\frac{2N}{N+2}}\, d\nu_g \right]^{\frac{N+2}{2N}}\le 
\left[\int_{\mathcal A_\ell} \left| f\left(\sum_{j=1}^\ell ( W_j+\phi_{j, \eps})\right)-f\left(\sum_{j=1}^\ell  W_j+\sum_{j=1}^{\ell-1}\phi_{j, \eps}\right)\right.\right.\\
&\left.\left.-f'\left(\sum_{j=1}^\ell  W_j+\sum_{j=1}^{\ell-1}\phi_{j, \eps}\right)\phi_{\ell, \eps}\right|^{\frac{2N}{N+2}}\, d\nu_g \right]^{\frac{N+2}{2N}}\\
&+\left[\int_{\mathcal A_\ell} \left| f\left(\sum_{j=1}^\ell  W_j+\sum_{j=1}^{\ell-1}\phi_{j, \eps}\right)-f\left(\sum_{j=1}^\ell  W_j\right)-f'\left(\sum_{j=1}^\ell  W_j\right)\sum_{j=1}^{\ell-1}\phi_{j, \eps}\right|^{\frac{2N}{N+2}}\, d\nu_g \right]^{\frac{N+2}{2N}}\\
&+\left[\int_{\mathcal A_\ell} \left| \left[f'\left(\sum_{j=1}^\ell  W_j+\sum_{j=1}^{\ell-1}\phi_{j, \eps}\right)-f'\left(\sum_{j=1}^\ell  W_j\right)\right]\phi_{\ell, \eps}\right|^{\frac{2N}{N+2}}\, d\nu_g \right]^{\frac{N+2}{2N}}\\
&+\left[\int_{\mathcal A_\ell} \left| f\left(\sum_{j=1}^{\ell-1}(  W_j+\phi_{j, \eps})\right)-f\left(\sum_{j=1}^{\ell-1}  W_j\right)-f'\left(\sum_{j=1}^{\ell-1}  W_j\right)\sum_{j=1}^{\ell-1} \phi_{j, \eps}\right|^{\frac{2N}{N+2}}\, d\nu_g \right]^{\frac{N+2}{2N}}\\
&+\left[\int_{\mathcal A_\ell} \left| \left[f'\left(\sum_{j=1}^\ell  W_j\right)-f'\left(\sum_{j=1}^{\ell-1}  W_j\right)\right]\sum_{j=1}^{\ell-1}\phi_{j, \eps}\right|^{\frac{2N}{N+2}}\, d\nu_g \right]^{\frac{N+2}{2N}}\\
&\le c \|\phi_{\ell, \eps}\|^p+ c \|\phi_{1, \eps}\|^{p-1}\|\phi_{\ell, \eps}\|+ c \sum_{j=1}^{\ell-1}\(\int_{\mathcal A_\ell}\left|\phi_{j, \eps}\right|^{p+1}\, d\nu_g\)^{\frac{N+2}{2N}}  +c\sum_{j=1}^{\ell-1}\(\int_{\mathcal A_\ell}\left|| W_\ell|^{p-1}\phi_{j, \eps}\right|^{\frac{2N}{N+2}}\, d\nu_g\)^{\frac{N+2}{2N}} \\
 &\le c \(\|\phi_{\ell, \eps}\|^p+  \|\phi_{1, \eps}\|^{p-1} \|\phi_{\ell, \eps}\| +\eps^{\frac p2\theta_\ell}\),
\end{aligned}
\end{equation*}
because if $j\le \ell-1 $  by \eqref{stimalinfty} we deduce that $\phi_j=O\(\mu_j^{-{N-2\over2}}\)$ in $\mathcal A_\ell\subset B(\xi,r\mu_j)$ and we have
\begin{equation*}
\(\int_{\mathcal A_\ell}\left|\phi_{j, \eps}\right|^{p+1}\, d\nu_g \)^{\frac{N+2}{2N}}\le c { \mu_j}^{-{N+2\over2}}\(\textrm{meas}\ \mathcal A_\ell\)^{\frac{N+2}{2N}}\le c \({\sqrt{\mu_\ell\mu_{\ell-1}}\over\mu_j}\)^{\frac{N+2}{2}}  =O\(\({\mu_\ell\over\mu_{\ell-1}}\)^{N+2\over4}\)
\end{equation*}
and
\begin{equation*}\begin{aligned}
 \(\int_{\mathcal A_\ell}\left|| W_\ell|^{p-1} \phi_{j, \eps}\right|^{\frac{2N}{N+2}}d\nu_g\)^{\frac{N+2}{2N}}&\le c  \mu_j^{-\frac{N-2}{2}}  
\(\int\limits_{\sqrt{\mu_\ell\mu_{\ell+1}}\le|x|\le\sqrt{\mu_{\ell-1}\mu_\ell}}\( {\mu_\ell^{ 2}\over\(\mu_\ell^2+|x|^2\)^{ 2}}\)^{2N\over N+2}dx\)^{\frac{N+2}{2N}}\\ & \le c  \mu_j^{-\frac{N-2}{2}}  \mu_\ell^2
\(\int\limits_{\sqrt{\mu_\ell\mu_{\ell+1}}\le|x|\le\sqrt{\mu_{\ell-1}\mu_\ell}}{1\over |x| ^ {8N\over N+2}}dx\)^{\frac{N+2}{2N}}\\ &
\le c\mu_j^{-\frac{N-2}{2}}  \mu_\ell^2\(\mu_{\ell-1}\mu_\ell\)^{N-6\over4}=O\(\({\mu_\ell\over\mu_{\ell-1}}\)^{N+2\over4}\).
\end{aligned}\end{equation*}
That concludes the proof of \eqref{cont1}.
Now, let us prove \eqref{cont2}.
 Again, by Lemma \ref{yyl} we get
\begin{equation*}
\begin{aligned}
&\|\mathcal N_\ell(\phi_{1, \eps}, \ldots, \phi_{\ell, \eps})-\mathcal N_\ell(\phi_{1, \eps}, \ldots, \bar\phi_{\ell, \eps})\| \\ & \leq c \left|f\left(\sum_{j=1}^{\ell }   W_j+\sum_{j=1}^{\ell-1}\phi_{j, \eps}+\phi_{\ell, \eps}\right)-f\left(\sum_{j=1}^{\ell }   W_j+\sum_{j=1}^{\ell-1}\phi_{j, \eps}+\bar\phi_{\ell, \eps}\right)-f'\left(\sum_{j=1}^\ell  W_j\right)\(\phi_{\ell, \eps}-\bar\phi_{\ell, \eps}\)
 \right|_{\frac{2N}{N+2}}\\
 \\ &= \left|\[f'\left(\sum_{j=1}^{\ell }   W_j+ \sum_{j=1}^{\ell-1}\phi_{j, \eps}+t\phi_{\ell, \eps}+(1-t)\bar\phi_{\ell, \eps}\right) -f'\left(\sum_{j=1}^\ell  W_j\right)\]\(\phi_{\ell, \eps}-\bar\phi_{\ell, \eps}\)\right|_{\frac{2N}{N+2}}\\
 &\le  c\left|\left|  \sum_{j=1}^{\ell-1}\phi_{j, \eps}+t\phi_{\ell, \eps}+(1-t)\bar\phi_{\ell, \eps}\right|^{p-1}\(\phi_{\ell, \eps}-\bar\phi_{\ell, \eps}\) \right|_{\frac{2N}{N+2}}\\
 &\le  c \sum_{j=1}^{\ell-1}\left|\left| \phi_{j, \eps}\right|^{p-1}\(\phi_{\ell, \eps}-\bar\phi_{\ell, \eps}\) \right|_{\frac{2N}{N+2}} 
 +c\left|\( |\phi_{\ell, \eps}|^{p-1}+ | \bar\phi_{\ell, \eps} |^{p-1}\)\(\phi_{\ell, \eps}-\bar\phi_{\ell, \eps}\) \right|_{\frac{2N}{N+2}} \\
&\le  c\(\sum_{j=1}^{\ell-1}\| \phi_{j, \eps}\|^{p-1}+\|\phi_{\ell, \eps}\|^{p-1}+\|\bar\phi_{\ell, \eps}\|^{p-1}\)\|\phi_{\ell, \eps}-\bar\phi_{\ell, \eps}\| \le L\|\phi_{\ell, \eps}-\bar\phi_{\ell, \eps}\| ,
\end{aligned}
\end{equation*}
for some $L\in(0,1)$ provided $\eps$ is small enough.\\

That concludes the proof.\\\\

\item {\it Proof of  (ii): the $C^1-$estimate.}\\

  We apply the Implicit Function Theorem to the map
 $\mathcal F_\e:(0,+\infty)^\ell\times H^1_g(M)\to \ H^1_g(M)$ defined by
\begin{equation*}
\begin{aligned}
\mathcal F_\e(\bar d,u):=&\phi+\pi({\bar d})\left\{\sum\limits_{i=1}^\ell W(d_i)-i^*\[f_\e\(\sum\limits_{i=1}^\ell W(d_i)+\pi({\bar d})(u)\)\]\right\}\\
&=\phi+\sum\limits_{i=1}^\ell W(d_i)- i^*\[f_\e\(\sum\limits_{i=1}^\ell W(d_i)+\pi({\bar d})(u)\) \]\\
&-\sum\limits_{i,j=1}^\ell\left< W(d_i ),Z(d_j) \right> Z(d_j)\\ &+\sum\limits_{j=1}^\ell \left<  i^*\[f_\e\(\sum\limits_{i=1}^\ell W(d_i)+\pi({\bar d})(u)\) \] ,Z(d_j) \right> Z(d_j)\\
\end{aligned}
\end{equation*}
where ${\bar d}:=(d_1,\dots,d_\ell)\in(0,+\infty)^\ell,$ 
$\pi({\bar d})(u):= u-\sum\limits_{i=1}^\ell\left<u,Z(d_i) \right> Z(d_i) ,$
  $Z(d_i) :=Z_i^0$ are defined in \eqref{Zj},  
$W(d_i):=W_i$ are defined in \eqref{Wj} and $f_\e(u):=f(u)-\e u.$

It is clear that $F_\e$ is a $C^1-$map. Moreover, 
by previous steps we deduce that for any $d_0\in(0,+\infty)^\ell$ there exists   $\phi_0=\sum\limits_{i=1}^\ell{\phi_{i,\e}} \in H^1_g$ such that
  $\pi({\bar d_0})(\phi_0)=\phi_0$ and
(see \eqref{eqMell})
 $\mathcal F_\e(\bar d_0,\phi_0)=0.$ We shall prove that
 \begin{equation}\label{dini1}
 \sup\limits_{u\not=0}{\|D\mathcal F_\e(\bar d_0,\phi_0)[u]\|\over \|u\|}\ge c>0 \end{equation}
and  \begin{equation}\label{dini2} \sup\limits_{d\not=0}{\|D\mathcal F_\e(\bar d_0,\phi_0)[\bar d]\|\over \|\bar d\|}=o(1)\ \hbox{as}\ \e\to0, \end{equation}
 {uniformly with respect to $\bar d_0$ in compact sets of $(0,\infty)^\ell$}.
The Implicit Function Theorem will imply that the map $\bar d_0\to \phi_0$ is a $C^1-$map and also that $|\nabla_{\bar d_0}\phi_0| =o(1).$
\\
First, we have
$$D\mathcal F_\e(\bar d_0,\phi_0)[u]=u-\pi({\bar d_0})\left\{ i^*\[f'_\e\(\sum\limits_{i=1}^\ell W(d_i^0)+\phi_0\) \pi({\bar d_0})(u) \]\right\} $$
and so
\begin{equation*}
\begin{aligned}
&\|D\mathcal F_\e(\bar d_0,\phi_0)[u]\| \ge c \left\|u-\pi({\bar d_0})(u)\right\| +	c\left\|\pi({\bar d_0})(u)-\pi({\bar d_0})\left\{ i^*\[f'_\e\(\sum\limits_{i=1}^\ell W(d_i^0)+\phi_0\) \pi({\bar d_0})(u) \]\right\}\right\| \\
&\ge c \left\|u-\pi({\bar d_0})(u)\right\|+c \underbrace{\left\|\pi({\bar d_0})(u)-\pi({\bar d_0})\left\{ i^*\[f' \(\sum\limits_{i=1}^\ell W(d_i^0) \) \pi({\bar d_0})(u) -\e\pi({\bar d_0})(u)  \]\right\}\right\|}_{\mathcal L
\(\pi({\bar d_0})(u)\) \ \hbox{(see \eqref{Lj})}} \\
&-\left\| \pi({\bar d_0})\circ i^*\left\{ \[f' \(\sum\limits_{i=1}^\ell W(d_i^0)+\phi_0\)-f' \(\sum\limits_{i=1}^\ell W(d_i^0) \) \]\pi({\bar d_0})(u) \right\}\right\|
 \\
&\ge  c \left\|u-\pi({\bar d_0})(u)\right\|+c\|\pi({\bar d_0})(u)\|-O\(\left|f' \(\sum\limits_{i=1}^\ell W(d_i^0)+\phi_0\)-f' \(\sum\limits_{i=1}^\ell W(d_i^0)\) \right|_{2N\over N+2}\|u  \|\) \\
&   \ge c\|u\|
 \end{aligned}
\end{equation*}
 because of Lemma \ref{lineare} and Lemma \ref{yyl}. Then \eqref{dini1} follows.\\
 
 Now,
  we can compute
\begin{equation*}
\begin{aligned}
&D\mathcal F_\e(\bar d_0,\phi_0)[\bar d]=    \sum\limits_{i=1}^\ell W'(d_i^0)d_i\\ &- i^*\[f'_\e\(\sum\limits_{i=1}^\ell W(d_i)+\phi_0\)\(\sum\limits_{i=1}^\ell W'(d_i^0)d_i
-\sum\limits_{i=1}^\ell\left<\phi_0, Z'(d_i^0)\right> Z (d_i^0)d_i- \sum\limits_{i=1}^\ell\left<\phi_0,Z (d_i^0)\right> Z'(d_i^0)d_i\)
\]\\
&-\sum\limits_{i,j=1}^\ell\[\left< W'(d_i ^0),Z(d_j^0) \right> Z(d_j^0) d_i +\left< W(d_i ^0),Z'(d_j^0) \right> Z(d_j^0)d_j+\left< W(d_i ^0),Z(d_j^0) \right> Z'(d_j^0)d_j\] \\
&+\sum\limits_{j=1}^\ell \left<  i^*\[f'_\e\(\sum\limits_{i=1}^\ell W(d_i^0)+\phi_0\)\right.\right.\\ &\qquad\left.\left.
\(\sum\limits_{i=1}^\ell W'(d_i^0)d_i
-\sum\limits_{i=1}^\ell\left<\phi_0, Z'(d_i^0)\right> Z (d_i^0)d_i- \sum\limits_{i=1}^\ell\left<\phi_0,Z (d_i^0)\right> Z'(d_i^0)d_i\)\] ,Z (d_j^0) \right> Z(d_j^0) \\
&+\sum\limits_{j=1}^\ell \left\{\left<  i^*\[f_\e\(\sum\limits_{i=1}^\ell W(d_i^0)+\phi_0\)\] ,Z'(d_j^0) \right> Z(d_j^0)d_j+\left<  i^*\[f_\e\(\sum\limits_{i=1}^\ell W(d_i^0)+\phi_0\)\] ,Z (d_j^0) \right> Z'(d_j^0)d_j\right\}\\ 
&=    \sum\limits_{i=1}^\ell \left\{W'(d_i^0)- i^*\[f'_\e\(\sum\limits_{i=1}^\ell W(d_i)+\phi_0\)W'(d_i^0)\]\right\}d_i\\ &- i^*\[f'_\e\(\sum\limits_{i=1}^\ell W(d_i)+\phi_0\)\(
-\sum\limits_{i=1}^\ell\left<\phi_0, Z'(d_i^0)\right> Z (d_i^0)d_i- \sum\limits_{i=1}^\ell\left<\phi_0,Z (d_i^0)\right> Z'(d_i^0)d_i\)
\]\\
&-\sum\limits_{i,j=1}^\ell \left< W'(d_i ^0)- i^*\[f'_\e\(\sum\limits_{i=1}^\ell W(d_i^0)+\phi_0\) 
  W'(d_i^0)\],Z (d_j^0) \right> Z(d_j^0)d_i\\
  &
+\sum\limits_{j=1}^\ell \left<  i^*\[f'_\e\(\sum\limits_{i=1}^\ell W(d_i^0)+\phi_0\) 
\( 
-\sum\limits_{i=1}^\ell\left<\phi_0, Z'(d_i^0)\right> Z (d_i^0)d_i- \sum\limits_{i=1}^\ell\left<\phi_0,Z (d_i^0)\right> Z'(d_i^0)d_i\)\] ,Z (d_j^0) \right> Z(d_j^0) \\
&-\sum\limits_{j=1}^\ell \left<\sum\limits_{i=1}^\ell  W(d_i^0)-i^*\[f_\e\(\sum\limits_{i=1}^\ell W(d_i^0)+\phi_0\)\] ,Z'(d_j^0) \right> Z(d_j^0)d_j\\
&-\sum\limits_{j=1}^\ell \left<\sum\limits_{i=1}^\ell  W(d_i^0)-i^*\[f_\e\(\sum\limits_{i=1}^\ell W(d_i^0)+\phi_0\)\] ,Z(d_j^0) \right> Z'(d_j^0)d_j
 \end{aligned}
\end{equation*}
and so
\begin{equation*}
\begin{aligned}
&\|D\mathcal F_\e(\bar d_0,\phi_0)[\bar d]\|= O\(   \sum_i  \left\|W'(d_i^0)- i^*\[f'_\e\(\sum\limits_{i=1}^\ell W(d_i)+\phi_0\)W'(d_i^0)\]\right\| \sum_i|d_i|\)\\ &+O\(\left|\sum_i W(d_i)+\phi_0\right|_{2N\over N-2}\|\phi_0\|\sum_i\|Z'(d_i^0)\|\sum_i\|Z(d_i^0)\|\sum_i|d_i|\)
\\
&+O\(   \sum_i  \left\|W'(d_i^0)- i^*\[f'_\e\(\sum\limits_{i=1}^\ell W(d_i)+\phi_0\)W'(d_i^0)\]\right\| \sum_i\|Z(d_i^0)\|^2\sum_i|d_i|\) \\
  & +O\(\left|\sum_i W(d_i)+\phi_0\right|_{2N\over N-2}\|\phi_0\|\sum_i\|Z'(d_i^0)\|\sum_i\|Z(d_i^0)\|^3\sum_i|d_i|\)
\\ &+O\(\left\| \sum\limits_{i=1}^\ell  W(d_i^0)-i^*\[f_\e\(\sum\limits_{i=1}^\ell W(d_i^0)+\phi_0\)\]  \right\| \sum_i\|Z'(d_i^0)\|\sum_i\|Z(d_i^0)\|\sum_i|d_i|\)\\
&=o(1)|\bar d|,
 \end{aligned}
\end{equation*}
because a straightforward computation shows that $\|Z'(d_i^0)\|=O(1)$ and $ \|Z(d_i^0)\|=O(1)$. Moreover,  taking into account Lemma \ref{yyl}, the estimate of the error in Lemma \ref{errorej} and estimate in \eqref{ok100} we get
$$\left\| \sum\limits_{i=1}^\ell  W(d_i^0)-i^*\[f_\e\(\sum\limits_{i=1}^\ell W(d_i^0)+\phi_0\)\]  \right\| =o(1)$$
and for any index $i$
$$ \left\|W'(d_i^0)- i^*\[f'_\e\(\sum\limits_{i=1}^\ell W(d_i)+\phi_0\)W'(d_i^0)\]\right\| =o(1).$$
Then \eqref{dini2} follows.\\

That concludes the proof.\\\\

\item {\it Proof of (iii): the pointwise estimate}.\\

 Let us consider the $j-$ th equation in \eqref{sistema} with $j<\ell$. Then, for any $\eps>0$ sufficiently small, there exist constants $\lambda_{j, 0}^\eps$ for $j=1, \ldots, \ell$ depending on $d_j$  for $j< \ell$ such that 
$$\begin{aligned}
&-\mathcal L_g( W_j+\phi_{j, \eps})+ \eps ( W_j+\phi_{j, \eps})-f\left(\sum_{m=1}^j( W_{m}+\phi_{m, \eps})\right)\\
&+f\(\sum_{m=1}^{j-1}(W_{m}+\phi_{m, \eps})\)=\sum_{m=1}^\ell \lambda_{m,  \eps} (-\mathcal L_g Z^0_m).
\end{aligned}
$$
If we sum on $j=1, \ldots, \ell$ we get

\begin{equation}\label{eqMell}
\begin{aligned}
&-\mathcal L_g\left(\sum_{j=1}^\ell(  W_j+\phi_{j, \eps})\right)+ \eps \left(\sum_{j=1}^\ell(  W_j+\phi_{j, \eps})\right)-f\left(\sum_{j=1}^\ell(  W_j+\phi_{j, \eps})\right) =\sum_{j=1}^\ell \lambda_{j, \eps} (-\mathcal L_g Z^0_j)
\end{aligned}
\end{equation}
First, we prove that $ \lambda_{j,\eps}=o(1)$   as $\eps\to 0$.\\
We test the equation \eqref{eqM} by $Z^0_i$ for $i=1,\dots,\ell$. We use the fact that each $\phi_{j, \eps}\in \mathcal K_\ell^\bot$ and we get
\begin{equation}\label{eqMint1-2}
\begin{aligned}
&\sum_{j=1}^\ell\int_M \[-\mathcal L_g  W_j +\e W_j-f(W_j)\]Z^0_i\, d\nu_g  +\sum_{j=1}^\ell\underbrace{\int_M (-\mathcal L_g \phi_{j, \eps} )Z_i^0\, d\nu_g}_{=\langle\phi_{j, \eps}, Z^0_1\rangle_g=0}+\e\sum_{j=1}^\ell\int_M \phi_{j, \eps} Z^0_i\, d\nu_g\\
&-\int_M \[f\(\sum_{j=1}^\ell W_j+\phi_{j, \eps}\)-\sum_{j=1}^\ell f(W_j)\]Z^0_i\, d\nu_g =\sum_{j=1}^\ell \lambda_{j, \eps}\int_M (-\mathcal L_g Z^0_j)Z_i^0\, d\nu_g  
\end{aligned}
\end{equation}
Let us estimate each term in \eqref{eqMint1}.  
By   \eqref{stimaphi1} and \eqref{hhhh}  we deduce
$$\left|\int_M \[\mathcal L_g (W_j)+\e W_j-f(W_j)\]Z^0_i\, d\nu_g\right|\le c \left|\int_M \frac{\mu_j^{N-2\over 2}}{(\mu_j^2+ d_g(z, \xi)^2)^{N-2 \over 2}}Z^0_i\, d\nu_g\right|\le C \mu_i^2,$$
$$\e \left|\int_M \phi_{j, \e}Z^0_i\, d\nu_g\right|\le c \e\|\phi_{j, \e}\| |Z^0_i|_{2N\over N+2}\le c \e  \mu_i^2\|\phi_{1, \e}\|$$
Moreover, we have
\begin{equation*}
\begin{aligned}
 \int_M \[f\(\sum_{j=1}^\ell W_j+\phi_{j, \eps}\)-\sum_{j=1}^\ell f(W_j)\]Z^0_i\, d\nu_g  &  = \int_M \[f\(\sum_{j=1}^\ell W_j+\phi_{j, \eps}\)- f\(\sum_{j=1}^\ell W_j\)\]Z^0_i\, d\nu_g  
 \\ &+\sum\limits_{\kappa=1}^\ell\int_M\[f\(\sum_{j=1}^\kappa W_j \)-f\(\sum_{j=1}^{\kappa-1} W_j\)-f(W_\kappa)\]Z^0_i\, d\nu_g  \\
 &=o(1),
\end{aligned}
\end{equation*}
because by Lemma  \eqref{yyl}
\begin{equation*}
\begin{aligned}
\left|\int_M \[f\(\sum_{j=1}^\ell W_j+\phi_{j, \eps}\)-\sum_{j=1}^\ell f(W_j)\]Z^0_i\, d\nu_g \right|& \le \sum_{j,m=1}^\ell  \int_M | W_j^{p-1}\phi_{m, \e}Z^0_i|\,d\nu_g  
 +\sum_{j =1}^\ell \int_M| \phi_{j, \e}^p Z^0_i|\, d\nu_g\\ & \le c\|\phi_{1, \e}\|=o(1)
\end{aligned}
\end{equation*}
and by  \eqref{stimaii}described
\begin{equation*}
\begin{aligned}\int_M\[f\(\sum_{j=1}^\kappa W_j \)-f\(\sum_{j=1}^{\kappa-1} W_j\)-f(W_\kappa)\]Z^0_i\, d\nu_g  &\le \left|f\(\sum_{j=1}^\kappa W_j \)-f\(\sum_{j=1}^{\kappa-1} W_j\)-f(W_\kappa)\right|_{2N\over N+2}|Z^0_i|_{2N\over N-2}\\ &=o(1).\end{aligned}
\end{equation*}
Moreover, by \eqref{ok100} we deduce that
\begin{equation}\label{czero}
\begin{aligned}
 \int_M \(-\mathcal L_g Z^0_j\)Z^0_i\, d\nu_g  = \int_M f'\(W_j \) Z^0_jZ^0_i\, d\nu_g  +\int_M \mathcal E_j^0 Z^0_i\, d\nu_g= c_0\(\delta_{ij}+o(1)\)
 \end{aligned}\end{equation}
 where $c_0$ is defined as follows.
 Indeed, by \eqref{ok100} and Holder inequality
 \begin{equation*}\begin{aligned}\int_M \mathcal E_j^0 Z^0_i\, d\nu_g&=O\(\int_{B(0,r_0)}{\mu_j^{N-2\over2}\over\(\mu_j^2+|x|^2\)^{N-2\over2}}{\mu_i^{N-2\over2}\over\(\mu_i^2+|x|^2\)^{N-2\over2}}dx\)\\ &=O\(\left|{\mu_j^{N-2\over2}\over\(\mu_j^2+|x|^2\)^{N-2\over2}}\right|_{2N\over N-2}\left|{\mu_i^{N-2\over2}\over\(\mu_i^2+|x|^2\)^{N-2\over2}}\right|_{2N\over N+2}\)=O(\mu_i^2) 
\end{aligned}\end{equation*}
and 
\begin{equation*}\begin{aligned}
\int_M f'\(W_j \) (Z^0_j)^2\, d\nu_g   =
\underbrace{\int_{\mathbb R^N}f'(U)(\psi^0)^2dx}_{=c_0}+o(1) 
\end{aligned}\end{equation*}
and if $j\not=i$
\begin{equation*}\begin{aligned}
\int_M f'\(W_j \) Z^0_jZ^0_i\, d\nu_g & =O\(\int_{B(0,r_0)}{\mu_j^{N+2\over2}\over\(\mu_j^2+|x|^2\)^{N+2\over2}}{\mu_i^{N-2\over2}\over\(\mu_i^2+|x|^2\)^{N-2\over2}}dx\)
\\ &=\left\{
\begin{aligned}&O\(\int_{B(0,r_0)}{\mu_j^{N+2\over2}\over\(\mu_j^2+|x|^2\)^{N+2\over2}} \mu_i^{-{N-2\over2 }}dx\)=O\(\({\mu_j\over\mu_i}\)^{N-2\over 2}\)\ &\hbox{if}\ j>i\\
&O\(\int_{B(0,r_0)}{\mu_j^{N+2\over2}\over\(\mu_j^2+|x|^2\)^{N+2\over2}}{\mu_i^{N-2\over2}\over |x| ^{N-2 }}dx\)=O\(\({\mu_i\over\mu_j}\)^{N-2\over 2}\)\ &\hbox{if}\ j<i .\\ \end{aligned}\right.
\end{aligned}\end{equation*}

Therefore an easy computation leads to 
\begin{equation*}
\begin{aligned}
 \int_M \(-\mathcal L_g Z^0_1\)Z^0_1\, d\nu_g  = \int_M f'\(W_1 \)(Z^0_1)^2\, d\nu_g  +\int_M \mathcal E_j^0 Z^0_1\, d\nu_g=\underbrace{\int\limits_{\mathbb R^N}f'(U)\(\psi^0\)^2dx}_{c_N}+O\(\mu_1^2\).
 \end{aligned}\end{equation*}
 Collecting all the previous estimates we get
that each $\lambda_{i,\e}=o(1)$. That proves our first claim.\\

Now, let us set  $\hat{u}_\ell := \sum_{j=1}^\ell(  W_j+\phi_{j, \eps}).$ By  \eqref{lb} and \eqref{ok100},    equation \eqref{eqMell} in geodesic coordinates can be written as
\begin{equation}\label{eqM1-2}
\begin{aligned}
&-\Delta \hat{u}_\ell-  (g^{ij}-\delta^{ij})\partial^2_{ij} \hat{u}_\ell-g^{ij}\Gamma^k_{ij} \partial_k \hat{u}_\ell+(\beta_N \R _g +\eps)\hat{u}_\ell-f\(\hat u_\ell\)\\ &= \sum_{j=1}^{\ell}\lambda_{  j,\eps}  \[f'(W_j)Z^0_j+\mathcal E_j^0\]\qquad \hbox{in}\ B (0,r_0).
\end{aligned}
\end{equation}
Therefore, if we take $r=\rho\mu_\ell$ and we scale  $\hat v_\ell(y):=\mu_\ell^{\frac{N-2}{2}}\hat \hat{u}_\ell\circ {\rm exp}_\xi(\mu_\ell y ),$ the function  $\hat v_\ell$ solves
\begin{equation}\label{eqM2ell}
\begin{aligned}
&-\Delta \hat{v}_\ell-  \underbrace{(g^{ij}(\mu_\ell y)-\delta^{ij}(\mu_\ell y))}_{:=a_{ij}(y)}\partial^2_{ij} \hat{v}_\ell-\underbrace{ \mu_\ell g^{ij}(\mu_\ell y)\Gamma^k_{ij} (\mu_\ell y) }_{:=b_k(y)}\partial_k \hat{v}_\ell\\
&+\underbrace{\mu_\ell^2(\beta_N \R _g(\mu_\ell y) +\eps) }_{:=c(y)} \hat v_\ell- \hat v_\ell^{p}\\ &=\underbrace{\lambda_{\ell,\e} \[f'(U)\psi^0 +O(\mu_\ell^2)  \]+\sum_{j=1}^{\ell-1}\lambda_{  j,\eps}   \hat{\mathcal E}_j^0}_{:=h(y)} \qquad \hbox{in}\ B (0,\rho),
\end{aligned}
\end{equation}
 where by \eqref{ok100} we easily deduce that 
$$\hat{\mathcal E}_j^0=O\(\mu_\ell^{N+2\over2}\mu_j^{\frac{N+2}{2}} \over\(\mu_j^2+|\mu_\ell y|^2\)^{N+2\over2}\)=O\(\({\mu_\ell \over\mu_j }\)^{\frac{N+2}{2}} \)=o(1).$$ \\
By \eqref{gij} 
\begin{equation*}
\begin{aligned}
 &\sup\limits_{y\in B (0,  {\rho} )}|a_{ij}(y)|= O(\rho),\ \sup\limits_{y\in B (0,  {\rho} )}\(|\nabla a_{ij}(y)|+ |b_k(y)|\)= O(\mu_\ell),\  \sup\limits_{y\in B (0,  {\rho} )}|c(y)| = O(\mu_\ell^2)\\ &\sup\limits_{y\in B (0,  {\rho} )} |h(y)|= o(1). \end{aligned}\end{equation*}
We are in position to apply Proposition \ref{astratto}, which implies   that there exists $c>0$ such that
 $$ \sup\limits_{B (0,  {\rho} )}|\hat{v}_\ell|\le c.$$
 Therefore
 $$|\hat{v}_\ell(y)|=\left| \sum_{j=1}^{\ell-1} \(\hat  W_j+\hat\phi_{j, \eps} \) +U(y)+\mu_\ell^2 V(y) +\mu_\ell^{\frac{N-2}{2}}\hat\phi_{\ell, \eps}({\rm exp}_\xi (\mu_1 y))\right|\leq c\quad \mbox{if}\ y\in B\(0, \rho\)$$ and this implies that 
$$|\phi_{\ell, \eps} (z)|\leq c \mu_\ell^{-\frac{N-2}{2}}\quad \mbox{if}\ z\in  B_\xi(\rho\mu_l).$$

\end{itemize}
\end{proof}
 
\begin{proposition}\label{astratto}
Let $u\in W^{1,2}(B(0,r))$ be a solution of
\begin{equation}\label{operatore}
-\Delta u +\sum_{i, j} a_{ij}\partial^2_{ij} u +\sum_\ell b_\ell\partial_\ell u + c u-u^p= h \qquad \mbox{in}\quad B(0, r).
\end{equation}
Assume that  there exist  $\lambda$ positive and small enough and $c>0$ such that 
\begin{equation}\label{ipotesi}
\max_{i, j}|a_{ij}|_{\infty, B(0, r)}\le \lambda;\qquad |\nabla a_{ij}|_{\infty, B(0, r )}+|b_\ell|_{\infty, B(0, r )}\le c;\qquad |h|_{\infty, B(0, r )}\le c.
\end{equation}
Then, if $\rho<r/2$
\begin{equation*}
|u|_{\infty, B(0,\rho )} \le C
\end{equation*} 
for some positive constant $C.$
\end{proposition}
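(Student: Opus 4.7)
The plan is to establish the $L^\infty$ bound through a Moser-type iteration combined with a Brezis--Kato argument to handle the critical exponent $p=\tfrac{N+2}{N-2}$. First I would rewrite the principal part in divergence form using the identity
\begin{equation*}
-\Delta u +\sum_{i,j} a_{ij}\partial^2_{ij} u = -\partial_j\bigl((\delta_{ij}-a_{ij})\partial_i u\bigr) - \sum_{i,j}(\partial_j a_{ij})\partial_i u,
\end{equation*}
so that \eqref{operatore} becomes a uniformly elliptic divergence-form equation with a bounded lower-order drift (by \eqref{ipotesi}) and a right-hand side $u^p+h-cu$. The smallness of $\lambda$ guarantees that the matrix $(\delta_{ij}-a_{ij})$ has ellipticity constants arbitrarily close to $1$.

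Next, for radii $\rho<r''<r'<r/2$ and a smooth cutoff $\eta$ with $\eta\equiv 1$ on $B(0,r'')$ and $\eta\equiv 0$ outside $B(0,r')$, I would test the equation against $\eta^{2}(u^+)^{2\beta+1}$ for $\beta\ge 0$. Integration by parts, ellipticity and Young's inequality produce a Caccioppoli-type bound of the form
\begin{equation*}
\int \eta^{2}\bigl|\nabla (u^+)^{\beta+1}\bigr|^{2}\,dx \;\le\; C(1+\beta)^{2}\!\int\!\bigl(|\nabla\eta|^{2}+\eta^{2}\bigr)(u^+)^{2\beta+2}\,dx \;+\; C(1+\beta)^{2}\!\int \eta^{2}(u^+)^{p-1}(u^+)^{2\beta+2}\,dx.
\end{equation*}
The Sobolev inequality applied to the left-hand side, followed by H\"older with exponent $N/2$ on the critical term, yields
\begin{equation*}
\Bigl(\int \eta^{2^{*}}(u^+)^{2^{*}(\beta+1)}\Bigr)^{2/2^{*}} \;\le\; C(1+\beta)^{2}\!\int (|\nabla\eta|^{2}+\eta^{2})(u^+)^{2\beta+2} \;+\; C(1+\beta)^{2}\,\|u^+\|_{L^{2^{*}}(\mathrm{supp}\,\eta)}^{p-1}\Bigl(\int\eta^{2^{*}}(u^+)^{2^{*}(\beta+1)}\Bigr)^{2/2^{*}}.
\end{equation*}

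This is where the main obstacle appears: $u^{p-1}$ belongs only to $L^{N/2}$, the borderline case in which the last term on the right is formally of the same order as the left. I would overcome it by exploiting the fact that $u\in W^{1,2}(B(0,r))$ and hence $u\in L^{2^{*}}(B(0,r))$, so that by absolute continuity $\|u^+\|_{L^{2^{*}}(B(0,r'))}^{p-1}$ can be made arbitrarily small by choosing $r'$ small enough; the offending term can then be absorbed into the left-hand side. A standard Moser iteration with a sequence of shrinking radii $r_{n}\downarrow \rho$ now gives $u^\pm\in L^{q}(B(0,\rho_{*}))$ for every finite $q$ and some $\rho<\rho_{*}<r/2$.

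Once $q>N$, the right-hand side of \eqref{operatore} sits in $L^{q}(B(0,\rho_{*}))$, and classical $W^{2,q}$ elliptic regularity for the perturbed operator (uniformly elliptic with coefficients in $W^{1,\infty}$ thanks to \eqref{ipotesi}) combined with the Sobolev embedding $W^{2,q}\hookrightarrow C^{0}$ produces the pointwise bound $|u|_{\infty,B(0,\rho)}\le C$. The constant $C$ depends on the $W^{1,2}$-norm of $u$ (which is controlled in every application of this proposition, since the scaled functions $\hat v_{\ell}$ have uniformly bounded $H^{1}$-norms), on the constants in \eqref{ipotesi}, on $\rho$ and $r$, but not on further information about $u$.
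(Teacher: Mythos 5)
Your proposal is correct and follows essentially the same route as the paper: a Brezis--Kato/Moser boot-strap to upgrade the integrability of $u$ past the critical threshold, followed by standard local elliptic estimates (the paper cites Lemma~6 of Han \cite{h} and Theorem~8.17 of \cite{gt}, which is precisely the De~Giorgi--Nash--Moser local boundedness plus a boot-strap to handle the critical nonlinearity).
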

\begin{proof}
The proof
 relies on a boot-strap argument as in Lemma 6 of
  \cite{h} together with standard elliptic estimates as in Theorem 8.17 of \cite{gt}.
\end{proof}

\begin{proof}[{\bf Proof of Proposition \ref{prob-rido}}]

{\it  Proof of (i).}\\
Let us prove that if $(d_1,\dots,d_k)$ is a critical point of $\widetilde J_\e$ then $ \sum_{\ell=1}^k ( W_\ell +\phi_{\ell, \e})$ is a critical point of the functional $J_\e.$
We have
$$
0=\partial_{d_h}\widetilde J_\e(d_1,\dots,d_k)=\nabla J_\e\( \sum_{\ell=1}^k ( W_\ell +\phi_{\ell, \e})\) \[\partial _{d_h} W_h+\partial_{d_h} \sum_{\ell=h}^k\phi_\ell\], \ \hbox{for any}\ h=1,\dots,k.
$$
Since $$\nabla J_\e\( \sum_{\ell=1}^k ( W_\ell +\phi_{\ell, \e})\) =\sum\limits_{j=1}^k \lambda_{j,\e}Z^0_j,$$
we get
$$0=\sum\limits_{j=1}^k \lambda_{j,\e}\<Z^0_j,\partial _{d_h} W_h+\partial_{d_h} \sum_{\ell=h}^k\phi_\ell\>, \ \hbox{for any}\ h=1,\dots,k.$$
Now,
$$\partial _{d_h} W_h=\e^{\gamma_h} {1\over \mu_h} \(Z^0_h+o(1)\)={1\over d_h} \(Z^0_h+o(1)\).$$
Moreover, by \eqref{blabla} we get
$$\<Z^0_j, \partial_{d_h}\phi_\ell\>=O\(\|Z^0_j\|\cdot\|\partial_{d_h}\phi_\ell\|\)=o\(1\). $$
Finally, by \eqref{czero} we get
$$\<Z^0_j,Z^0_h\>=o(1)\ \hbox{if}\ j\not=h\ \hbox{and}\ \<Z^0_h,Z^0_h\>=c_0+o(1).$$
Therefore,   the matrix relative to the system of the $\lambda_{j,\e}$'s is diagonally dominant and so   each $\lambda_{j,\e}$ is equal to zero. That proves our claim.\\\\

{\it  Proof of (ii).}\\

{\it Step 1.}
Let us first show that
$$J_\e\( \sum_{\ell=1}^k ( W_\ell +\phi_{\ell, \e})\) = J_\e \(\sum_{\ell=1}^k  W_\ell \) +  \sum_{\ell=1}^k \e^{\theta_\ell} \Upsilon_\ell $$ where $\Upsilon_\ell= \Upsilon_\ell (d_1, \ldots, d_\ell) $ are smooth functions such that $|\Upsilon_\ell |= o(1)$
  for any $\ell=1, \ldots, k$.
Indeed:
\begin{equation}\label{funz}
\begin{aligned}
&J_\e\(\sum_{\ell=1}^k ( W_\ell +\phi_{\ell, \eps})\)-J_\e\(\sum_{\ell=1}^k  W_\ell \)=\frac 12 \sum_{\ell=1}^k\int_M |\nabla_g \phi_{\ell, \eps}|^2\, d\nu_g+\sum_{m>\ell}\int_M \nabla_g \phi_{\ell, \eps}\nabla_g\phi_{m, \eps}\, d\nu_g\\
& +\frac 12 \sum_{\ell=1}^k\int_M \(\beta_N \R _g +\e\) \phi_{\ell, \eps}^2\, d\nu_g+\sum_{m>\ell}\int_M \(\beta_N \R _g +\e\)\phi_{\ell, \eps}\phi_{m, \eps}\, d\nu_g \\
&+\int_M \nabla_g \(\sum_{\ell=1}^k  W_\ell \)\nabla_g \(\sum_{m=1}^k\phi_{m, \eps}\)\, d\nu_g+\int_M \(\beta_N \R _g +\e\)\(\sum_{\ell=1}^k W_\ell \)\(\sum_{m=1}^k\phi_{m, \eps}\)\, d\nu_g\\
&-\int_M \[F\(\sum_{\ell=1}^k( W_\ell +\phi_{\ell, \eps})\)-F\(\sum_{\ell=1}^k W_\ell \)\]\, d\nu_g.
\end{aligned}
\end{equation}
Since each function $\phi_{\ell, \eps}$ solves the  equation
\begin{equation*}
-\mathcal L_g ( W_\ell +\phi_{\ell, \eps})+  \e ( W_\ell +\phi_{\ell, \eps})= f\(\sum_{j=1}^\ell ( W_j+\phi_{j, \e})\)-f\(\sum_{j=1}^{\ell-1} ( W_j+\phi_{j, \e})\)+\mathcal L_g\psi
\end{equation*}
 for some $\psi\in \mathcal K_{\mu_\ell, \xi},$
if we multiply    by $\phi_{m, \eps}$ with $m\ge \ell$ taking into account that   $\phi_{m, \eps} \bot \psi,$  we get
\begin{equation*}
\begin{aligned}
&\int_M \nabla_g  W_\ell  \nabla_g \phi_{m, \eps}\, d\nu_g + \int_M \nabla_g \phi_{\ell, \eps} \nabla_g \phi_{m, \eps}\, d\nu_g+\int_M \(\beta_N \R _g +\e\) W_\ell \phi_{m, \eps}\, d\nu_g \\
&+\int_M \(\beta_N \R _g +\e\)\phi_{\ell, \eps}\phi_{m, \eps}\, d\nu_g= \int_M \[ f\(\sum_{j=1}^\ell ( W_j+\phi_{j, \e})\)-f\(\sum_{j=1}^{\ell-1} ( W_j+\phi_{j, \e})\)\]\phi_{m, \eps}\, d\nu_g.
\end{aligned}
\end{equation*}
Therefore,  \eqref{funz} reads as
\begin{equation*}
\begin{aligned}
&\hskip-1,0 cm J_\e\(\sum_{\ell=1}^k ( W_\ell +\phi_{\ell, \eps})\)-J_\e\(\sum_{\ell=1}^k  W_\ell \)=-\frac 12 \sum_{\ell=1}^k\|\phi_{\ell, \eps}\|^2 -\frac 12\e \sum_{\ell=1}^k|\phi_{\ell, \eps}|_2^2\\
&+ \underbrace{\sum_{\ell=1}^k\int_M f\(\sum_{j=1}^\ell ( W_j+\phi_{j, \e})\) \phi_{\ell, \eps}\, d\nu_g -\sum_{\ell=1}^k\int_M f\(\sum_{j=1}^{\ell-1} ( W_j+\phi_{j, \e})\) \phi_{\ell, \eps}\, d\nu_g}_{(1)}\\
&+\underbrace{\sum_{m>\ell}\int_M f\(\sum_{j=1}^\ell ( W_j+\phi_{j, \e})\) \phi_{m, \eps}\, d\nu_g - \sum_{m>\ell} f\(\sum_{j=1}^{\ell-1} ( W_j+\phi_{j, \e})\) \phi_{m, \eps}\, d\nu_g}_{(2)}\\
& +\underbrace{\sum_{m<\ell}\int_M \[\mathcal L_g ( W_\ell )+\e  W_\ell  -f( W_\ell )\]\phi_{m, \eps}\, d\nu_g}_{(3)} +\underbrace{\sum_{m<\ell} \int_M f( W_\ell )\phi_{m, \eps}\, d\nu_g}_{(4)} \\
&\underbrace{-\int_M \[F\(\sum_{\ell=1}^k( W_\ell +\phi_{\ell, \eps})\)-F\(\sum_{\ell=1}^k W_\ell \)\]\, d\nu_g}_{(5)}
\end{aligned}
\end{equation*}
First, $(3)$ only depends on $d_1,\dots,d_\ell$ and
\begin{equation*}
|(3)| \leq \left[\int_M \(\frac{\mu_\ell^{N-2 \over 2}}{(\mu_\ell^2 +d_g(z, \xi)^2)^{\frac{N-2}{2}}}\)^{\frac{N+2}{2}}\, d\nu_g\right]^{\frac{2N}{N+2}}\|\phi_{1, \e}\|\le c \mu_\ell^2 \e^{1+\tau} =o(\e^{\theta_\ell}),
\end{equation*}
because by \eqref{lb} and \eqref{gij} we easily deduce that
   \begin{equation}\label{hhhh}|-\mathcal L_g ( W_\ell )+\e  W_\ell  -f( W_\ell )|\le c \frac{\mu_\ell^{{N-2 \over 2}}}{(\mu_\ell^2 +d_g(z, \xi)^2)^{\frac{N-2}{2}}}.\end{equation}
 Next, we remark that
\begin{equation*}
(2):=\sum_{\ell=2}^k \int_M f\(\sum_{j=1}^{\ell-1}( W_j+\phi_{j, \e})\)\phi_{\ell, \eps}\, d\nu_g
\ \hbox{and}\
(4):=\sum_{\ell=2}^k \int_M f\( W_\ell \)\sum_{j=1}^{\ell-1}\phi_{\ell, \eps}\, d\nu_g
\end{equation*}
and so
\begin{equation*}
\begin{aligned}
 &(1)+(2)+(4)+(5)\\ &=  -\int_M\[ F(W_1+\phi_{1, \e})-F(W_1)-f(W_1+\phi_{1, \e})\phi_{1, \e}\]\, d\nu_g \\
&-\sum_{\ell=2}^k \int_M \[ F\(\sum_{j=1}^{\ell}( W_j+\phi_{j, \e})\)-F\(\sum_{j=1}^{\ell-1}( W_j+\phi_{j, \e})\)\]\, d\nu_g \\
& +\sum_{\ell=2}^k \int_M \[ F\(\sum_{j=1}^{\ell} W_j\)-F\(\sum_{j=1}^{\ell-1} W_j\)\]\, d\nu_g+\sum_{\ell=2}^k \int_M f\(\sum_{j=1}^\ell ( W_j+\phi_{j, \e})\)\phi_{\ell, \eps}\, d\nu_g \\
& +\sum_{\ell=2}^k \int_M f( W_\ell )\sum_{j=1}^{\ell-1}\phi_{j, \e}\, d\nu_g \\
&= \int_M a_1\, d\nu_g   + \sum_{\ell=2}^k  \int_M a_\ell\, d\nu_g
\end{aligned}
\end{equation*}
where
$$   a_1 :=-\[ F(W_1+\phi_{1, \e})-F(W_1)-f(W_1+\phi_{1, \e})\phi_{1, \e}\]$$
and for any $\ell=2,\dots,k$
\begin{equation*}
\begin{aligned}
 a_\ell  &:=- \[ F\(\sum_{j=1}^{\ell}( W_j+\phi_{j, \e})\)-F\(\sum_{j=1}^{\ell-1}( W_j+\phi_{j, \e})\)\] \\
& + \[ F\(\sum_{j=1}^{\ell} W_j\)-F\(\sum_{j=1}^{\ell-1} W_j\)\] + f\(\sum_{j=1}^\ell ( W_j+\phi_{j, \e})\)\phi_{\ell, \eps} \\
& +  f( W_\ell )\sum_{j=1}^{\ell-1}\phi_{j, \e}.  \\
\end{aligned}
\end{equation*}
It is clear that each $a_\ell$'s only depends on $d_1,\dots,d_\ell$.
Moreover, by Lemma \ref{yyl} it follows that
\begin{equation*}
\begin{aligned}
 \int_M a_1\, d\nu_g&:= -\int_M \[F(W_1+\phi_{1, \e})-F(W_1)-F'(W_1)\phi_{1, \e}\]\, d\nu_g\\ &+\int_M \[F'(W_1+\phi_{1, \e})-F'(W_1)\phi_{1, \e}\]\, d\nu_g \\
&=O\(\|\phi_{1, \e}\|^2\)=o\(\eps^2\).
\end{aligned}
\end{equation*}
Now, we shall prove that
 $$\int_M a_\ell\, d\nu_g =o\(\eps^{\theta_\ell}\)\ \hbox{for any}\ \ell=2,\dots,k.$$
 Let $\ell$ be fixed and let us split 
 $M=\displaystyle{\cup_{h=0}^\ell} \mathcal A_h$ where we agree that $A_0:=M \setminus B_\xi(r_0)$ and the annuli $\mathcal A_1,\dots,\mathcal A_\ell$ are defined in \eqref{anelli}. Then it is clear that
 $$\int_M a_\ell\, d\nu_g=
\sum _{h=0}^\ell \int_{\mathcal A_h}a_\ell\, d\nu_g.$$
\begin{itemize}
\item[$\diamond$]
First, let us consider the case $h=\ell$. By Lemma \ref{yyl} we get
\begin{equation*}
\begin{aligned}
\int_{\mathcal A_\ell}a_\ell\, d\nu_g&:=-\int_{\mathcal A_\ell}\[ F\(\sum_{j=1}^\ell ( W_j +\phi_{j, \e})\)-F\(\sum_{j=1}^\ell  W_j +\sum_{j=1}^{\ell-1}\phi_{j, \e} \)-f\(\sum_{j=1}^\ell  W_j +\sum_{j=1}^{\ell-1}\phi_{j, \e}\)\phi_{\ell, \eps}\]\, d\nu_g\\
&-\int_{\mathcal A_\ell}\[F\(\sum_{j=1}^\ell  W_j +\sum_{j=1}^{\ell-1}\phi_{j, \e}\)-F\(\sum_{j=1}^\ell  W_j\)- f\(\sum_{j=1}^\ell  W_j\) \sum_{j=1}^{\ell-1}\phi_{j, \e}\]\, d\nu_g\\
&+\int_{\mathcal A_\ell}\[F\(\sum_{j=1}^{\ell-1}( W_j+\phi_{j, \e})\)-F\(\sum_{j=1}^{\ell-1} W_j\)-f\(\sum_{j=1}^{\ell-1} W_j\)\sum_{j=1}^{\ell-1}\phi_{j, \e}\]\, d\nu_g\\
&+\int_{\mathcal A_\ell}\[f\(\sum_{j=1}^\ell( W_j+\phi_{j, \e})\)-f\(\sum_{j=1}^\ell  W_j +\sum_{j=1}^{\ell-1}\phi_{j, \e}\)\]\phi_{\ell, \eps}\, d\nu_g\\
&-\int_{\mathcal A_\ell}\[f\(\sum_{j=1}^\ell  W_j\)-f\(\sum_{j=1}^{\ell-1} W_j\)-f( W_\ell )\]\sum_{j=1}^{\ell-1}\phi_{j, \e}\, d\nu_g\\
&=O\( \underbrace{\int_{\mathcal A_\ell}\phi_{\ell, \eps}^{p+1}\, d\nu_g}_{=O\( \|\phi_{\ell, \eps}\|^{p+1}\)}\) + O\(\underbrace{\int_{\mathcal A_\ell} \(\sum_{j=1}^\ell  W_j +\sum_{j=1}^{\ell-1}\phi_{j, \e}\)^{p-1}\phi_{\ell, \eps}^2\, d\nu_g}_{=\ O\( \|\phi_{\ell, \eps}\|^2\)}\)\\ &
+\underbrace{O\(\int_{\mathcal A_\ell}\(\sum_{j=1}^{\ell-1}\phi_{j, \e}\)^{p+1}\, d\nu_g\)}_{(I)}\\
&+\underbrace{O\(\int_{\mathcal A_\ell}\(\sum_{j=1}^\ell  W_j\)^{p-1}\(\sum_{j=1}^{\ell-1}\phi_{j, \e}\)^2\, d\nu_g\)}_{(II)}\\
&+\underbrace{O\(\int_{\mathcal A_\ell}\(\sum_{j=1}^{\ell-1}  W_j\)^p \sum_{j=1}^{\ell-1}\phi_{j, \e}\, d\nu_g\)}_{(III)} + \underbrace{O\(\int_{\mathcal A_\ell}\(\sum_{j=1}^{\ell-1}  W_j\)  W_\ell ^{p-1}\sum_{j=1}^{\ell-1}\phi_{j, \e}\, d\nu_g\)}_{(IV)}
\\ &=o\(\e^{\theta_\ell}\),
\end{aligned}
\end{equation*}
because of the rate of the error term $\phi_{\ell,\eps}$ given in \eqref{stimaphi1} and the following four new estimates.\\
\begin{itemize}
\item[$\diamond\diamond$]
For any $j=1,\dots,\ell-1,$ by the pointwise estimate of $\phi_{j, \eps}$ in \eqref{stimalinfty} we get
\begin{equation}\label{ok6}
|(I)| \le c\int_{\mathcal A_\ell}|\phi_{j, \e}^{p+1}|\, d\nu_g \le c \mu_j^{-N} \(\textrm{meas}\ \mathcal A_\ell\)^{N} \le c \mu_j ^{-N}(\mu_\ell\mu_{\ell-1})^{\frac N 2} \le c\(\frac{\mu_\ell}{\mu_{\ell-1}}\)^{\frac N 2}.
\end{equation}
\item[$\diamond\diamond$]
If $j=1,\dots,\ell-1$ and $m=1,\dots,\ell-1$ we get by \eqref{ok3} and \eqref{ok6}
\begin{equation*}
\begin{aligned}
|(II)|&\le c\int_{\mathcal A_\ell} | W_j^{p-1}\phi_{m, \eps}^2|\, d\nu_g\le c | W_j|_{{2N\over N-2}, \mathcal A_\ell}^{p-1}|\phi_{m, \eps}|^2_{{2N\over N-2}, \mathcal A_\ell} \\
&\le   c \(\frac{\sqrt{\mu_\ell\mu_{\ell-1}}}{\mu_j}\right)^2 \mu_m^{-N+2}\(\mu_\ell\mu_{\ell-1}\)^{\frac{N-2}{2}}\le c\(\frac{\mu_\ell}{\mu_{\ell-1}}\)^{\frac N 2} ,
\end{aligned}
\end{equation*}
while for $j=\ell$  and $m=1,\dots,\ell-1$, by the pointwise estimate of $\phi_{m, \eps}$ in \eqref{stimalinfty} we get
\begin{equation*}
\begin{aligned}
|(II)|&\le c\int_{\mathcal A_\ell} | W_\ell ^{p-1}\phi_{m, \eps}^2| \, d\nu_g\le c \mu_m^{-N+2}\mu_\ell^2
\int\limits_{\sqrt{\mu_\ell\mu_{\ell+1}}\le|x|\le\sqrt{\mu_{\ell-1}\mu_\ell}} { 1\over|x|^{4}}dx\\
& \le c \mu_m^{-N+2}\mu_\ell^2 (\mu_\ell\mu_{\ell-1})^{\frac{N-4}{2}} \le c\(\frac{\mu_\ell}{\mu_{\ell-1}}\)^{\frac N 2} .
\end{aligned}
\end{equation*}
\item[$\diamond\diamond$]
 If $j=1,\dots,\ell-1$ and $m=1,\dots,\ell-1$ by \eqref{ok3} and \eqref{ok6} we immediately get
\begin{equation*}
\begin{aligned}
|(III)|&\le \int_{\mathcal A_\ell}| W_j^p \phi_{m, \eps}|\, d\nu_g\le c | W_j|_{{2N\over N-2}, \mathcal A_\ell}^{p}|\phi_{m, \eps}|_{{2N\over N-2}, \mathcal A_\ell} \le c\(\frac{\mu_\ell}{\mu_{\ell-1}}\)^{\frac N 2} .
\end{aligned}
\end{equation*} \item[$\diamond\diamond$]
 If $j=1,\dots,\ell-1$ and $m=1,\dots,\ell-1$ by \eqref{ok4} and \eqref{ok6} we immediately get
\begin{equation*}
\begin{aligned}
|(IV)|&\le \int_{\mathcal A_\ell}| W_\ell ^{p-1} W_j \phi_{m, \eps}|\, d\nu_g\le c | |W_\ell| ^{p-1} W_j|_{\frac{2N}{N+2}, \mathcal A_\ell}|\phi_{m, \eps}|_{\frac{2N}{N-2}, \mathcal A_\ell}\le c\(\frac{\mu_\ell}{\mu_{\ell-1}}\)^{\frac N 2} .
\end{aligned}
\end{equation*}
\end{itemize}
\item[$\diamond$]
Now, let us consider the case   $h=0, \ldots, \ell-1$. By Lemma \ref{yyl} we get
\begin{equation}\label{ok1000}
\begin{aligned}
\int_{\mathcal A_h} a_\ell\, d\nu_g &:=-\int_{\mathcal A_h}\[ F\(\sum_{j=1}^\ell ( W_j +\phi_{j, \e})\)-F\(\sum_{j=1}^\ell  W_j +\sum_{j=1}^{\ell-1}\phi_{j, \e} \)-f\(\sum_{j=1}^\ell  W_j +\sum_{j=1}^{\ell-1}\phi_{j, \e}\)\phi_{\ell, \eps}\]\\
&-\int_{\mathcal A_h}\[F\(\sum_{j=1}^\ell  W_j +\sum_{j=1}^{\ell-1}\phi_{j, \e}\)-F\(\sum_{j=1}^{\ell-1}(  W_j+\phi_{j, \e})\)- f\(\sum_{j=1}^{\ell-1} ( W_j+\phi_{j, \e})\) W_\ell \]\\
&+\int_{\mathcal A_h}\[F\(\sum_{j=1}^{\ell} W_j\)-F\(\sum_{j=1}^{\ell-1} W_j\)-f\(\sum_{j=1}^{\ell-1} W_j\) W_\ell \]\\
&+\int_{\mathcal A_h}\[f\(\sum_{j=1}^\ell( W_j+\phi_{j, \e})\)-f\(\sum_{j=1}^\ell  W_j +\sum_{j=1}^{\ell-1}\phi_{j, \e}\)\]\phi_{\ell, \eps}+\int_{\mathcal A_h} f( W_\ell )\sum_{j=1}^{\ell-1}\phi_{j, \e} \\
& -\int_{\mathcal A_h} f\(\sum_{j=1}^{\ell-1}( W_j+\phi_{j, \e})\) W_\ell  +\int_{\mathcal A_h} f\(\sum_{j=1}^{\ell-1}  W_j\)  W_\ell \\
&=O\( \underbrace{\int_{\mathcal A_h}\phi_{\ell, \eps}^{p+1}}_{=\ O\( \|\phi_{\ell, \eps}\|^{p+1}\)}\) + O\(\underbrace{\int_{\mathcal A_h} \(\sum_{j=1}^\ell  W_j +\sum_{j=1}^{\ell-1}\phi_{j, \e}\)^{p-1}\phi_{\ell, \eps}^2}_{=\ O\( \|\phi_{\ell, \eps}\|^2\)}\)\\ &
+O\(\underbrace{\int_{\mathcal A_h} W_\ell ^{p+1}}_{(I')}\)+O\(\underbrace{\int_{\mathcal A_h}\(\sum_{j=1}^{\ell-1}(  W_j+\phi_{j, \e})\)^{p-1} W_\ell ^2}_{(II')}\)\\
&+O\(\underbrace{\int_{\mathcal A_h}  W_\ell ^p\(\sum_{j=1}^{\ell-1} \phi_{j, \e}\)}_{(III')} \)\\
&+O\(\underbrace{-\int_{\mathcal A_h} f\(\sum_{j=1}^{\ell-1}( W_j+\phi_{j, \e})\) W_\ell  +\int_{\mathcal A_h} f\(\sum_{j=1}^{\ell-1}  W_j\)  W_\ell }_{(IV')}\) \\
&=o(\e^{\theta_\ell}),
\end{aligned}
\end{equation}
because of the rate of the error term $\phi_{\ell,\eps}$ given in \eqref{stimaphi1} and the following four new estimates.\\
\begin{itemize}
\item[$\diamond\diamond$] 
If $h=1,\dots,\ell-1$ by \eqref{ok1} we immediately get
$$|(I')|\le   c \(\frac{\mu_\ell}{\mu_{\ell-1}}\)^{\frac N 2}.$$
\item[$\diamond$] 
If $h=1,\dots,\ell-1$ and $j=1,\dots,\ell-1$ by \eqref{ok1}, \eqref{ok2} and \eqref{ok5}  we immediately get
\begin{equation*}\begin{aligned}
|(II')|& \le c \int_{\mathcal A_h }|  W_j^{p-1}W_{\ell}^2|\, d\nu_g +c\int_{\mathcal A_h}|\phi_{j, \e}^{p-1} W_\ell ^2|\, d\nu_g \\
&\le c||W_j|^{p-1}W_{\ell}|_{{2N\over N+2},\mathcal A_h}| W_{\ell}|_{{2N\over N-2},\mathcal A_h}+c
||\phi_{j,\eps}|^{p-1}W_{\ell}|_{{2N\over N+2},\mathcal A_h}| W_{\ell}|_{{2N\over N-2},\mathcal A_h}\\ &
 \le c   \(\(\frac{\mu_\ell}{\mu_{\ell-1}}\)^{\frac{N }{2}}\) .
\end{aligned}
\end{equation*}
\item[$\diamond\diamond$] 
If $h=1,\dots,\ell-1$ and $j=1,\dots,\ell-1$ we have to estimate the term $(III') ,$ by distinguish three cases, namely 
$j\le h-1$, $j=h$ and $j\ge h+1.$\\

If $j\ge h+1,$ by \eqref{ok1}, by \eqref{muj} and \eqref{stimaphi1} we get
\begin{equation*}
\begin{aligned}
\left|\int_{\mathcal A_h} W_\ell^p \phi_{j, \eps}\, d\nu_g\right|& \le  c \| \phi_{j, \eps}\|| W_{\ell}|^p_{{2N\over N-2},\mathcal A_h}
\le c\eps^{\frac p2\theta_j}{\mu_\ell^{N+2\over2}\over\(\mu_h\mu_{h+1}\)^{N+2\over4}}\\
&\le c\eps^{\frac p2\theta_ {h+1}}\eps^{\gamma_\ell {N+2\over2}-(\gamma_h+\gamma_{h+1} ) {N+2\over4}}=O\(\e^{p\theta_\ell}\)
\end{aligned}
\end{equation*}
because  $h\le \ell-1$ and so
 $${\frac p 2 \theta_{h+1}+\frac{N+2}{2}\gamma_\ell -\frac{N+2}{4}\gamma_h-\frac{N+2}{4}\gamma_{h+1}=\frac{N+2}{2}(\gamma_\ell-\gamma_h)}\ge \frac{N+2}{2}(\gamma_\ell-\gamma_{\ell-1}).
 $$ 
If $j\le h-1$, since $\mathcal A_h \subset B_\xi (\mu_j)$ we can use the pointwise estimate \eqref{stimalinfty} for $\phi_{j, \eps}$ and so
\begin{equation}\label{ok110}
\begin{aligned}
\left|\int_{\mathcal A_h} W_\ell^p \phi_{j, \eps}\, d\nu_g\right|& \le c
\mu_j^{-\frac{N-2}{2}}\int_{\mathcal A_h} | W_\ell |^p\, d\nu_g\\ &\le c \mu_j^{-\frac{N-2}{2}}\mu_\ell^{\frac{N-2}{2}}\int\limits_{ {\sqrt{\mu_h\mu_{h+1}}\over \mu_\ell}\le|y|\le {\sqrt{\mu_h\mu_{h-1}}\over \mu_\ell}}\frac{1}{(1+|y|^2)^{\frac{N+2}{2}}}dy\\
& \le  c \mu_{h-1}^{-\frac{N-2}{2}}\mu_\ell^{\frac{N-2}{2}}\frac{\mu_\ell^2}{\mu_h\mu_{h+1}}\\
& \le c \(\frac{\mu_\ell}{\mu_{\ell-1}}\)^{\frac N 2} \(\frac{\mu_{\ell-1}}{\mu_{\ell-2}}\)^{\frac{N-2}{2}}.
\end{aligned}
\end{equation}
 If $j=h $ we split the annulus 
$$\mathcal A_{h}=\underbrace{\{\sqrt{\mu_{h}\mu_{h+1}}\le d_g(x,\xi)\le \mu_{h} \}}_{\mathcal A'_{h}}\cup
\underbrace{\{\mu_{h}  \le d_g(x,\xi)\le  \sqrt{\mu_{h}\mu_{h-1}}\}}_{\mathcal A''_{h}}$$
and we get combining the previous estimates
 \begin{equation*}
 \begin{aligned}
 \int_{\mathcal A_h} W_\ell^p \phi_{h, \eps}\, d\nu_g&=\int_{ { \mathcal A}'_h} W_\ell^p \phi_{h, \eps}\, d\nu_g+\int_{ { \mathcal A}''_h} W_\ell^p \phi_{h, \eps}\, d\nu_g
 \\ &\le c
\mu_h^{-\frac{N-2}{2}}\int_{\mathcal A'_h} | W_\ell |^p\, d\nu_g+ c \| \phi_{h, \eps}\|| W_{\ell}|^p_{{2N\over N-2},\mathcal A''_h}\\
&\le c \mu_h^{-\frac{N-2}{2}}\mu_\ell^{\frac{N-2}{2}}\frac{\mu_\ell^2}{\mu_h\mu_{h+1}}+  c\eps^{\frac p2\theta_h}\({\mu_\ell \over \mu_h} \)^{N+2\over2}\\ &\le c\({\mu_\ell\over\mu_{\ell-1}}\)^{N\over2}.
 \end{aligned}
\end{equation*}
\item[$\diamond\diamond$] 
We need to estimate the last term $(IV').$ We have to distinguish two cases  $h=0, \ldots, \ell-2$ and $h=\ell-1.$
By Lemma \ref{yyl} we deduce that if $h=0, \ldots, \ell-2$ then
\begin{equation}\label{ok2000}
|(IV')|\le c \sum\limits_{j=1}^{\ell-1}\underbrace{ \int_{\mathcal A_h} | W_j^p  W_\ell |\, d\nu_g}_{(i)} +c\sum\limits_{j=1}^{\ell-1}  \underbrace{\int_{\mathcal A_h}||\phi_{j, \e}|^p  W_\ell |\, d\nu_g }_{(ii)}
\end{equation}
while if $h=\ell-1$ we get
$$
|(IV')|\le c \sum_{j,m=1}^{\ell-1}\underbrace{\int_{\mathcal A_{\ell-1}} \left| |W_j|^{p-1} \phi_{m, \e}   W_\ell \right|\, d\nu_g}_{(iii)}  + \sum_{j=1}^{\ell-1}\underbrace{\int_{\mathcal A_{\ell-1}}\left||\phi_{j, \e}|^p  W_\ell \right|\, d\nu_g }_{(ii)}.
$$
\begin{itemize}

\item[$\diamond\diamond\diamond$]  We estimate $(i)$.\\

Let $h=1,\dots,\ell-2.$ If $j=h$
\begin{equation*}
\begin{aligned} \int_{\mathcal A_h} | W_j^p  W_\ell |\, d\nu_g&\le c \int_{B\(\frac{\sqrt{\mu_h\mu_{h-1}}}{\mu_h}\)\setminus B\(\frac{\sqrt{\mu_h\mu_{h+1}}}{\mu_h}\)}\frac{\mu_h^{\frac {N- 2}2}}{(1+|y|^2)^{\frac{N+2}{2}}}\frac{\mu_\ell^{\frac{N-2}{2}}}{(\mu_\ell^2+\mu_h^2|y|^2)^{\frac{N-2}{2}}}\, dy\\ &\le c\(\frac{\mu_\ell}{\mu_h}\)^{\frac{N-2}{2}}=o\(\e^{\theta_\ell}\),
\end{aligned}
\end{equation*}

if $j\ge h+1$
\begin{equation*}
\begin{aligned}
 \int_{\mathcal A_h} | W_j^p  W_\ell |\, d\nu_g&\le \int_{B\(\frac{\sqrt{\mu_h\mu_{h-1}}}{\mu_j}\)\setminus B\(\frac{\sqrt{\mu_h\mu_{h+1}}}{\mu_j}\)}\frac{\mu_j^{\frac N 2 -1}}{(1+|y|^2)^{\frac{N+2}{2}}}\frac{\mu_\ell^{\frac{N-2}{2}}}{(\mu_\ell^2+\mu_j^2|y|^2)^{\frac{N-2}{2}}}\, dy\\
&\le c \mu_j^{-\frac{N-2}{2}}\mu_\ell^{\frac{N-2}{2}}\int_{\frac{\sqrt{\mu_h\mu_{h+1}}}{\mu_j}}^{\frac{\sqrt{\mu_h\mu_{h-1}}}{\mu_j}}\frac{r^{N-1}}{(1+r^2)^{\frac{N+2}{2}}r^{N-2}}\, dr\\
&\le c \frac{\mu_\ell^{\frac{N-2}{2}}\mu_j^{\frac{N+2}{2}}}{(\mu_h\mu_{h+1})^{\frac{N}{2}}}\\ &\le c\(\frac{\mu_\ell}{\mu_{h}}\)^{\frac{ N -2}{2}}\(\frac{\mu_{h+1}}{\mu_{h}}\) = o(\e^{\theta_\ell})
\end{aligned}
\end{equation*}
and if $j\le h-1$
\begin{equation*}
\begin{aligned}
 \int_{\mathcal A_h} | W_j^p  W_\ell |\, d\nu_g&\le \int_{B\(\frac{\sqrt{\mu_h\mu_{h-1}}}{\mu_j}\)\setminus B\(\frac{\sqrt{\mu_h\mu_{h+1}}}{\mu_j}\)}\frac{\mu_j^{\frac N 2 -1}}{(1+|y|^2)^{\frac{N+2}{2}}}\frac{\mu_\ell^{\frac{N-2}{2}}}{(\mu_\ell^2+\mu_j^2|y|^2)^{\frac{N-2}{2}}}\, dy\\
&\le c\mu_j^{-\frac{N-2}{2}}\mu_\ell^{\frac{N-2}{2}}\int_{\frac{\sqrt{\mu_h\mu_{h+1}}}{\mu_j}}^{\frac{\sqrt{\mu_h\mu_{h-1}}}{\mu_j}}\frac{r^{N-1}}{(1+r^2)^{\frac{N+2}{2}}r^{N-2}}\, dr\\
&\le c \mu_j^{-\frac{N-2}{2}}\mu_\ell^{\frac{N-2}{2}}\int_{\frac{\sqrt{\mu_h\mu_{h+1}}}{\mu_j}}^{\frac{\sqrt{\mu_h\mu_{h-1}}}{\mu_j}}r\, dr\\
&\le c \mu_j^{-\frac N 2}\mu_\ell^{\frac{N-2}{2}}\mu_h \mu_{h-1}\le c \(\frac{\mu_\ell}{\mu_{\ell-1}}\)^{\frac{ N -2}{2}}  \(\frac{\mu_{h}}{\mu_{j}}\)\mu_{h-1}.\end{aligned}
\end{equation*}

\item[$\diamond\diamond\diamond$]  We estimate $(ii)$.\\

We have to distinguish some cases. Let $h=1,\dots,\ell-1.$\\

If $j\le h-1$ then by  the pointwise estimate \eqref{stimalinfty} for $\phi_{j, \eps}$ in $\mathcal A_h \subset B_\xi (\mu_j)$   we
get
\begin{equation*}
\begin{aligned}\int_{\mathcal A_{h}}\left||\phi_{j, \e}|^p  W_\ell \right|\, d\nu_g&\le c{\mu_\ell^{N-2\over2}\over \mu_j^{N+2\over2}}\int\limits
_{\sqrt{\mu_h\mu_{h+1}}\le |x|\le \sqrt{\mu_{h-1}\mu_{h }}}{1\over|x|^{N-2}}dx\le c {\mu_\ell^{N-2\over2}\over \mu_j^{N+2\over2}}(\mu_{h-1}\mu_{h })\\ &
\le c\({\mu_\ell\over\mu_{h-1}}\)^{N-2\over2}{\mu_h\over\mu_{h-1}}\le c\({\mu_\ell\over\mu_{\ell-1}}\)^{N-2\over2}\({\mu_{\ell-1}\over\mu_{h-1}}\)^{N-2\over2}.
\end{aligned}
\end{equation*}
If $j\ge h+1$ by \eqref{stimaphi1} and \eqref{ok1} we get
\begin{equation*}
\begin{aligned}\int_{\mathcal A_{h}}\left||\phi_{j, \e}|^p  W_\ell \right|\, d\nu_g&\le \|\phi_{j, \e}\|^p|W_\ell|_{{2N\over N-2},\mathcal A_h}\le c\e^{\frac{p^2}2\theta _j}
{\mu_\ell^{N-2\over2}\over(\mu_h\mu_{h-1})^{N-2\over4}}  \\ & \le
c\e^{\frac{p^2}2\theta _{h+1}}
{\mu_\ell^{N-2\over2}\over(\mu_h\mu_{h-1})^{N-2\over4}}=o\(\e^{\theta_\ell}\)\end{aligned}
\end{equation*}
because by the choice of $\mu_j$ in \eqref{muj} and the definition of $\theta_{h+1}$ in \eqref{tetaelle} we get
$$\frac{p^2}2\theta _{h+1}+\gamma_\ell{N-2\over2}-\(\gamma_h+\gamma_{h+1}\) {N-2\over4}>\(\gamma_\ell-\gamma_{\ell-1}\) {N-2\over2},$$
since $h+1\le j\le \ell-1.$

If $j=h\le\ell-1$   we split the annulus 
$$\mathcal A_{h}=\underbrace{\{\sqrt{\mu_{h}\mu_{h+1}}\le d_g(x,\xi)\le \mu_{h} \}}_{\mathcal A'_{h}}\cup
\underbrace{\{\mu_{h}  \le d_g(x,\xi)\le  \sqrt{\mu_{h}\mu_{h-1}}\}}_{\mathcal A''_{h}}$$
and we get combining the previous estimates
\begin{equation*}
\begin{aligned}\int_{\mathcal A_{h}}\left||\phi_{h, \e}|^p  W_\ell \right|\, d\nu_g&=\int_{\mathcal A'_{h}}\left||\phi_{h, \e}|^p  W_\ell \right|\, d\nu_g+\int_{\mathcal A''_{h}}\left||\phi_{h, \e}|^p  W_\ell \right|\, d\nu_g\\ &\le
 c {\mu_\ell^{N-2\over2}\over \mu_h^{N+2\over2}}\mu_h^2+c\e^{\frac{p^2}2\theta _{h }}
{\mu_\ell^{N-2\over2}\over \mu_h ^{N-2\over2}}\\
&=o\(\e^{\theta_\ell}\)\ \hbox{if}\ h\le \ell-2.
\end{aligned}
\end{equation*}
If $h=\ell-1$ we need to change the estimate of  the term 
\begin{equation*}
\begin{aligned}\int_{\mathcal A'_{\ell-1}}\left||\phi_{\ell-1, \e}|^p  W_\ell \right|\, d\nu_g& \le c{1\over\mu_{\ell-1}^2}\|\phi_{\ell-1, \e}\| |W_\ell|_{{2N\over N+2},\mathcal A'_{\ell-1}}
\\ &\le c{1\over\mu_{\ell-1}^2}\e^{\frac p2\theta_{\ell-1}}\mu_\ell^2\({\mu_\ell\over\mu_{\ell-1}}\)^{N-6\over4}=o\(\e^{\theta_\ell}\)
\end{aligned}
\end{equation*}
because by the definition of $\theta_\ell$ in \eqref{tetaelle} we get
\begin{equation}\label{ok8} \frac p2\theta_\ell+\frac p2\theta_{\ell-1}>\theta_\ell.\end{equation}

\item[$\diamond\diamond\diamond$]  We estimate $(iii)$.\\

If $m\le \ell-2$
\begin{equation*}
\begin{aligned}
  \int_{\mathcal A_{\ell-1}} | W_j^{p-1}\phi_{m, \eps}  W_\ell |\, d\nu_g &\le c \mu_m^{-\frac{N-2}{2}}\int_{ \mathcal A_{\ell-1}} | W_j^{p-1} W_\ell |\, d\nu_g\\
& \le c\mu_m^{-\frac{N-2}{2}}\mu_\ell^{\frac{N-2}{2}}\mu_j^{N-2} \int_{B\(\frac{\sqrt{\mu_{\ell-1}\mu_{\ell-2}}}{\mu_j}\)\setminus B\(\frac{\sqrt{\mu_\ell\mu_{\ell-1}}}{\mu_j}\)}\frac{1}{(1+|y|^2)^2}\frac{1}{(\mu_\ell^2+\mu_j^2|y|^2)^{\frac{N-2}{2}}}\, dy\\
&\le \mu_m^{-\frac{N-2}{2}}\mu_\ell^{\frac{N-2}{2}}\mu_j^{N-2} \int_{B\(\frac{\sqrt{\mu_{\ell-1}\mu_{\ell-2}}}{\mu_j}\)\setminus B\(\frac{\sqrt{\mu_\ell\mu_{\ell-1}}}{\mu_j}\)}\frac{1}{(1+|y|^2)^2}\frac{1}{\mu_j^{N-2}|y|^{N-2}}\, dy\\
& \le c \mu_m^{-\frac{N-2}{2}}\mu_\ell^{\frac{N-2}{2}}\int_{\frac{\sqrt{\mu_\ell\mu_{\ell-1}}}{\mu_j}}^{\frac{\sqrt{\mu_{\ell-2}\mu_{\ell-1}}}{\mu_j}} r\, dr\\
& \le c \mu_m^{-\frac{N-2}{2}}\mu_\ell^{\frac{N-2}{2}}\frac{\mu_{\ell-2}\mu_{\ell-1}}{\mu_j^2}\\
&\le c \(\frac{\mu_\ell}{\mu_{\ell-1}}\)^{\frac{ N -2}{2}}\(\frac{\mu_{\ell-1}}{\mu_{\ell-2}}\)^{\frac{N-4}{2}}.\end{aligned}
\end{equation*}

If $m=\ell-1$   we split the annulus 
$$\mathcal A_{\ell-1}=\underbrace{\{\sqrt{\mu_{\ell }\mu_{\ell-1}}\le d_g(x,\xi)\le \mu_{\ell-1} \}}_{\mathcal A'_{\ell-1}}\cup
\underbrace{\{\mu_{\ell-1}  \le d_g(x,\xi)\le  \sqrt{\mu_{\ell-1}\mu_{\ell-2}}\}}_{\mathcal A''_{\ell-1}}$$
and so
$$ \int_{\mathcal A_{\ell-1}} | W_j^{p-1}\phi_{\ell-1, \eps}  W_\ell |\, d\nu_g= \int_{\mathcal A'_{\ell-1}} | W_j^{p-1}\phi_{\ell-1, \eps}  W_\ell |\, d\nu_g+ \int_{\mathcal A''_{\ell-1}} | W_j^{p-1}\phi_{\ell-1, \eps}  W_\ell |\, d\nu_g.$$

If $j\le\ell-2$ then arguing as before
\begin{equation*}
\begin{aligned}
  \int_{\mathcal A'_{\ell-1}} | W_j^{p-1}\phi_{\ell-1, \eps}  W_\ell |\, d\nu_g   \le c  \mu_{\ell-1}^{-\frac{N-2}{2}}\mu_\ell^{\frac{N-2}{2}}\frac{ \mu^2_{\ell-1}}{\mu_j^2}\le c \(\frac{\mu_\ell}{\mu_{\ell-1}}\)^{\frac{ N -2}{2}}\(\frac{\mu_{\ell-1}}{\mu_{\ell-2}}\)^{2}=o\(\e^{\theta_\ell}\).\end{aligned}
\end{equation*}

If $j=\ell-1$ then 
\begin{equation*}
\begin{aligned}
  \int_{\mathcal A'_{\ell-1}} | W_{\ell-1, \eps}^{p-1}\phi_{\ell-1, \eps}  W_\ell |\, d\nu_g    \le c{1\over\mu_{\ell-1}^2}\e^{\frac p2\theta_{\ell-1}}\mu_\ell^2\({\mu_\ell\over\mu_{\ell-1}}\)^{N-6\over4}=o\(\e^{\theta_\ell}\)
  ,\end{aligned}
\end{equation*}
because of \eqref{ok8}.
If $j\le\ell-1$ by \eqref{ok2} and \eqref{stimaphi1} we get
\begin{equation*}
\begin{aligned}
\int_{\mathcal A''_{\ell-1}} | |W_j|^{p-1}\phi_{\ell-1, \eps}  W_\ell |\, d\nu_g& \le c \|\phi_{\ell-1, \e}\| | |W_j|^{p-1}W_\ell|_{{2N\over N+2},\mathcal A''_{\ell-1}}
\le c \e^{\frac p2\theta_{\ell-1}}  \({\mu_\ell\over\mu_j}\)^{2}\({\mu_\ell\over\mu_{\ell-1}}\)^{N-6\over4}\\ & \le c \e^{\frac p2\theta_{\ell-1}} \({\mu_\ell\over\mu_{\ell-1}}\)^{N+2\over4}=o\(\e^{\theta_\ell}\),
\end{aligned}
\end{equation*}
because of \eqref{ok8}.

\end{itemize}

\end{itemize}

\end{itemize}

That concludes the proof.

{\it Step 2.} We shall write the expansion of $J_\e \(\sum_{j=1}^k  W_j\).$ We   will   split the manifold 
 $M=\displaystyle{\cup_{h=0}^k} \mathcal A_h$ where the annuli $\mathcal A_h$ are defined in \eqref{anelli}.
We have
\begin{equation*}
\begin{aligned}
J_\e \(\sum_{j=1}^k  W_j\)&= \sum_{j=1}^k J_\e( W_j)+\sum_{i,j=1\atop i<j}^k\int_M \(\nabla_g W_i \nabla_g  W_j +(\beta_N \R _g +\e)W_i  W_j \)\\
&-\int_M \[F\(\sum_{j=1}^k  W_j\)-\sum_{j=1}^k F( W_j)\]\\
&= \sum_{j=1}^k J_\e( W_j)+\sum_{i,j=1\atop i<j}^k\int_M \(-\mathcal L_g (W_i)+\e W_i -f(W_i)\)  W_j\, d\nu_g \\
& -\int_M \[F\(\sum_{j=1}^k  W_j\)-\sum_{j=1}^k F( W_j)-\sum_{i,j=1\atop i<j}^k f(W_i)  W_j\]\, d\nu_g \\
&= \sum_{j=1}^k \[J_\e( W_j)+\sum_{ i=1 }^{j-1}\int_M \(-\mathcal L_g (W_i)+\e W_i -f(W_i)\)  W_j\, d\nu_g\]\\
& -\sum_{\ell=2}^k\sum_{h=0}^\ell  \int_{\mathcal A_h}  \[F\(\sum_{j=1}^\ell  W_j\)-F\(\sum_{j=1}^{\ell-1} W_j\)-F( W_\ell )-\sum_{j=1}^{\ell-1}f( W_\ell ) W_j\] \, d\nu_g
\\ &
=\sum_{\ell=1}^k a_\ell\end{aligned}
\end{equation*}
where each $a_\ell$ only depends on $d_1,\dots,d_\ell$ and they are defined as 
\begin{equation*} a_1:=J_\e(W_1)\end{equation*}
and for any $\ell=2,\dots,k$ 
\begin{equation*}\begin{aligned} a_\ell:=&  J_\e( W_\ell)+\sum_{ i=1 }^{\ell-1}\int_M \(-\mathcal L_g (W_i)+\e W_i -f(W_i)\)  W_\ell\, d\nu_g \\ &- \sum_{h=0}^\ell  \int_{\mathcal A_h}  \[F\(\sum_{j=1}^\ell  W_j\)-F\(\sum_{j=1}^{\ell-1} W_j\)-F( W_\ell )-\sum_{j=1}^{\ell-1}f( W_\ell ) W_j\] \, d\nu_g. \end{aligned}
\end{equation*}
We shall prove that
\begin{equation}\label{a1} a_1=\frac{K_N^{-N}}{N} +\eps^2\(-A_N|{\rm Weyl}_g(\xi)|^2_g d_1^4+  B_N  d_1^2\) +  o(\e^2)
\end{equation}
and
\begin{equation}
\label{aelle}
a_\ell=\frac{K_N^{-N}}{N}+\eps^{\theta_\ell}\(-C_N\(\frac{d_\ell}{d_{\ell-1}}\)^{\frac{N-2}{2}} + B_N   d_\ell^2\)   +o\(\eps^{\theta_\ell}\),\quad \hbox{if}\ \ell=2,\dots,k.
\end{equation}
Now  in \cite{ep} it has been proved that 
$$J_\e( W_\ell):= \frac{K_N^{-N}}{N} -A_N|{\rm Weyl}_g(\xi)|^2_g \mu_\ell^4+\e B_N   \mu_\ell^2 +  O(\mu_\ell^5)$$
where $$A_N:= \frac{K_N^{-N}}{24 N (N-4)(N-6)};\qquad B_N:=\frac{2(N-1)K_N^{-N}}{N(N-2)(N-4)}$$ and $K_N:=\sqrt{4 \over N(N-2)\omega_N^{\frac{2}{N}}}$ is the sharp constant for the embedding of $D^{1, 2}(\mathbb R^N)$ into $L^{p+1}(\mathbb R^N)$.
By   \eqref{hhhh} we immediately get for any $i=1,\dots,\ell-1$
\begin{equation*}
\begin{aligned}\left|\int_M \(-\mathcal L_g (W_i)+\e W_i -f(W_i)\)  W_\ell\, d\nu_g\right|&=O\(\int_{B(0,r_0)}{\mu_i^{N-2\over2}\over \(\mu_i^2+|x|^2\)^{N-2\over2}}{\mu_\ell^{N-2\over2}\over \(\mu_\ell^2+|x|^2\)^{N-2\over2}}dx\) \\ & =O\(\int_{B(0,r_0)}{\mu_i^{N-2\over2}\over \(\mu_i^2+|x|^2\)^{N-2\over2}}{\mu_\ell^{N-2\over2}\over  |x| ^{N-2}}dx\) \\ &=O\(\(\frac{\mu_\ell}{\mu_i}\)^{N-2 \over 2}\mu_i^2\)=o\(\e^{\theta_\ell}\).\end{aligned}
\end{equation*}
Finally, it remains to estimate  for $h=1,\dots,\ell$
\begin{equation*}  
\begin{aligned}
 I_h:=\int_{\mathcal A_h}  \[F\(\sum_{j=1}^\ell  W_j\)-F\(\sum_{j=1}^{\ell-1} W_j\)-F( W_\ell )-\sum_{j=1}^{\ell-1}f( W_\ell ) W_j\] \, d\nu_g ,
\end{aligned}
\end{equation*}
 
If $h=\ell$ by Lemma \ref{yyl}
\begin{equation*}
\begin{aligned}
I_\ell &=  \int_{\mathcal A_\ell}\[F\(\sum_{j=1}^\ell  W_j\)-F( W_\ell )-f( W_\ell )\sum_{j=1}^{\ell-1} W_j\]\, d\nu_g-\int_{\mathcal A_\ell}F\(\sum_{j=1}^{\ell-1}  W_j\)\, d\nu_g \\
& =O\( \int_{\mathcal A_\ell}\(\sum_{j=1}^{\ell-1} W_j\)^{p+1}\, d\nu_g\)+O\(\int_{\mathcal A_\ell}\(\sum_{j=1}^{\ell-1} W_j\)^2  W_\ell ^{p-1}\, d\nu_g\) \\
&=O\(\sum_{j=1}^{\ell-1} |W_j|_{{2N\over N-2},\mathcal A_\ell}^{p+1}\)+O\(\sum_{j=1}^{\ell-1} |W_j|_{{2N\over N-2},\mathcal A_\ell} | W_\ell ^{p-1} W_j|_{{2N\over N+2},\mathcal A_\ell} \)
\\ &=o\(\e^{\theta_\ell}\),\ \hbox{because of \eqref{ok3} and \eqref{ok4}}
\end{aligned}
\end{equation*}
If $h=0, \ldots, \ell-1$ we get
\begin{equation*}
\begin{aligned}
I_h&= \underbrace{\int_{\mathcal A_h}\[F\(\sum_{j=1}^\ell  W_j\)-F\(\sum_{j=1}^{\ell-1} W_j\)-f\(\sum_{j=1}^{\ell-1}  W_j\) W_\ell \]\, d\nu_g - \int_{\mathcal A_h}F\(  W_\ell \)\, d\nu_g}_{(i)}\\
&\underbrace{- \int_{\mathcal A_h}\(\sum_{j=1}^{\ell-1} W_j\)f( W_\ell )\, d\nu_g}_{(ii)}+\underbrace{\int_{\mathcal A_h}f\(\sum_{j=1}^{\ell-1} W_j\) W_\ell \, d\nu_g}_{(iii)}.
\end{aligned}
\end{equation*}
Now,  $(i)=o\(\eps^{\theta_\ell}\)$  
  as in $(I')$ and $(II')$ in \eqref{ok1000}, $(ii)=o\(\eps^{\theta_\ell}\)$  as in $(III')$ in \eqref{ok1000} (see \eqref{ok110}) and 
$(iii)=o\(\eps^{\theta_\ell}\)$   when $h=1,\dots, \ell-2$  as in $(IV')$ in \eqref{ok2000}.\\\\
 It only remains to estimate $(iii)$ when $h=\ell-1,$ which  contains the leading term given by the interaction of two consecutive bubbles. Indeed
\begin{equation*}
\int_{\mathcal A_{\ell-1}}f\(\sum_{j=1}^{\ell-1} W_j\) W_\ell \, d\nu_g=\int_{\mathcal A_{\ell-1}}\[f\(\sum_{j=1}^{\ell-1} W_j\)-f(W_{\mu_{\ell-1} })\] W_\ell \, d\nu_g+\int_{\mathcal A_{\ell-1}}f(W_{\mu_{\ell-1}}) W_\ell d\nu_g,
\end{equation*}
where the first term  is estimated   as in $(IV')$ in \eqref{ok2000} when $h=\ell-1$ 
\begin{equation}
\begin{aligned}
&\left|\int_{\mathcal A_{\ell-1}}\[f\(\sum_{j=1}^{\ell-1} W_j\)-f(W_{{\ell-1}})\] W_\ell \, d\nu_g\right|\\ & =O\( \sum_{j=1}^{\ell-2}\int_{\mathcal A_{\ell-1}} |W_{\ell-1}^{p-1} W_j W_\ell |\, d\nu_g\)+ O\(\sum_{j=1}^{\ell-2}\int_{\mathcal A_{\ell-1}}| W_j^p W_\ell |\, d\nu_g\)\\ &  =o \(\e^{\theta_\ell} \).
\end{aligned}
\end{equation}
and  
the second term   is the leading term:
\begin{equation}
\begin{aligned}
 \int_{\mathcal A_{\ell-1}}f(W_{\mu_{\ell-1}}) W_\ell d\nu_g&= \(\frac{\mu_\ell}{\mu_{\ell-1}}\)^{\frac{N-2}{2}}\int_{\mathbb R^N}\frac{ U^p(y)}{|y|^{N-2}}\, dy (1+o(1))\\ &= \(\frac{\mu_\ell}{\mu_{\ell-1}}\)^{\frac{N-2}{2}}\underbrace{\frac{2^{N-1}K_N^{-N}\omega_{N-1}}{N \omega_N}}_{:= C_N} (1+o(1)).\end{aligned}
\end{equation}
That concludes the proof.

	\end{proof}

\end{document}